\theoremstyle{plain}
\newtheorem{thm}{\protect\theoremname}[section]
\theoremstyle{plain}
\newtheorem{prop}[thm]{\protect\propositionname}
\theoremstyle{definition}
\newtheorem{defn}[thm]{\protect\definitionname}
\theoremstyle{plain}
\newtheorem{cor}[thm]{\protect\corollaryname}
\theoremstyle{remark}
\newtheorem{rem}[thm]{\protect\remarkname}
\newtheorem{ex}[thm]{Example}
\theoremstyle{plain}
\newtheorem{lem}[thm]{\protect\lemmaname}
\providecommand{\corollaryname}{Corollary}
\providecommand{\definitionname}{Definition}
\providecommand{\lemmaname}{Lemma}
\providecommand{\propositionname}{Proposition}
\providecommand{\remarkname}{Remark}
\providecommand{\theoremname}{Theorem}
\numberwithin{equation}{section}
\numberwithin{figure}{section}
\numberwithin{table}{section}
\newcommand{\h}{\hat{h}}
\renewcommand{\L}{\mathcal{L}}
\newcommand{\R}{\mathbb{R}}
\newcommand{\N}{\mathbb{N}}
\newcommand{\C}{\mathbb{C}}
\renewcommand{\S}{\mathbb{S}}
\newcommand{\Z}{\mathbb{Z}}
\renewcommand{\Re}{\mathrm{Re}}
\newcommand{\supp}{\mathrm{supp}\,}
\newcommand{\Int}{\mathrm{Int}\,}
\newcommand{\spec}{\mathrm{Spec}}
\newcommand{\Id}{\mathrm{Id}}
\newcommand{\loc}{\mathrm{loc}}
\newcommand{\Lip}{\mathrm{Lip}}
\newcommand{\pt}{\partial_{t}}
\renewcommand{\d}{\mathrm{d}}
\newcommand{\dx}{\d x}
\newcommand{\dt}{\d t}
\newcommand{\ds}{\d s}
\newcommand{\dz}{\d z}
\begin{document}

\title{Stabilization of Damped Waves on Spheres
and Zoll Surfaces of Revolution}
\author{Hui Zhu}
\thanks{The original publication is available at www.esaim-cocv.org}

\begin{abstract}
We study the strong stabilization of wave equations on some sphere-like manifolds, with rough damping terms which do not satisfy the geometric control condition posed by Rauch-Taylor~\cite{R-T} and Bardos-Lebeau-Rauch~\cite{B-L-R}. We begin with an unpublished result of G.~Lebeau, which states that on~$ \S^d $, the indicator function of the upper hemisphere strongly stabilizes the damped wave equation, even though the equators, which are geodesics contained in the boundary of the upper hemisphere, do not enter the damping region. Then we extend this result on dimension~2, to Zoll surfaces of revolution, whose geometry is similar to that of~$ \S^2 $. In particular, geometric objects such as the equator, and the hemi-surfaces are well defined. Our result states that the indicator function of the upper hemi-surface strongly stabilizes the damped wave equation, even though the equator, as a geodesic, does not enter the upper hemi-surface either.
\end{abstract}

\maketitle

\smallskip
\noindent \textbf{Keywords:} Wave Equation, Semiclassical Analysis, Control Theory, Geodesic Flow.

\noindent \textbf{MSC Numbers:} 35L05, 81Q20, 35Q93, 53D25.

\section{Introduction}

\subsection{Problem of Stabilization and Main Result}

Consider the Cauchy problem of the damped wave equation on a compact Riemannian manifold $(M,g)$ without boundary.
\begin{equation}
\label{eq:Damped-Wave-Equation}
\begin{cases}
(\pt^{2}-\Delta+a\pt) u=0 & \mathrm{in\ }\mathcal{D}'(\R\times M),\\
(u,\pt u)_{t=0}=(u_{0},u_{1}) & \in H^{1}(M)\times L^{2}(M).
\end{cases}
\end{equation}
Here $\Delta=\Delta{g}$ is the Laplace-Beltrami operator with respect to the metric~$g$. The function~$a\in L^{\infty}(M)$ is non-negative, and $a\pt u$ is called the damping term, as it causes decay in energy (defined below). There is a unique solution $ u \in C^{1}(\R,L^{2}(M))\cap C(\R,H^{1}(M))$ to~\eqref{eq:Damped-Wave-Equation} by the theorem of Hille-Yosida. The energy defined by
\begin{equation}
\label{eq:def-energy}
E(u,T)=\frac{1}{2}\|{\nabla u(T)}\|_{L^{2}(M)}^{2}+\frac{1}{2}\|{\pt u(T)}\|_{L^{2}(M)}^{2}
\end{equation}
decays monotonically as $T$ increases, due to the non-negativity of $a$
and the identity
\begin{equation}
\label{eq:energy-identity}
E(u,T)=E(u,0)-\int_{0}^{T}\int_{M}a(x)|\pt u(t,x)|^{2}\,\dx\,\dt.
\end{equation}
A natural question to ask is whether, as a consequence of the damping effect, 
\begin{equation*}
\lim_{T\to+\infty} E(u,T) = 0 
\end{equation*}
for every solution~$ u $ to~\eqref{eq:Damped-Wave-Equation}. If this is true, we say that~$ a $ \textit{weakly stabilizes}~\eqref{eq:Damped-Wave-Equation}. When such a stabilization is uniform for all solutions, or more precisely, if for some function $f:\R_{\ge 0}\to\R_{\ge 0}$ with $\lim_{T\to+\infty} f(T)=0$ and every solution~$u$ to~\eqref{eq:Damped-Wave-Equation}, we have for all $ T \ge 0 $,
\begin{equation*}
E(u,T)\le E(u,0)\times f(T),
\end{equation*} 
then we say that~$ a $ \textit{strongly stabilizes}~\eqref{eq:Damped-Wave-Equation}. It is well known that whenever the strong stabilization holds, the function~$ f $ could be chosen of the form
\begin{equation*}
f(T) = C e^{-\beta T}, \quad C > 0,\ \beta > 0,
\end{equation*}
so that we have in fact a uniform exponential decay of energy (see for example~\cite{B-G} for an elementary proof).

When $a\in C(M)$, Rauch-Taylor gave in~\cite{R-T} a sufficient condition (a \emph{geometric control condition}, to be stated as condition (2) in the following Theorem~\ref{thm:GCC-and-Energy-Decay-of-DW}) for strong stabilization, followed by Bardos-Lebeau-Rauch~\cite{B-L-R}, who showed that this is in fact an equivalent condition (even for the similar problem of stabilization on manifolds with boundaries, which will not be elaborated here).
\begin{thm}[Bardos-Lebeau-Rauch]
\label{thm:GCC-and-Energy-Decay-of-DW}
Let $(M,g)$ be a compact Riemannian manifold without boundary, and $0\le a\in C(M)$, then the following two statements are equivalent.
\begin{enumerate}
\item $ a $ strongly stabilizes~\eqref{eq:Damped-Wave-Equation};
\item All geodesics of $M$ enter the open set $\{a>0\}$. That is, for $ x \in M $, let $\gamma$ be a geodesic starting from $x$ (i.e.~$\gamma(0)=x$), then for some $ t \ge 0 $, $\gamma(t)\in\{a>0\}$.
\end{enumerate}
\end{thm}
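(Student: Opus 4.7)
\emph{Proof proposal.} The plan is to establish the two implications separately. For the forward direction (1)$\Rightarrow$(2), I would argue by contraposition. Assuming some geodesic $\gamma:[0,T]\to M$ never enters the open set $\{a>0\}$, I would build a concentrating family of solutions $u_h$ of the free wave equation $(\pt^{2}-\Delta)u_h=0$ at scale $h\to 0$ (Gaussian beams, or a microlocalized WKB ansatz), microlocally supported near the cotangent lift of $\gamma$. Because $a$ is continuous and vanishes on the compact image of $\gamma$, one obtains $\int_{0}^{T}\!\!\int_{M} a|\pt u_h|^{2}\,\dx\,\dt \to 0$ while $E(u_h,0)$ stays bounded away from zero. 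Combined with the energy identity~\eqref{eq:energy-identity} and a Duhamel-type perturbation to reinstate the $a\pt u$ term, this forces $E(u_h,T)/E(u_h,0)\to 1$, contradicting any uniform rate $f(T)\to 0$ once $T$ is chosen with $f(T)<1$.

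For the substantial direction (2)$\Rightarrow$(1), I would use the classical reduction of strong (exponential) stabilization to the observability inequality for the undamped equation: there exist $T,C>0$ such that for every solution $v$ of $(\pt^{2}-\Delta)v=0$,
\[
E(v,0)\le C\int_{0}^{T}\!\!\int_{M} a(x)|\pt v(t,x)|^{2}\,\dx\,\dt.
\]
I would prove this observability by contradiction. Extracting a sequence $v_n$ with $E(v_n,0)=1$ and $\int_{0}^{T}\!\!\int_{M} a|\pt v_n|^{2}\,\dx\,\dt\to 0$, I would apply the theory of microlocal defect measures of Gérard and Tartar. After suitable normalization, one associates to the pair $(\pt v_n,\nabla v_n)$ a positive Radon measure $\mu$ on $S^{*}(\R\times M)$ such that: $\supp\mu$ lies in the characteristic set $\{\tau^{2}=|\xi|_g^{2}\}$ (from the wave equation); $\mu$ is invariant under the Hamiltonian flow of $\tau^{2}-|\xi|_g^{2}$, which on the characteristic variety is the lifted geodesic flow (this comes from a commutator/Egorov argument against order-zero pseudodifferential observables); and $\mu\equiv 0$ over the region lying above $\{a>0\}$, because $a$ is continuous and the observation integral vanishes in the limit.

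The hypothesis (2) then closes the argument: every geodesic enters $\{a>0\}$, so flow-invariance together with the vanishing over $\{a>0\}$ force $\mu\equiv 0$ on all of $S^{*}(\R\times M)$. This yields $(\pt v_n,\nabla v_n)\to 0$ strongly in $L^{2}_{\loc}$ modulo its low-frequency part, which lives in a bounded subset of a finite-dimensional spectral band. The residual finite-dimensional piece is eliminated by a unique continuation argument of Holmgren or Carleman type applied to the limiting wave solution: a wave whose time derivative vanishes on $\{a>0\}\times[0,T]$ must itself be zero when $T$ is taken large enough relative to the geometry. This contradicts $E(v_n,0)=1$ and establishes the observability inequality, hence strong stabilization.

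\textbf{Main obstacle.} The delicate step is the propagation property of the defect measure: one must establish, through careful semiclassical symbolic calculus on the wave operator, the transport equation $H_p\mu=0$ for $p=\tau^{2}-|\xi|_g^{2}$, and then ensure the flow-invariance is strong enough to combine globally with the GCC over the time window $[0,T]$. A secondary difficulty is the handling of the low-frequency tail: on a general (not necessarily real-analytic) Riemannian manifold Holmgren's theorem is unavailable, and one must appeal to a Carleman-type unique continuation result whose pseudoconvexity hypotheses have to be reconciled with the purely dynamical content of the geometric control condition.
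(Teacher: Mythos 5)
The paper does not actually prove Theorem~\ref{thm:GCC-and-Energy-Decay-of-DW}: it is quoted as a classical result, with the text only pointing to the original proof of Bardos--Lebeau--Rauch via the Melrose--Sj\"ostrand propagation theorem and to Lebeau's simpler proof via microlocal defect measures. Your sketch is essentially Lebeau's argument, so there is no in-paper proof to compare against; judged on its own terms, your strategy is the standard one and is correct in outline.

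Two refinements are worth making. For (1)$\Rightarrow$(2), rather than comparing the free Gaussian beam to the damped evolution by Duhamel (the damped and undamped flows with the same data do \emph{not} stay energy-close in general, so showing $E(u_h,T)/E(u_h,0)\to 1$ for the damped solution needs an extra propagation step), it is cleaner to invoke Proposition~\ref{prop:Stability=Observability} first: strong stabilization is equivalent to the observability inequality~\eqref{eq:ineq-observability} for the \emph{undamped} equation, and your beams concentrated on a geodesic avoiding $\{a>0\}$ directly violate~\eqref{eq:ineq-observability} for every $T$ and $C$. For (2)$\Rightarrow$(1), two of the difficulties you flag are less serious than you suggest. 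First, the pointwise condition (2) upgrades automatically to a uniform one (there is $T_0$ such that every unit-speed geodesic meets $\{a>0\}$ by time $T_0$) by compactness of $S^*M$, openness of $\{a>0\}$, and continuity of the geodesic flow; one then runs the defect-measure argument on a window $T>T_0$, which resolves the "combine globally over $[0,T]$" issue. Second, the elimination of the weak limit does not require Holmgren or Carleman estimates for the wave operator: the limit $v$ solves the free wave equation with $a\,\pt v=0$, and expanding $v$ in Laplacian eigenfunctions and time-averaging (exactly as the paper does in Proposition~\ref{prop:Weak-Stability-Property-Criterion} and at the end of Lemma~\ref{LEM::Reduction-Step-2}) reduces the matter to the fact that a nonzero Laplacian eigenfunction cannot vanish on the nonempty open set $\{a>0\}$ --- elliptic unique continuation (or merely that nodal sets have measure zero), valid on any smooth compact manifold. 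So the "main obstacle" in your last paragraph dissolves; the genuinely technical core is, as you say, the transport equation $H_p\mu=0$ for the defect measure, which is Gérard--Tartar's propagation theorem.
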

The proof of Theorem~\ref{thm:GCC-and-Energy-Decay-of-DW} in~\cite{B-L-R} used the propagation theorem developed by Melrose-Sj\"{o}strand~\cite{Melrose-Sjostrand}. Lebeau~\cite{Lebeau} managed to use microlocal defect measures (which is due to Gérard~\cite{Gerard-MDM} and Tartar~\cite{Tartar-H-Measures}, see also \cite{Burq-Bourbaki}) and an argument by contradiction to give a new and much simpler proof. However, when $a\in L^{\infty}(M)$, it remains an open problem to give an equivalent condition for strong stabilization, even though the following necessary condition and sufficient condition are known to be classical, which follow by analyzing the proof of Theorem~\ref{thm:GCC-and-Energy-Decay-of-DW}.
\begin{prop}
\label{prop:Necessary-Sufficient-Condition-for-Stability} 
Let $ (M,g) $ be a compact Riemannian manifold without boundary, and let $0 \le a\in L^{\infty}(M)$, 
\begin{enumerate}
\item if $ a $ strongly stabilizes~\eqref{eq:Damped-Wave-Equation}, then all geodesics of~$M$ intersect with~$\supp a$; 
\item if all geodesics of $M$ enter the open set
$ U(a)=\bigcup_{\epsilon>0}\mathrm{Int}\{x:a(x)>\epsilon\}, $
then $ a $ strongly stabilizes~\eqref{eq:Damped-Wave-Equation}.
\end{enumerate}
\end{prop}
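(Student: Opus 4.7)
The plan is to adapt the Rauch--Taylor--Bardos--Lebeau scheme to the rougher setting $a\in L^\infty(M)$, relying on Lebeau's microlocal-defect-measure (MDM) argument for the sufficient direction. By the standard equivalence between strong stabilization and observability (see~\cite{B-G}), it suffices to establish, respectively, the failure and the validity of an inequality of the form
\begin{equation*}
E(u,0)\le C\int_0^T\int_M a(x)\,|\pt u(t,x)|^2\,\dx\,\dt
\end{equation*}
for every solution $u$ of~\eqref{eq:Damped-Wave-Equation}.

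\textbf{Step 1 (necessity).} Assume some geodesic $\gamma$ avoids $\supp a$. Since $\supp a$ is closed, the compact arc $\gamma([0,T])$ lies in an open tube $V$ on which $a\equiv 0$. I would construct a Gaussian-beam quasi-mode sequence $(u_n)$, spatially concentrated in $V$ along $\gamma$, with frequency $h_n^{-1}\to\infty$ and $E(u_n,0)=1$. These are almost exact solutions of the undamped wave equation on $[0,T]$, and since $\supp u_n(t,\cdot)\subset V$ throughout, the damping integral tends to zero. This violates the observability inequality, so $\gamma$ must meet $\supp a$.

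\textbf{Step 2 (sufficiency).} Argue by contradiction: if observability fails, there is a sequence of solutions $(u_n)$ with $E(u_n,0)=1$ and $\int_0^T\int_M a|\pt u_n|^2\,\dx\,\dt\to 0$. After extracting a subsequence and subtracting its weak limit (see the obstacle below), we may assume $(u_n,\pt u_n)\rightharpoonup 0$ weakly in the energy space while the two conditions above are preserved. Following Lebeau~\cite{Lebeau}, associate to $(u_n)$ a microlocal defect measure $\mu$ on $T^*(\R\times M)\setminus 0$. Three properties then hold by general MDM theory: $\mu$ is carried by the characteristic variety $\{\tau^2=|\xi|_g^2\}$; $\mu$ is invariant under the Hamiltonian flow of the wave symbol, whence its projection on $T^*M\setminus 0$ is invariant under the geodesic flow; and $\mu$ is non-zero, since $E(u_n,0)=1$ while $(u_n,\pt u_n)\rightharpoonup 0$. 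The damping hypothesis forces $\supp\mu$ to project into $M\setminus U(a)$: any compact $K\subset U(a)$ is covered by finitely many open sets on each of which $a\ge\varepsilon>0$ almost everywhere, so $\varepsilon\int_K|\pt u_n|^2\le\int_K a|\pt u_n|^2\to 0$, killing the mass of $\mu$ above~$K$. Geodesic invariance of $\supp\mu$ then produces a complete geodesic contained in $M\setminus U(a)$, contradicting the hypothesis.

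\textbf{Main obstacle.} The delicate step is the weak-limit reduction in Step~2: any weak limit $u_\infty$ of $(u_n,\pt u_n)$ is a wave solution satisfying $\sqrt{a}\,\pt u_\infty=0$ almost everywhere, and one must deduce that $u_\infty$ is a constant so that $E(u_\infty,0)=0$. In the continuous setting of~\cite{B-L-R} this follows from Tataru--Robbiano--Zuily unique continuation on the open set $\{a>0\}$, but under the $L^\infty$ hypothesis one only has control on compacts of $U(a)$ as data region. The assumption that every geodesic enters $U(a)$ allows one to apply propagation of singularities to conclude that the wavefront set of $\pt u_\infty$ is empty, hence that $u_\infty$ is time-independent and of zero energy. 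It is precisely this unique-continuation step, rather than the MDM manipulations, that marks the gap between the necessary and sufficient conditions of the proposition.
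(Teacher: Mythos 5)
Your overall architecture is the classical Rauch--Taylor/Bardos--Lebeau--Rauch/Lebeau scheme that the paper itself invokes for this proposition without writing it out: Gaussian beams along a geodesic missing $\supp a$ for the necessity, and a microlocal defect measure plus a uniqueness--compactness step for the sufficiency. Steps~1 and the MDM part of Step~2 are sound; in particular you correctly isolate where $U(a)$ (rather than $\supp a$) is needed, namely that $a$ is bounded below on compact subsets of $U(a)$, which kills the measure there.

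The genuine gap is in your resolution of the ``main obstacle.'' You propose to conclude that the weak limit $u_\infty$ has zero energy by applying propagation of singularities to deduce that $\mathrm{WF}(\pt u_\infty)=\emptyset$. An empty wavefront set only yields that $\pt u_\infty$ is \emph{smooth}; it does not yield that it vanishes, so the inference ``hence $u_\infty$ is time-independent and of zero energy'' does not follow. Vanishing requires a unique continuation theorem (Holmgren/Tataru--Robbiano--Zuily across non-characteristic surfaces), which you mention and then abandon, or else a more elementary route. The paper's own treatment of exactly this step (in the proof of Proposition~\ref{prop:Weak-Stability-Property-Criterion}, reused in Lemma~\ref{LEM::Reduction-Step-2}) avoids wave-equation unique continuation entirely: since $\sqrt{a}\,\pt u_\infty=0$ a.e., one time-averages $\pt u_\infty$ against $e^{-it\lambda'}$ to isolate each spectral component $v_{\lambda'}^\pm\in E_{\lambda'}$, obtains $a\,v_{\lambda'}^\pm=0$ a.e., and concludes $v_{\lambda'}^\pm=0$ because a Laplacian eigenfunction cannot vanish on a set of positive measure. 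Note that this argument needs only $a\not\equiv 0$, not the geodesic hypothesis, so your concern that the $L^\infty$ setting obstructs the uniqueness step is misplaced; the geodesic condition on $U(a)$ is consumed entirely by the defect-measure step. Replacing your propagation-of-singularities claim by this time-averaging argument (or by a genuine unique continuation theorem) closes the proof.
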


When $ a \in C(M) $, condition~(2) is also necessary because in this case $ U(a) = \{a>0\} $, and we conclude by Theorem~\ref{thm:GCC-and-Energy-Decay-of-DW}. However, for general $ a \in L^\infty(M) $, these two conditions are not sharp. Typical examples are as follows. Let $M=\S^{2}=\{x^2+y^2+z^2 = 1\}$, define the equator $ \Gamma = \S^2 \cap \{z=0\} $ and the hemispheres $ \S^2_\pm = \S^2 \cap \{\pm z > 0\} $. Let $0\le a\in C(\S^2)$ be zero exactly on the equator, while~$a>0$ elsewhere. Theorem~\ref{thm:GCC-and-Energy-Decay-of-DW} says that~$ a $ does not strongly stabilize~\eqref{eq:Damped-Wave-Equation}, for the equator~$ \Gamma $, as a geodesic, does not enter $ \{a>0\} = \S^2_+ \cup \S^2_- $, even though all geodesics enter $ \supp a = \S^2 $. On the other hand, let $a=1_{\S_{+}^{2}}$ be the indicator function of the upper hemisphere, then the equator does not enter $U(a)=\Int \S_{+}^{2}$.
However, the following unpublished result due to Gilles Lebeau shows that~$ a $ indeed strongly stabilizes~\eqref{eq:Damped-Wave-Equation}.

\begin{thm}[Lebeau, unpublished]\label{thm:Lebeau}
For $ d \ge 1 $, let $ \S^d = \{x = (x_1,\ldots,x_{d+1}) \in \R^{d+1} : x_1^2 + \cdots x_{d+1}^2= 1 \} $ be the $ d $-dimensional unit sphere, which inherits the Riemannian metric from~$ \R^{d+1} $. Let $ \S^d_+ = \S^d \cap \{x_{d+1} > 0\} $ denote the upper hemisphere, then $a(x)=1_{\S^d_{+}}(x)$ strongly stabilizes~\eqref{eq:Damped-Wave-Equation}.
\end{thm}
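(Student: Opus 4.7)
By a standard reduction (see e.g.~\cite{B-G}), strong stabilization of~\eqref{eq:Damped-Wave-Equation} is equivalent to the observability inequality
\begin{equation*}
E(u, 0) \le C \int_0^T \int_{\S^d} a(x)\, |\pt u(t,x)|^2 \, \dx \, \dt
\end{equation*}
for some $T, C > 0$ and every finite-energy solution $u$ of the free wave equation on $\S^d$. The plan is to prove this by contradiction, following the microlocal defect measure strategy used by Lebeau~\cite{Lebeau} in his proof of Theorem~\ref{thm:GCC-and-Energy-Decay-of-DW}, and then supplementing it with a quantitative argument that genuinely exploits the jump of $a$ across $\Gamma$.

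If observability fails, there exists a sequence $(u_n)$ of free-wave solutions with $E(u_n,0) = 1$ and $\int_0^T \int_{\S^d} a\,|\pt u_n|^2 \, \dx \, \dt \to 0$. Extracting a subsequence, one associates to $(u_n)$ a non-negative microlocal defect measure $\mu$ on $\R_t \times (T^*\S^d \setminus 0)$, which by the general theory (i) is supported on the characteristic variety $\{\tau^2 = |\xi|_g^2\}$, (ii) is invariant under the Hamiltonian flow of the principal symbol $p = \tau^2 - |\xi|_g^2$, and (iii) has spatial projection $\pi_* \mu$ vanishing over the open upper hemisphere $\S^d_+$. For point (iii), test against a smooth non-negative $\phi \in C_c^\infty(\S^d_+)$: since $\phi \le a$ pointwise, $\int_0^T \int \phi\,|\pt u_n|^2 \to 0$ forces $\int \phi \, \d(\pi_* \mu) = 0$, and exhausting $\S^d_+$ by such $\phi$ gives $\pi_*\mu(\S^d_+) = 0$. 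Since every great circle of $\S^d$ not contained in $\Gamma$ must cross both open hemispheres, flow-invariance then localizes $\supp \mu$ to the bicharacteristic lifts of equatorial geodesics.

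The crux is to deduce that $\mu = 0$, which contradicts $E(u_n,0) = 1$ and closes the argument. As stressed in the excerpt, if $a$ were a continuous damping vanishing only on $\Gamma$, non-trivial invariant measures concentrated on equatorial bicharacteristics could exist, and observability would fail. Hence the proof must genuinely exploit that $a \equiv 1$ on an entire open set whose closure contains $\Gamma$. To this end, the plan is to work in Fermi coordinates $(\theta, y)$ near $\Gamma$, with $\theta$ the signed geodesic distance to $\Gamma$ and $y$ on $\Gamma$, so that $a = 1_{\{\theta > 0\}}$ and the Laplacian separates tangential and normal variables to leading order. By the uncertainty principle, any sequence $u_n$ whose defect measure concentrates on $\pi^{-1}(\Gamma)$ at frequency scale $h_n^{-1}$ must spread transversally on a scale of at least $h_n^{1/2}$; hence a uniformly positive fraction of $\|\pt u_n\|_{L^2(\S^d)}^2$ necessarily lies in $\{\theta > 0\} = \S^d_+$. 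Integrating in time then yields a uniform lower bound on the damping integral, contradicting its vanishing. This could be made rigorous either by developing a second-microlocal defect measure along $\Gamma$, or by a Gaussian-beam decomposition showing that every such concentrating sequence loses a definite fraction of its energy per unit time to the damping region.

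\textbf{Main obstacle.} The principal difficulty is this last step: quantifying the damping lower bound when $a \in L^\infty$ is discontinuous, so that the standard propagation-of-singularities calculus does not apply directly. Care is needed both in setting up the semiclassical framework for a non-smooth damping, and in producing the second-microlocal invariance (or equivalent quantitative bound) that captures the asymmetric jump of $a$ at $\Gamma$; this is the genuine technical heart of the argument and the reason why Proposition~\ref{prop:Necessary-Sufficient-Condition-for-Stability} is not sharp in this geometry.
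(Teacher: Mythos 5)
Your skeleton (reduction to observability, argument by contradiction, defect measure invariant under the geodesic flow and vanishing over $\S^d_+$, hence concentrated on the equatorial bicharacteristics) is sound as far as it goes, and it is in fact the strategy the paper deploys for the harder Zoll generalization (Theorem~\ref{thm:Stability-Zoll-non-GCC}). But the step you yourself identify as the crux is not proved, and the heuristic you offer for it does not close the gap. The uncertainty principle tells you that a sequence concentrating on $\Gamma$ at frequency $h_n^{-1}$ spreads transversally on scale at least $h_n^{1/2}$; it does \emph{not} tell you how that transversal mass is distributed between $\{\theta>0\}$ and $\{\theta<0\}$. A priori nothing in "spreading" forbids the profile from putting an arbitrarily small fraction of its $L^2$ mass on the upper side. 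To get a uniform lower bound on the mass in $\S^d_+$ you must identify the transversal profile and exhibit a symmetry or equidistribution property of it. This is exactly what the paper does in the Zoll case: after separating the rotational variable and rescaling $z=c^{1/4}h^{-1/2}\theta$, the profile is shown to be (close to) a Hermite eigenfunction of the harmonic oscillator $-\partial_z^2+z^2$ when $E=O(h)$, and it is the \emph{parity} of Hermite functions — not mere spreading — that forces equal mass on both sides; when $E\gg h$ a second semiclassical measure invariant under the rotation of the circle $z^2+\zeta^2=1$ does the job, together with Lithner--Agmon estimates to control the classically forbidden region. None of this is supplied, or even correctly anticipated, by the "positive fraction by uncertainty" claim.

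You also miss that on $\S^d$ itself no microlocal analysis is needed at all. The paper's proof of Theorem~\ref{thm:Lebeau} is exact and elementary: since $\spec(-\Delta_d)=\{n(n+d-1)\}$, the operator $\L=-\Delta_d+\tfrac{(d-1)^2}{4}$ satisfies $\spec(\sqrt\L)=\{n+\tfrac{d-1}{2}\}$, so solutions of the perturbed wave equation are superpositions of $e^{\pm it(n+\frac{d-1}{2})}u_n^{\pm}$ with time factors orthogonal in $L^2([0,2\pi])$; this decouples time and reduces observability (after a standard compactness--uniqueness step absorbing the perturbation $\tfrac{(d-1)^2}{4}$) to the single-mode inequality $\|v\|_{L^2(\S^d_+)}\ge C\|v\|_{L^2(\S^d)}$ for spherical harmonics $v$, which holds with $C=1/\sqrt2$ because $v(-x)=(-1)^n v(x)$ gives $\|v\|_{L^2(\S^d_+)}=\|v\|_{L^2(\S^d_-)}$. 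The antipodal symmetry of spherical harmonics plays, in exact form, precisely the role that your unproved equidistribution claim would need to play asymptotically; as written, your argument has a genuine gap at its decisive step.
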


We will first give a simple proof of this theorem (see Section~\ref{sec:Stabilization-S^d-Lebeau}) using the spectral distribution of the spherical Laplacian, and the symmetries of spherical harmonics. Then we extend this result, on dimension 2, to Zoll surfaces of revolution.
\begin{defn}
A Zoll manifold is a Riemannian manifold whose geodesic flow is periodic. A Zoll surface of revolution is a 2 dimensional Zoll manifold, on which the group $\S^1$ acts smoothly, faithfully, and isometrically. 
\end{defn}

We refer to Besse~\cite{Besse} for an introduction of Zoll manifold. Some fundamental geometric properties and examples are stated below. In particular $\S^{d}$ ($ d \ge 1 $) are Zoll manifolds, and~$ \S^2 $ is a Zoll surface of revolution. The geometry of a Zoll surface of revolution resembles much that of $ \S^2 $, which makes it natural for the generalization of Theorem~\ref{thm:Lebeau}. (However, on general Zoll manifolds, such resemblance is not yet clear to the author.) Indeed, we will use the following two aspects of resemblance for our generalization.
\begin{itemize}[noitemsep]
\item Local Geometry: On Zoll surfaces of revolution, the geometric objects such as the equator, and the upper and lower hemi-surfaces are well defined. Moreover, the local geometry near the equator is similar to that near a great circle of~$ \S^2 $.

On a general Zoll manifold, such resemblance is not clear to the knowledge of the author. That is why we will restrict ourselves to Zoll surfaces of revolution.

\item Global geometry: Spectral distribution of the Laplacian-Beltrami operator. See Proposition~\ref{prop:spectral-zoll}. This works for Zoll manifolds of arbitrary dimension, and states that the Laplacian spectrum on Zoll manifolds of dimension~$ d $ is similar to that of the spherical Laplacian on~$ \S^d $.
\end{itemize}
It is worth comparing to the work of Burq-Gérard~\cite{B-G-Stabilization-Wave-Tori} of a similar stabilization problem on tori, where only the local geometry is consulted. 

\begin{prop}[Duistermaat-Guillemin~\cite{D-G-Zoll}]
\label{prop:spectral-zoll}
Let $ \Delta $ be the Laplacian-Beltrami operator on a  Zoll manifold of dimension~$ d $, then  
\begin{equation*}
\spec(-\Delta)\subset \bigsqcup_{n\ge 0} I_{n},
\end{equation*}
where~$ \{I_n\}_{n\ge 0} $ is a family of mutually disjointed intervals, such that 
\begin{equation}
\label{eq:interval-spectrum-zoll} 
I_{n} \subset {}\big](n+\beta/4)^{2}-A,(n+\beta/4)^{2}+A\big[{}
\end{equation}
for some $ \beta > 0 $, $ A > 0 $.
\end{prop}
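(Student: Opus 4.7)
The plan is to follow the Fourier integral operator (FIO) approach of Duistermaat-Guillemin. I would work with the first-order, self-adjoint, elliptic, classical pseudodifferential operator $P_0 = \sqrt{-\Delta + c_0}$, where $c_0 > 0$ is chosen large enough that $-\Delta + c_0$ is strictly positive. Its principal symbol is $p(x,\xi) = |\xi|_g$, whose Hamiltonian flow on $T^*M \setminus 0$ coincides with the co-geodesic flow, hence is periodic by the Zoll hypothesis; after rescaling the metric I may assume the common period equals $2\pi$. The key consequence is that the half-wave propagator $U(2\pi) = e^{2\pi i P_0}$ is a Fourier integral operator whose canonical relation is the graph of the identity on $T^*M \setminus 0$.

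The technical core is an averaging construction due to H\"ormander-Weinstein-Colin de Verdi\`ere: one produces iteratively a self-adjoint classical pseudodifferential operator $Q$ of order $-1$ such that the modified operator $P := P_0 - Q$ satisfies $\exp(2\pi i P) = e^{i\pi\beta/2}\Id$ exactly (modulo a harmless finite-rank correction), where $\beta$ is the common Maslov index of the periodic geodesics. At each step of the symbolic calculus, the obstruction to solving the underlying transport equation along the periodic Hamiltonian flow is the average of a given symbol over an orbit, and periodicity makes it possible to kill this obstruction by adjusting the appropriate term of the symbol of $Q$. Because $P$ is bounded below by ellipticity, its spectrum consists of isolated eigenvalues and satisfies $\spec(P) \subset \{n + \beta/4 : n \ge n_0\}$ for some $n_0 \in \N$.

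Finally, I would transfer this clustering back to $-\Delta$ by squaring. Since $Q$ is of order $-1$ and $P_0$ of order $1$, the operators $P_0 Q$ and $Q P_0$ are of order $0$ and $Q^2$ is of order $-2$, all bounded on $L^2(M)$, so expanding
\begin{equation*}
P^2 = P_0^2 - (P_0 Q + Q P_0) + Q^2 = -\Delta + c_0 - R
\end{equation*}
yields a self-adjoint bounded operator $R$. Weyl's inequality for self-adjoint perturbations then gives
\begin{equation*}
\spec(-\Delta) \subset \bigcup_{n \ge n_0} \big](n+\beta/4)^2 - A, (n+\beta/4)^2 + A\big[,
\end{equation*}
with $A = c_0 + \|R\|_{L^2 \to L^2} + 1$. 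Since consecutive centers are separated by $2n+1+\beta/2$, these open intervals are pairwise disjoint once $n$ is large enough; the finitely many low-frequency components can be grouped into a single $I_0$, producing the claimed~\eqref{eq:interval-spectrum-zoll}.

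The main obstacle is the FIO averaging step producing $Q$: it relies on the nontrivial theorem from Duistermaat-Guillemin that a first-order elliptic self-adjoint operator whose bicharacteristic flow is $2\pi$-periodic can be conjugated, within the algebra of FIOs, so that its wave group at time $2\pi$ becomes a scalar times the identity. Once this machinery is granted, the remainder is purely symbolic calculus plus the min-max principle.
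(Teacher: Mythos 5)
The paper does not prove this proposition; it is quoted verbatim from Duistermaat--Guillemin (and Colin de Verdi\`ere/Weinstein) as a black box, so there is no internal argument to compare yours against. Your sketch is the standard proof of the cited theorem and is essentially correct: the periodicity of the Hamiltonian flow of $|\xi|_g$, the identification of $e^{2\pi i P_0}$ as an FIO associated with the identity canonical relation, the averaging construction of $Q\in\Psi^{-1}$, and the transfer back to $-\Delta$ by squaring plus Weyl's perturbation inequality are all sound, and you correctly identify the averaging theorem as the genuinely deep input. Two remarks. First, you are using more than you need: the full conjugation making $\exp(2\pi i P)$ exactly scalar is not required. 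The first-order statement alone, namely $e^{2\pi i P_0}=e^{i\pi\beta/2}(\Id+W)$ with $W\in\Psi^{-1}$, already forces every eigenvalue $\lambda$ of $P_0$ to satisfy $e^{2\pi i\lambda}=e^{i\pi\beta/2}(1+O(\lambda^{-1}))$, hence $|\lambda-(n+\beta/4)|\le C/\lambda$ for some $n$; squaring eigenvalue by eigenvalue, $(n+\beta/4\pm C/\lambda)^2=(n+\beta/4)^2+O(1)$, which is exactly~\eqref{eq:interval-spectrum-zoll} for $-\Delta+c_0$ and hence for $-\Delta$ after a bounded shift. This bypasses both the iterative construction of $Q$ and the operator identity $P^2=-\Delta+c_0-R$. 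Second, a small bookkeeping caveat in your last step: the finitely many exceptional eigenvalues and the possible overlap of the intervals for small $n$ must be absorbed consistently (the $I_n$ are required to be mutually disjoint while each sits inside its prescribed window), which works because only finitely many indices are involved, but is worth a sentence; and ``modulo a harmless finite-rank correction'' should read ``modulo a smoothing correction, producing clusters of width $O(n^{-\infty})$ rather than exact eigenvalues,'' which is all the squaring argument needs since such a correction is still bounded and self-adjoint.
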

\begin{rem}
\label{remark:spectral-zoll-dim-2}
When~$ d=2 $, we have~$ \beta = 2 $. See Proposition~4.35 of~\cite{Besse}.
\end{rem}
\begin{ex}
In particular, let $\Delta_d$ denote the spherical Laplacian on $ \S^d $, then (see Lemma~\ref{LEM::Spherical-Harmonics})
\begin{equation*}
\spec(-\Delta_{d})=\big\{(n+\tfrac{d-1}{2})^{2}-\tfrac{(d-1)^{2}}{4}:n\in\N\big\}.
\end{equation*}
We simple let $\beta/4=(d-1)/2$, and let $A$ be strictly larger than $(d-1)^{2}/4$.
\end{ex}

\begin{figure}[htb]
\label{fig:zoll-surface-of-revolution}
\caption{Zoll surface of revolution}
\begin{tikzpicture}
\draw [<-,dashed] (0,5) -- (0,4.5) node [above left] {$ N $} -- (0,0.5) node [below right] {$ S $} -- (0,0);
\draw [fill] (0,4.5) circle [radius=1.5pt] (0,0.5) circle [radius=1.5pt] (2,1.5) circle [radius=1.5pt] (1,3.72) circle [radius=1.5pt];
\draw plot [smooth cycle, tension = 0.7] coordinates {(0,4.5) (2,1.5) (0,0.5) (-2,1.5)};
\draw [<->,dashed] (2,1.5) node [right] {$ (\ell_0,\varphi) $} -- (0,1.5);
\draw (-2,1.5) arc (180:360:2 and 0.4);
\draw [dashed] (2,1.5) arc (0:180:2 and 0.4);
\draw (1,1.75) node [above] {$ r(\ell_0) = 1 $} (1,3.72) node [above right] {$ (\ell,\varphi) $} (-1,1.2) node [below] {$ \Gamma $};
\end{tikzpicture}
\end{figure}
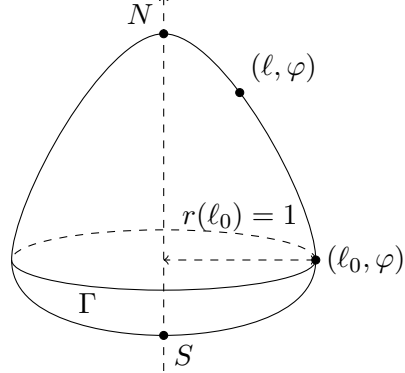

Let~$ \Sigma $ denote a Zoll surface of revolution, we state some local geometries of~$ \Sigma $. More details could be found in~\cite{Besse}. For an intuitive understanding, see Figure~\ref{fig:zoll-surface-of-revolution}. It is known that~$ \Sigma $ is automatically diffeomorphic to~$\S^{2}$, and there exists exactly two distinct points, respectively called the north pole and the south pole, denoted by~$N$ and~$S$, which are invariant under the actions of~$\S^{1}$. We then parametrize the surfaces by coordinates
\begin{equation*}
(\ell,\varphi) \in [0,\mathrm{dist}(N,S)] \times \S^1,
\end{equation*}
where $\ell$ is the arc-length parameter of one (and consequently every) geodesic from~$N$ to~$S$, and~$\varphi$ is the rotational angle corresponding to the actions of~$\S^{1}$, so that the Riemannian metric on~$ \Sigma $ is of the form 
\begin{equation*}
g=\d \ell^{2}+r(\ell)^{2}\d\varphi^{2},
\end{equation*}
where~$r(\ell)$ is the distance from the point~$(\ell,\varphi)$ to the axis of rotation. By Lemma~4.9 of~\cite{Besse}, there exists a unique~$\ell_{0}$ such that~$r(\ell)$ attains its maximum at~$\ell=\ell_{0}$. There is no loss of generality by assuming that $ r(\ell_0) = 1 $. Moreover we have $r'(\ell_{0})=0$, $r''(\ell_{0})<0$. 
The curve $\Gamma=\{(\ell_0,\varphi) : \varphi \in \S^1\}$ defines a closed geodesic of period~$ 2\pi $ (because $ r(\ell_0) = 1 $) called the \textit{equator}, while the regions $\Sigma^{+}=\{(\ell,\varphi) : \ell>\ell_{0}\}$ and $\Sigma^{-}=\{(\ell,\varphi) : \ell<\ell_{0}\}$ are called the \textit{upper and lower hemi-surfaces} respectively. Similarly to~$ \S^2 $, all geodesics on~$ \Sigma $ enter~$ \Sigma^+ $ except for the equator~$ \Gamma $.
\begin{rem}
\label{remark:local-geometry-equator}
If we denote $ c = - r''(\ell_0) / 2 > 0 $, then
\begin{equation*}
r(\ell) = 1 - c (\ell -\ell_0)^2 + O((\ell -\ell_0)^3).
\end{equation*}
This local geometry will be essential in performing a microlocal analysis near~$ \Gamma $ that proves our main theorem (Theorem~\ref{thm:Stability-Zoll-non-GCC}). In particular, if $ \Sigma = \S^2 $, then we take $ r(\ell) = \cos \ell $, such that $ (\ell,\varphi) \in [-\pi/2,\pi/2] \times \S^1 $ parametrizes~$ \S^2 $. In this case $ \ell_0 = 0 $, and
\begin{equation*}
r(\ell) = \cos \ell = 1 - \frac{1}{2} \ell^2 + O(\ell^3).
\end{equation*}
\end{rem}

\begin{rem}
\label{remark:criteria-zoll}
Using the change of variable $r(\ell)=\sin\theta$, to describe a Zoll surface of revolution, it is equivalent to give a Riemannian metric to~$ \S^2 $. By an abuse of notation, we still use~$ g $ to denote the metric on $ \S^d $ obtained by this isometry. If we parametrize~$ \S^2 $ by~$(\theta,\varphi)$, where~$\theta$ is the latitude while~$\varphi$ is the longitude of~$\S^{2}$, then by Corollary~4.16 of~\cite{Besse}, $(\S^{2},g)$ is a Zoll surface of revolution if and only if
\begin{equation*}
g=(1+h(\cos\theta))^{2}\d\theta^{2}+\sin^{2}\theta\d\varphi^{2},
\end{equation*}
for some smooth odd function $h$ from $[-1,1]$ to $(-1,1)$ with $h(1)=h(-1)=0$.
\end{rem}

Now we state the main result of this paper.

\begin{thm}
\label{thm:Stability-Zoll-non-GCC}
Let $\Sigma$ be a Zoll surface of revolution, then $a=1_{\Sigma^{+}}$ strongly stabilizes~\eqref{eq:Damped-Wave-Equation}.
\end{thm}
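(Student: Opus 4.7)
The plan is to argue by contradiction in the spirit of Lebeau~\cite{Lebeau} and the microlocal approach used by Burq-G\'erard~\cite{B-G-Stabilization-Wave-Tori} for a similar stabilization problem on tori. Since strong stabilization is equivalent to a high-frequency resolvent estimate for $-\Delta - \tau^2 - i\tau a$, its failure produces a sequence $\lambda_n \to +\infty$ and $L^2$-normalized quasimodes $u_n$ with $(-\Delta - \lambda_n^2) u_n = o_{L^2}(\lambda_n)$ and $\|u_n\|_{L^2(\Sigma^+)} \to 0$. Setting $h_n = 1/\lambda_n$, any semiclassical defect measure $\mu$ of $(u_n, h_n)$ is a positive Radon measure on $T^*\Sigma \setminus 0$, carried by the characteristic set $\{|\xi|_g = 1\}$, invariant under the geodesic flow, and vanishing on $\pi^{-1}(\mathrm{Int}\,\Sigma^+)$. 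Since every geodesic of $\Sigma$ other than the equator $\Gamma$ enters $\Sigma^+$, flow invariance forces $\mu$ to be supported on the Hamiltonian lift $\{(\ell_0, \varphi, 0, \pm 1) : \varphi \in \S^1\}$ of $\Gamma$.

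Next I would use Proposition~\ref{prop:spectral-zoll} (with $\beta = 2$ by Remark~\ref{remark:spectral-zoll-dim-2}) to spectrally project $u_n$ onto a single cluster $I_{k_n}$, so that $\lambda_n = k_n + 1/2 + O(1/k_n)$, which is the semiclassical scale imposed by the Zoll structure. The $\S^1$-symmetry of $\Sigma$ yields a Fourier decomposition $u_n = \sum_m e^{im\varphi} v_{n,m}(\ell)$, and the support of $\mu$ on $\{\xi_\varphi = \pm 1\}$ means that only the angular modes with $|m| = k_n + 1/2 + O(1)$ carry the limiting $L^2$-mass. For such a mode, the conjugation $v = w/\sqrt{r}$ brings $v_{n,m}$ to a one-dimensional semiclassical Schr\"odinger equation
\begin{equation*}
\bigl(-h_n^2 \partial_\ell^2 + V_{n,m}(\ell) - 1\bigr) w_{n,m} = o_{L^2}(1), \qquad V_{n,m}(\ell) = \frac{(h_n m)^2}{r(\ell)^2} + O(h_n^2),
\end{equation*}
whose effective potential has a non-degenerate minimum at $\ell = \ell_0$. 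Rescaling $\ell - \ell_0 = \sqrt{h_n}\,\sigma$ and using the expansion $r(\ell) = 1 - c(\ell-\ell_0)^2 + O((\ell-\ell_0)^3)$ from Remark~\ref{remark:local-geometry-equator}, the principal part becomes a harmonic oscillator $-\partial_\sigma^2 + 2c\sigma^2$, whose eigenfunctions are Hermite functions, hence \emph{exactly even or odd} in $\sigma$. It follows that, in a tubular neighborhood of $\Gamma$, $|u_n|^2$ is symmetric under $\ell \mapsto 2\ell_0 - \ell$ up to an $o(1)$ error, yielding
\begin{equation*}
\int_{\Sigma^+} |u_n|^2 \, \dx = \tfrac{1}{2} + o(1),
\end{equation*}
which contradicts $\|u_n\|_{L^2(\Sigma^+)}^2 \to 0$.

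The main obstacle I anticipate is making this asymptotic reflection symmetry rigorous in the absence of a global isometric involution of $\Sigma$ about $\Gamma$. On $\S^2$, the parity of spherical harmonics under $z \mapsto -z$ gives an exact $\tfrac{1}{2}$-splitting for Lebeau's proof of Theorem~\ref{thm:Lebeau}; on a generic Zoll surface of revolution the reflection symmetry is only asymptotic and must be extracted from a semiclassical Birkhoff normal form centered on the degenerate minimum of $V_{n,m}$. Controlling the cubic and higher corrections coming from the expansion of $r(\ell)$, and verifying that they produce only $O(\sqrt{h_n})$ perturbations of the Hermite reference functions, is where the technical core of the argument will lie. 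A secondary, milder issue is handling the Fourier modes $m$ with $|m - (k_n + 1/2)|$ bounded but nonzero: these do not concentrate on $\Gamma$, and one must invoke the propagation argument of the first paragraph to show that their $L^2$-mass leaks into $\Sigma^+$, hence is already forced to be $o(1)$, leaving the equator-concentrated modes to carry the full contradiction above.
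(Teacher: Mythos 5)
Your overall architecture (contradiction, quasimodes, defect measure forcing concentration on the lift of $\Gamma$, Fourier decomposition in $\varphi$, 1-D reduction, parity of Hermite functions) matches the paper's, and your reduction via resolvent estimates is an acceptable substitute for the paper's observability route through the perturbed operator $\L$. But there is a genuine gap in the step where you discard angular modes. The localization of the semiclassical measure on $\{\xi_\varphi=\pm1\}$ only yields $h_n|m|=1+o(1)$, i.e.\ $|m|=h_n^{-1}(1+o(1))$; it does \emph{not} confine the relevant modes to $|m|=k_n+\tfrac12+O(1)$. Writing $E=1-h_n^2m^2\approx 2(1-h_n|m|)$, the defect-measure argument only gives $E=o(1)$, and your harmonic-oscillator analysis applies precisely when $E=O(h_n)$, i.e.\ $|m-h_n^{-1}|=O(1)$. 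The intermediate regime $h_n\ll E\ll 1$ (equivalently $1\ll h_n^{-1}-|m|\ll h_n^{-1}$) is untouched: these modes concentrate in a shrinking neighborhood $|\ell-\ell_0|\lesssim E^{1/2}\to 0$ of the equator, so the propagation argument of your first paragraph cannot show their mass leaks into $\Sigma^+$ (the $h_n$-scale measure sees only the point $\xi=0$ over $\Gamma$), while under your rescaling $\ell-\ell_0=\sqrt{h_n}\,\sigma$ they spread over $|\sigma|\sim (E/h_n)^{1/2}\to\infty$ and are not close to any fixed Hermite function. Your closing paragraph misidentifies the danger: modes with $|m-(k_n+\tfrac12)|$ bounded are exactly the tractable ones, and the ones you dismiss as "already forced to be $o(1)$" are not.

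The paper closes this regime with a second microlocalization: rescaling by $z=c^{1/2}E^{-1/2}x$ with a \emph{new} semiclassical parameter $\h=c^{1/2}E^{-1}h=o(1)$ turns the equation into $(-\h^2\partial_z^2+z^2+o(1))\tilde w=\tilde w+o(\h)$, whose $\h$-semiclassical measure lives on the circle $\{z^2+\zeta^2=1\}$ and is invariant under the rotation generated by $H_{z^2+\zeta^2}$, hence cannot vanish on $\{z>0\}$ without vanishing identically. You would need this (or an equivalent WKB/Airy analysis near the turning points) to complete the argument. Two secondary remarks: (i) the obstacle you flag as the technical core (asymptotic reflection symmetry, Birkhoff normal form) is handled in the paper much more cheaply, by a Lithner--Agmon estimate confining the mass to $|\sigma|\lesssim h^{-\delta}$ where the cubic correction to $r$ is $O(h^{1/2-3\delta})=o(1)$, followed by a direct expansion in the Hermite basis; (ii) within a spectral cluster $I_{k_n}$ and a fixed angular mode $m$ there may a priori be several eigenvalues, and one must rule out (as the paper does via $L^2(\rho^2\dx)$-orthogonality) that two orthogonal eigenfunctions both converge to the same Hermite function.
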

\begin{rem}
As a direct consequence of our proof, in order for~$ a $ to strongly stabilize~\eqref{eq:Damped-Wave-Equation}, it suffices for~$ a $ to be bounded from below by a positive constant in a half-neighborhood of the equator. To be precise, this means that there exists some $ \varepsilon > 0 $, $ \delta > 0 $, such that 
\begin{equation*} 
a(\ell,\varphi) \ge \delta \cdot 1_{\ell_0 < \ell < \ell_0+\varepsilon}(\ell,\varphi).
\end{equation*}
However, we will only prove the case when $ a=1_{\Sigma^{+}} $ for simplicity.
\end{rem}

\subsection{Stabilization of Damped Waves on $\S^{d}$}

\label{sec:Stabilization-S^d-Lebeau}

In this section we prove Theorem~\ref{thm:Lebeau}. First we recall the following classical result, due to J.-L.~Lions~\cite{Lions}.
\begin{prop}
\label{prop:Stability=Observability}
Let $ (M,g) $ be a compact Riemannian manifold without boundary, and let $ 0 \le a \in L^\infty(M) $, then the following two statements are equivalent.
\begin{enumerate}
\item $ a $ strongly stabilizes~\eqref{eq:Damped-Wave-Equation}.
\item For some $T>0$, $C>0$, and for every solution~$ u $ to the Cauchy problem of the undamped wave equation
\begin{equation}
\label{eq:undamped-wave-eq}
\begin{cases}
(\partial_{t}^{2}-\Delta)u=0, & \mathrm{in\ }\mathcal{D}'(\R\times M);\\
(u,\pt u)_{t=0}=(u_{0},u_{1}), & \in H^{1}(M)\times L^{2}(M),
\end{cases}
\end{equation}
the following observability inequality holds,
\begin{equation}
\label{eq:ineq-observability}
E(u,0)\le C\int_{0}^{T}\int_{M}a|\pt u|^{2}\dx\,\dt.
\end{equation}
\end{enumerate}
\end{prop}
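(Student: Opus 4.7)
The plan is to reformulate both conditions as observability inequalities for the damped equation, and then transfer between the damped and the undamped equations via a Duhamel argument. First, by the energy identity~\eqref{eq:energy-identity}, strong stabilization of~\eqref{eq:Damped-Wave-Equation} is equivalent to the following ``damped observability'' inequality: for some $ T > 0 $ and $ C > 0 $ and every solution~$ v $ of~\eqref{eq:Damped-Wave-Equation},
\begin{equation*}
E(v, 0) \le C \int_0^T \int_M a |\pt v|^2 \dx \dt.
\end{equation*}
Indeed, this inequality immediately yields $ E(v, T) \le (1 - 1/C) E(v, 0) $, which iterated gives exponential decay; conversely, the exponential decay implied by strong stabilization (recalled in the paragraph preceding Theorem~\ref{thm:GCC-and-Energy-Decay-of-DW}) gives $ E(v, T) \le E(v, 0)/2 $ for $ T $ large enough, so that $ \int_0^T \int_M a |\pt v|^2 \ge E(v, 0)/2 $.

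Next, I would relate damped and undamped observability via the Duhamel principle. Given initial data $(u_0, u_1)$, let~$u$ and~$v$ be respectively the undamped and damped solutions with this data; then $w = u - v$ satisfies $(\pt^2 - \Delta) w = a \pt v$ with zero initial data, and the standard energy estimate
\begin{equation*}
E(w, t)^{1/2} \le \int_0^t \|a \pt v(s)\|_{L^2(M)} \ds \le \|a\|_\infty^{1/2} t^{1/2} \Bigl( \int_0^t \int_M a |\pt v|^2 \Bigr)^{1/2},
\end{equation*}
combined with $\pt u = \pt v + \pt w$, yields tractable relations between $E(u, 0)$, $\int_0^T \int_M a |\pt u|^2$, and $\int_0^T \int_M a |\pt v|^2$. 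In the direction $(2) \Rightarrow (1)$, these combine straightforwardly: applying the hypothesized undamped observability to the undamped solution sharing initial data with~$v$ yields $E(v,0) \le K \bigl(E(v,0) - E(v,T)\bigr)$ for some $K = K(C, T, \|a\|_\infty)$, which is damped observability and hence strong stabilization.

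The main obstacle is the reverse direction $(1) \Rightarrow (2)$: the Duhamel correction produces a term of size $\|a\|_\infty^2 T^2 E(u, 0)$ that cannot be absorbed when $T$ is the possibly large observation time coming from damped observability. The standard remedy is a compactness-uniqueness argument. The Duhamel estimates unconditionally yield the weakened inequality
\begin{equation*}
E(u, 0) \le C' \int_0^T \int_M a |\pt u|^2 \dx \dt + C' \bigl\|(u_0, u_1)\bigr\|_{L^2(M) \times H^{-1}(M)}^2;
\end{equation*}
if the clean undamped observability~\eqref{eq:ineq-observability} were to fail, one extracts a sequence $u_n$ of undamped solutions with $E(u_n, 0) = 1$ and $\int_0^T \int_M a |\pt u_n|^2 \to 0$, whose weak limit $u$ satisfies $a \pt u = 0$. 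Then $u$ solves both the damped and the undamped equation, so strong stabilization (together with energy conservation for the undamped equation) forces $u \equiv 0$; Rellich's compact embedding then gives $\|(u_n(0), \pt u_n(0))\|_{L^2 \times H^{-1}} \to 0$, which via the weakened observability contradicts $E(u_n, 0) = 1$.
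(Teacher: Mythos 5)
The paper does not actually prove this proposition --- it is quoted as a classical result of Lions --- so there is no in-paper argument to compare against. Your overall strategy (strong stabilization $\Leftrightarrow$ damped observability via the energy identity, then transfer between the damped and undamped flows by Duhamel) is the standard one, and your direction $(2)\Rightarrow(1)$ is correct as written.

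In the direction $(1)\Rightarrow(2)$, however, the step you lean on is not justified as stated: the Duhamel estimates do not ``unconditionally yield'' the weakened inequality with remainder $\|(u_0,u_1)\|_{L^2\times H^{-1}}^2$. Writing $w=v-u$ (damped minus undamped), $w$ solves $(\pt^2-\Delta+a\pt)w=-a\,\pt u$ with zero data, and the crude bound $\|a\pt u\|_{L^2}\le\|a\|_\infty\|\pt u\|_{L^2}\lesssim \|a\|_\infty E(u,0)^{1/2}$ produces a remainder of order $T^2\|a\|_\infty^2E(u,0)$, which is \emph{not} compact relative to the energy. (Contrast with Lemmas~\ref{lem:observability-reduction-to-perturbed-wave-S^d} and~\ref{LEM::Reduction-Step-2} of the paper, where the Duhamel source is $Ku$ with $K$ bounded on $L^2$ --- one derivative weaker than the energy --- which is what makes the remainder $\|(u_0,u_1)\|_{L^{2}\times H^{-1}}^{2}$ compact there.) Fortunately your detour is unnecessary: the sharp bound you yourself display, $\|a\pt u(s)\|_{L^2(M)}^2\le\|a\|_\infty\int_M a|\pt u(s)|^2\dx$, shows that the Duhamel correction is controlled by the \emph{observed quantity} rather than by the energy,
\begin{equation*}
\int_0^T\!\!\int_M a|\pt w|^2\,\dx\,\dt
\;\le\; 2\|a\|_\infty T\sup_{0\le t\le T}E(w,t)
\;\le\; \|a\|_\infty^2T^2\int_0^T\!\!\int_M a|\pt u|^2\,\dx\,\dt,
\end{equation*}
so that the damped observability $E(v,0)\le C\int_0^T\int_M a|\pt v|^2\,\dx\,\dt$ yields directly
\begin{equation*}
E(u,0)=E(v,0)\;\le\; 2C\big(1+\|a\|_\infty^2T^2\big)\int_0^T\!\!\int_M a|\pt u|^2\,\dx\,\dt .
\end{equation*}
The factor $\|a\|_\infty^2T^2$ is large but harmless: it multiplies the observation term, and nothing needs to be absorbed into the left-hand side. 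With this correction the two directions become symmetric perturbation arguments and no compactness--uniqueness step is needed for this proposition.
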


Therefore it remains to establish this observability inequality. Coming back to~$ \S^d $, we recall some basic properties of the spherical Laplacian and spherical harmonics (see for example Chapter~IV, Section~2 of~\cite{S-W}).
\begin{lem}
\label{LEM::Spherical-Harmonics} Let $\Delta_{d}$ denote the
spherical Laplacian on $\S^{d}$, then 
\begin{enumerate}
\item $\spec(-\Delta_{d})=\big\{\lambda_{n}^{2}=n(n+d-1)=(n+\frac{d-1}{2})^{2}-\frac{(d-1)^{2}}{4}:n\in\N\big\}$. 
\item The eigenspace~$E_{n}$ to~$ -\Delta_d $ of eigenvalue~$ \lambda^2_n $ consists of spherical harmonics of degree $n$, which are restrictions to $\S^{d}$ of harmonic polynomials of $d+1$ variables, homogeneous of degree $n$. In particular, if $u\in E_{n}$, then $u(-x)=(-1)^{n}u(x)$. 
\end{enumerate}
\end{lem}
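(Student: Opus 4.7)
\medskip
\noindent\textbf{Proof plan.} This lemma is classical (see, e.g.,~\cite{S-W}); the plan is to build a complete orthogonal decomposition of $L^{2}(\S^{d})$ out of restrictions to $\S^{d}$ of homogeneous harmonic polynomials on $\R^{d+1}$.

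First I would let $\mathcal{H}_{n}$ denote the space of harmonic polynomials on $\R^{d+1}$ that are homogeneous of degree $n$, and set $E_{n}=\{P|_{\S^{d}}:P\in\mathcal{H}_{n}\}$. To recover the eigenvalues in~(1), I would use the decomposition of the Euclidean Laplacian in polar coordinates,
\begin{equation*}
\Delta_{\R^{d+1}}=\partial_{r}^{2}+\frac{d}{r}\partial_{r}+\frac{1}{r^{2}}\Delta_{d},
\end{equation*}
applied to $P(r\omega)=r^{n}Y(\omega)$ with $Y=P|_{\S^{d}}$; together with $\Delta_{\R^{d+1}}P=0$, a direct computation forces $-\Delta_{d}Y=n(n+d-1)Y$. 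The parity statement of~(2) is then immediate from the homogeneity $P(-x)=(-1)^{n}P(x)$ of elements of $\mathcal{H}_{n}$.

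Next I would establish that $\bigoplus_{n}E_{n}$ is dense in $L^{2}(\S^{d})$, which together with the mutual orthogonality of the $E_{n}$ (as eigenspaces of the self-adjoint operator $-\Delta_{d}$ attached to distinct eigenvalues) pins down the spectrum as claimed in~(1). The key ingredient is the Fischer decomposition: every polynomial in $d+1$ variables admits a unique representation $P=\sum_{k\ge 0}|x|^{2k}H_{k}$ with $H_{k}$ harmonic and homogeneous of degree $\deg P-2k$. Restricting to $\S^{d}$ then places every polynomial restriction inside $\bigoplus_{n}E_{n}$, and by Stone--Weierstrass such restrictions are dense in $C(\S^{d})$, hence in $L^{2}(\S^{d})$.

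The main obstacle is the Fischer decomposition itself, which I would prove by induction on degree, exploiting the fact that the Laplacian sends degree-$n$ polynomials surjectively onto degree-$(n-2)$ polynomials with kernel exactly $\mathcal{H}_{n}$, yielding the direct sum decomposition of the space of degree-$n$ polynomials as $\mathcal{H}_{n}\oplus|x|^{2}\cdot(\text{degree-}(n-2)\text{ polynomials})$. All remaining steps (the polar-coordinate formula, Stone--Weierstrass, and the spectral theorem applied to $-\Delta_{d}$) are entirely routine.
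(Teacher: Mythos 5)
Your proof plan is correct and is precisely the classical argument from the reference the paper cites (Stein--Weiss, Chapter~IV, Section~2); the paper itself states this lemma without proof as a standard fact. The only cosmetic point is that the Fischer decomposition $P=\sum_{k}|x|^{2k}H_{k}$ with $H_{k}$ homogeneous of degree $\deg P-2k$ is stated for \emph{homogeneous} $P$ and should be applied to each homogeneous component of a general polynomial, but this does not affect the argument.
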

As a consequence, each $ u \in H^s(\S^d) $ with $ s \in \R $ admits a unique decomposition in distributional sense of the following form,
\begin{equation*}
u = \sum_{n \ge 0} u_n, \quad \mathrm{with\ } u_n \in E_n.
\end{equation*}
This allows us to specify the~$ H^s(\S^d) $ norm in terms of this decomposition by setting
\begin{equation*}
\|u\|_{H^s}^2 = \|(1 + \Delta)^{s/2} u\|_{L^2}^2 = \sum_{n \ge 0} \langle \lambda_n \rangle^{2s} \|u_n\|_{L^2}^2, \quad \mathrm{with\ } \langle \lambda_n \rangle = \sqrt{1 + \lambda_n^2}.
\end{equation*}
We then introduce a new differential operator as a perturbation of~$ -\Delta_d $,
\begin{equation}
\label{eq:def-L-S^d}
\L=-\Delta_{d}+\tfrac{(d-1)^{2}}{4}.
\end{equation}
The advantage of~$ \L $ to~$ -\Delta_d $ is that the spectrum of~$ \L $ consists of \textit{exact} squares of arithmetic sequence, $ \spec(\L) = \{(n+\frac{d-1}{2})^{2} : n\in\N\} $, so that
\begin{equation}
\label{eq:Spectral-Perturbated-Spherical-Laplacian}
\spec(\sqrt{\L})=\big\{n+\tfrac{d-1}{2}:n\in\N\big\}.
\end{equation}
Solving the following Cauchy problem
\begin{equation}
\label{eq:equation-L-S^d}
\begin{cases}
(\partial_{t}^{2}+\L)u=0, & \mathrm{in\ }\mathcal{D}'(\R\times\S^{d});\\
(u,\pt u)_{t=0}=\left(u_{0},u_{1}\right), & \in H^{1}(\S^{d})\times L^2(\S^{d}),
\end{cases}
\end{equation}
by using Fourier series,
\begin{equation}
\label{eq:solution-L-Fourier}
\begin{split}
u(t) & = \cos(t\sqrt{\L}) u_0 + \sqrt{\L}^{-1} \sin(t\sqrt{\L}) u_1 \\
& = \begin{cases}
\sum_{n\ge 0}\big(e^{it(n+\frac{d-1}{2})}u_{n}^{+}+e^{-it(n+\frac{d-1}{2})}u_{n}^{-}\big), & d\ge 2, \\
u_{0}^{0}+u_{0}^{1}t+\sum_{n\ge 1}\big(e^{itn}u_{n}^{+}+e^{-itn}u_{n}^{-}\big), & d=1,
\end{cases}
\end{split}
\end{equation}
where we write $u_{0}=\sum_{n\ge 0}u_{n}^{0},\ u_{1}=\sum_{n\ge 0}u_{n}^{1}$, with $u^{i}_{n}\in E_{n}$, and by an explicit calculation, we have for $n\ge 0$ when $d\ge 2$ and $n\ge 1$ when $d=1$,
\begin{equation*}
u_{n}^{+}+u_{n}^{-}=u_{n}^{0},\quad i(n+\tfrac{d-1}{2})(u_{n}^{+}-u_{n}^{-})=u_{n}^{1}.
\end{equation*}
If we assume $ (u_0,u_1) \in H^s \times H^{s-1} $ for some $ s \in \R $, then this expression gives an a priori bound for $ \|u\|_{L^\infty_\loc H^s} $. Indeed, for $ d \ge 2 $ ($ d = 1 $ is similar), by the characterization of the~$ H^s $ norm, and the triangular inequality,
\begin{equation}
\label{eq:estimate-a-priori-wave}
\begin{split}
\|u(t)\|_{H^s}^2 
& = \sum_{n \ge 0} \langle \lambda_n \rangle^{2s} \|e^{it(n+\frac{d-1}{2})}u_{n}^{+}+e^{-it(n+\frac{d-1}{2})}u_{n}^{-}\|_{L^2}^2 \\
& = \sum_{n \ge 0} \langle \lambda_n \rangle^{2s} \|\cos\big(t(n+\tfrac{d-1}{2})\big) u^0_n + (n+\tfrac{d-1}{2})^{-1} \sin\big(t(n+\tfrac{d-1}{2})\big) u^1_n\|_{L^2}^2 \\
& \lesssim \sum_{n \ge 0} \langle \lambda_n \rangle^{2s} \|u^0_n\|_{L^2}^2 + \sum_{n \ge 0} \langle \lambda_n \rangle^{2s} (n+\tfrac{d-1}{2})^{-2} \|u^1_n\|_{L^2}^2 \\
& \lesssim \|u_0\|_{H^s}^2 + \|u_1\|_{H^{s-1}}^2.
\end{split}
\end{equation}
When $ d \ge 2 $, we obtain $ \|u\|_{L^\infty H^s} \lesssim \|u_0\|_{H^s} + \|u_1\|_{H^{s-1}} $, while for $ d = 1 $, the same estimate holds after replacing $ \|u\|_{L^\infty H^s} $ with $ \|u\|_{L^\infty_\loc H^s} $, due to the linear growth in time of the term~$ u_0^1 t $.

Observe that in the expression of the solution, the family of factors $\{e^{\pm it(n+\frac{d-1}{2})}\}_{n\in\N}$ are orthogonal in $L^2([0,2\pi])$. This fact makes the observability of~\eqref{eq:undamped-wave-eq} easier to prove, due to the following two reduction lemmas.

\begin{defn}
We say that $ a $ observes~\eqref{eq:equation-L-S^d} if for some constants $ T > 0 $, $ C > 0 $ and every solution~$ u $ to~\eqref{eq:undamped-wave-eq}, the observability inequality~\eqref{eq:ineq-observability} holds. 
We say that $ a $ observes the spherical harmonics, if for some $C>0$, and every spherical harmonic $v \in \cup_{n\in\N} E_{n}$,
\begin{equation}
\label{eq:ineq-observability-spherical-harmonics}
\|a^{1/2}v\|_{L^2(\S^{d})}\ge C \|v\|_{L^2(\S^{d})}
\end{equation}
\end{defn}

\begin{lem}
\label{lem:observability-reduction-to-perturbed-wave-S^d}
For $ M = \S^d $, let $ 0 \le a \in L^\infty(\S^d) $, if~$ a $ observes~\eqref{eq:equation-L-S^d}, then~$ a $ observes~\eqref{eq:undamped-wave-eq}.
\end{lem}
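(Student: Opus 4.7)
The plan is a Duhamel-type reduction combined with a compactness-uniqueness argument. The starting point is the algebraic identity $\L = -\Delta_d + (d-1)^2/4$: if $u$ solves the undamped wave equation~\eqref{eq:undamped-wave-eq}, then $(\pt^2+\L)u = \tfrac{(d-1)^2}{4} u$. The case $d=1$ is trivial since $\L = -\Delta_d$, so I focus on $d \ge 2$, where $\sqrt{\L}$ is invertible on $L^2(\S^d)$.

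Write $u = v + w$, where $v$ solves the homogeneous $\L$-equation~\eqref{eq:equation-L-S^d} with the same initial data $(u_0,u_1)$, and by Duhamel's formula,
\[
w(t) = \frac{(d-1)^2}{4} \int_0^t \frac{\sin((t-s)\sqrt{\L})}{\sqrt{\L}} u(s) \,\ds.
\]
Apply the hypothesis to $v$ to obtain $E_\L(v,0) \le C\int_0^T \int_{\S^d} a |\pt v|^2 \,\dx\,\dt$, where $E_\L$ is the energy naturally associated with the $\L$-equation. Since $\L \ge -\Delta_d$ as operators, $E_\L(v,0) \ge E(u,0)$ on the common initial data. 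Decomposing $\pt v = \pt u - \pt w$ and applying the triangle inequality reduces the task to controlling $\int_0^T \int_{\S^d} a |\pt w|^2 \,\dx\,\dt$ by a norm of $(u_0,u_1)$ strictly weaker than $H^1\times L^2$.

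Differentiating the Duhamel formula gives $\pt w(t) = \tfrac{(d-1)^2}{4}\int_0^t \cos((t-s)\sqrt{\L}) u(s)\,\ds$. By Minkowski's inequality and the $L^2$-boundedness of $\cos((t-s)\sqrt{\L})$, $\|\pt w(t)\|_{L^2} \lesssim T \|u\|_{L^\infty([0,T];L^2)}$. The a priori estimate analogous to~\eqref{eq:estimate-a-priori-wave} for the wave equation (via the explicit Fourier series and the $L^2$-boundedness of $\sqrt{-\Delta_d}^{-1}$ when $d \ge 2$) yields $\|u\|_{L^\infty L^2} \lesssim \|u_0\|_{L^2} + \|u_1\|_{H^{-1}}$. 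Combining everything I reach
\[
E(u,0) \le C_1 \int_0^T \!\!\int_{\S^d} a|\pt u|^2 \,\dx\,\dt + C_2(T)\bigl(\|u_0\|_{L^2}^2 + \|u_1\|_{H^{-1}}^2\bigr).
\]

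The final step is a standard compactness-uniqueness argument to absorb the lower-order term. Supposing the observability for~\eqref{eq:undamped-wave-eq} fails, extract a sequence $\{u^n\}$ of wave solutions with $E(u^n,0) = 1$ and $\int_0^T \int a|\pt u^n|^2 \to 0$. The inequality above forces $\|(u_0^n, u_1^n)\|_{L^2\times H^{-1}}$ to stay bounded away from zero; by the compact embeddings $H^1 \hookrightarrow L^2$ and $L^2 \hookrightarrow H^{-1}$, pass to a subsequence converging strongly in $L^2 \times H^{-1}$ to a nonzero limit $(\tilde u_0, \tilde u_1)$. The corresponding wave solution $\tilde u$ satisfies $a \pt \tilde u = 0$ in $L^2([0,T]\times\S^d)$, and a unique continuation argument applied to the wave solution $\pt \tilde u$ (which vanishes on $[0,T]\times\{a>0\}$) forces $\tilde u \equiv 0$, the desired contradiction. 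I expect this unique continuation step to be the main obstacle for general $a\in L^\infty(\S^d)$, though for the intended application $a = 1_{\S_+^d}$ the set $\{a>0\}$ is open and classical UCP (e.g.\ via Holmgren's theorem) suffices.
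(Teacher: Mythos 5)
Your decomposition $u=v+w$, the Duhamel formula for $\pt w$, the resulting weak observability with the compact remainder $\|(u_0,u_1)\|_{L^2\times H^{-1}}^2$, and the concluding compactness--uniqueness step are exactly the paper's argument, so the skeleton is right. But the last step has two genuine gaps. First, the zero mode: $E(u^n,0)=1$ bounds $\|\nabla u_0^n\|_{L^2}$ and $\|u_1^n\|_{L^2}$ but not $\|u_0^n\|_{L^2}$, so without first normalizing $\int_{\S^d}u_0^n\,\dx=0$ (harmless, since both sides of~\eqref{eq:ineq-observability} are invariant under adding a constant to $u_0$) you cannot extract a subsequence converging in $L^2\times H^{-1}$. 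The same normalization is needed to close the contradiction: unique continuation at best yields $\pt\tilde u\equiv 0$, hence $\tilde u$ constant, and a nonzero constant has $\|(\tilde u_0,\tilde u_1)\|_{L^2\times H^{-1}}>0$ while carrying zero energy --- so ``$\tilde u\equiv 0$'' does not follow and the contradiction is not reached unless constants have been quotiented out from the start. (The paper handles this by working in $\mathcal H^s=H^s/\C$; see Lemma~\ref{LEM::Reduction-Step-2}. The same caveat affects your a priori bound $\|u\|_{L^\infty L^2}\lesssim\|u_0\|_{L^2}+\|u_1\|_{H^{-1}}$: since $\sqrt{-\Delta_d}$ is \emph{not} invertible, the constant mode contributes a term $u_0^1 t$, and the estimate only holds locally in time with a $T$-dependent constant --- which suffices here.)

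Second, your uniqueness step invokes unique continuation across $\{a>0\}$, which, as you concede, does not work for general $0\le a\in L^\infty(\S^d)$ with no interior; but the lemma is stated in exactly that generality, and the Zoll case later relies on it with $a=1_{\Sigma^+}$ only after an analogous reduction. The paper avoids UCP entirely: for the limiting solution $v$ with $a\pt v=0$, one forms the time averages $w_{\lambda'}(T)=\frac1T\int_0^T\pt v(t)e^{-it\lambda'}\,\dt$, which converge in $L^2$ to the individual eigenmode components $i\lambda'v_{\lambda'}^{\pm}$; since $aw_{\lambda'}=0$ and the nodal set of a (finite sum of) eigenfunctions has measure zero, each mode vanishes whenever $a\not\equiv 0$. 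Replacing your Holmgren step by this soft spectral argument (together with the mean-zero normalization) closes the proof in the stated generality.
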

\begin{proof}
Let $u$ solve~\eqref{eq:undamped-wave-eq}. We decompose $u=v+w$ such that
\begin{equation*}
\begin{cases}
(\pt^{2}+\L)v=0, & (v,\pt v)_{t=0}=(u_{0},u_{1});\\
(\pt^{2}+\L)w=\frac{(d-1)^{2}}{4} u, & (w,\pt w)_{t=0}=(0,0).
\end{cases}
\end{equation*}
Now that $ a $ observes~\eqref{eq:equation-L-S^d}, for some $ T > 0 $,
\begin{align*}
E(u,0)=E(v,0) & \lesssim \int_{0}^{T}\int_{\S^{d}}a|\pt v|^{2}\dx\,\dt 
\lesssim\int_{0}^{T}\int_{\S^{d}}a|\pt u|^{2}\dx\,\dt 
+ \int_{0}^{T}\int_{\S^{d}}a|\pt w|^{2}\dx\,\dt.
\end{align*}
By Duhamel's formula, $ \pt w(t) = \frac{(d-1)^{2}}{4} \int_{0}^{t}\cos\big((t-s)\sqrt{\L}\big)u(s)\,\ds$. Then we use the boundedness $ \| \cos(t\sqrt{\L}) \|_{L^2\to L^2} \le 1 $, and the a priori estimate~\eqref{eq:estimate-a-priori-wave}, to obtain
\begin{equation*}
\|\pt w(t)\|_{L^{2}}^{2} 
\lesssim\int_{0}^{T}\|u(s)\|_{L^{2}}^{2}\ds
\lesssim \|(u_{0},u_{1})\|_{L^{2}\times H^{-1}}^{2}.
\end{equation*}
Combine the inequalities above, we obtain a weak observability,
\begin{equation*}
E(u,0)\lesssim \int_{0}^{T}\int_{\S^{d}}a|\pt u|^{2}\dx\,\dt + \|(u_{0},u_{1})\|_{L^{2}\times H^{-1}}^{2}.
\end{equation*}
Then it is a classical argument of uniqueness-compactness due to Bardos-Lebeau-Rauch~\cite{B-L-R} which allows us to remove the compact remainder term $\|(u_{0},u_{1})\|_{L^{2}\times H^{-1}}^{2}$ and obtain the (strong) observability. This amounts to prove by contradiction and extract a subsequence of solutions of~\eqref{eq:undamped-wave-eq} which violates the observability, but converges strongly in the energy norm due to the compactness given by the weak observability. This gives us a solution to~\eqref{eq:undamped-wave-eq} with non vanishing energy (the energy is now conserved in time because there is no damping term in~\eqref{eq:undamped-wave-eq}), say $ v $, such that $ a \pt v = 0 $. Then we conclude by showing that, for~$ a \not\equiv 0 $, such solution does not exist (the only solution to~\eqref{eq:undamped-wave-eq} with $ a \pt v = 0 $ must be constant, and hence with zero energy). For more details, see the proof of Lemma~\ref{LEM::Reduction-Step-2}.
\end{proof}

\begin{lem}
\label{lem:Observability-Spherical-Harmonics}
If~$ a $ observes the spherical harmonics, then~$ a $ observes~\eqref{eq:equation-L-S^d}.
\end{lem}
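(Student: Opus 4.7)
The plan is to choose $T = 2\pi$ and exploit the orthogonality in $L^2([0,2\pi])$ of the time oscillators $\{e^{\pm it \omega_n}\}_{n \ge 0}$, with $\omega_n = n + \tfrac{d-1}{2}$, appearing in the Fourier expansion~\eqref{eq:solution-L-Fourier}. This reduces the space-time observability estimate to a sum of spherical-harmonic observability bounds, one per frequency~$n$.

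Concretely, for $d \ge 2$, differentiating~\eqref{eq:solution-L-Fourier} in time yields
\[
\pt u = i \sum_{n \ge 0} \omega_n \bigl( e^{it \omega_n} u_n^+ - e^{-it \omega_n} u_n^- \bigr).
\]
For all $n, m \ge 0$, $\omega_n - \omega_m = n - m \in \Z$ and $\omega_n + \omega_m = n + m + d - 1 \in \Z_{\ge 1}$, so $\{e^{\pm it \omega_n}\}_{n \ge 0}$ is pairwise orthogonal in $L^2([0,2\pi])$. Expanding $|\pt u|^2$, all time-integrated cross terms vanish and
\[
\int_0^{2\pi} \int_{\S^d} a |\pt u|^2 \, \dx \, \dt = 2\pi \sum_{n \ge 0} \omega_n^2 \int_{\S^d} a \bigl(|u_n^+|^2 + |u_n^-|^2\bigr) \, \dx.
\]
I then apply the assumed observability of spherical harmonics~\eqref{eq:ineq-observability-spherical-harmonics} to each $u_n^\pm \in E_n$, bounding the right-hand side from below by $2\pi C^2 \sum_n \omega_n^2 (\|u_n^+\|_{L^2}^2 + \|u_n^-\|_{L^2}^2)$. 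Using the identities $u_n^+ + u_n^- = u_n^0$ and $i\omega_n(u_n^+ - u_n^-) = u_n^1$, a short parallelogram-law computation gives $2\omega_n^2(\|u_n^+\|_{L^2}^2 + \|u_n^-\|_{L^2}^2) = \omega_n^2 \|u_n^0\|_{L^2}^2 + \|u_n^1\|_{L^2}^2$; summing over~$n$ yields $\|\sqrt{\L}\, u_0\|_{L^2}^2 + \|u_1\|_{L^2}^2 \ge \|\nabla u_0\|_{L^2}^2 + \|u_1\|_{L^2}^2 = 2 E(u, 0)$, where the inequality uses $\L \ge -\Delta_d$. Combined, these estimates give~\eqref{eq:ineq-observability} with $T = 2\pi$.

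For $d = 1$, the non-oscillatory mode $u_0^0 + u_0^1 t$ must be treated separately: its time derivative is the constant-in-$t$ function $u_0^1$, which is $L^2([0,2\pi])$-orthogonal to all the remaining $e^{\pm itn}$ with $n \ge 1$ and contributes $2\pi \int_{\S^1} a |u_0^1|^2 \, \dx$ on the left-hand side; since constants are spherical harmonics of degree $0$, \eqref{eq:ineq-observability-spherical-harmonics} applied to $u_0^1 \in E_0$ controls the corresponding kinetic term. The only subtle point in the entire argument is the time orthogonality for even~$d$, where $\omega_n$ is a half-integer; the key is that the mixed frequency $\omega_n + \omega_m = n + m + d - 1$ remains a positive integer, so $\int_0^{2\pi} e^{it(\omega_n + \omega_m)} \, \dt = 0$ as needed, and the orthogonality goes through.
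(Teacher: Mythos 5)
Your proof is correct and follows essentially the same route as the paper's: take $T=2\pi$, use the pairwise orthogonality of $\{e^{\pm it(n+\frac{d-1}{2})}\}$ in $L^2([0,2\pi])$ to decouple time from space, apply the spherical-harmonic observability to each $u_n^\pm\in E_n$, and recover $E(u,0)$ from $u_n^\pm$ via the relations with $u_n^0,u_n^1$. Your explicit parallelogram identity and the remark on the half-integer frequencies for even $d$ are just cleaner write-ups of steps the paper leaves implicit.
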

\begin{proof}
We only prove the lemma for~$ d \ge 2 $, the proof for~$ d = 1 $ is almost the same.

We set $T=2\pi$, and use Fubini's theorem, the explicit formula for solutions~\eqref{eq:solution-L-Fourier}, the orthogonality of the family $\{e^{\pm it(n+\frac{d-1}{2})}\}_{n\in\N}$ in $L^2([0,2\pi])$, the observability~\eqref{eq:ineq-observability-spherical-harmonics}, and the characterization of Sobolev norms by spherical harmonics,
\begin{align*}
& \phantom{{}={}} \int_{0}^{2\pi}\int_{\S^d} a|\pt u|^{2}\dx\,\dt \\
& = \int_{\S^{d}} a(x) \int_{0}^{2\pi} \Big| \sum_{n\ge 0}\big(n+\tfrac{d-1}{2}\big)\big(e^{it(n+\frac{d-1}{2})}u_{n}^{+}(x)-e^{-it(n+\frac{d-1}{2})}u_{n}^{-}(x)\big) \Big|^{2}\dt\,\dx  \\
& = \int_{\S^d}a(x)\sum_{n\ge 0}\big(n+\tfrac{d-1}{2}\big)^{2}\big(|u^{+}_{n}(x)|^{2}+|u^{-}_{n}(x)|^{2}\big)\,\dx  \\
& \gtrsim \int_{\S^d}\sum_{n\ge 0}\big(n+\tfrac{d-1}{2}\big)^{2}\big(|u^{+}_{n}(x)|^{2}+|u^{-}_{n}(x)|^{2}\big)\,\dx  \\
& \gtrsim \int_{\S^d}\sum_{n\ge 0}\big(n+\tfrac{d-1}{2}\big)^{2}\big(|u^{+}_{n}(x)+u^-(x)|^{2}+|u^+(x)-u^{-}_{n}(x)|^{2}\big)\,\dx  \\
& \gtrsim \int_{\S^d}\sum_{n\ge 0} \big(n+\tfrac{d-1}{2}\big)^{2} |u^0_n(x)|^2 
+ \int_{\S^d}\sum_{n\ge 0} |u^1_n(x)|^2 \dx \\
& \ge E(u,0).
\end{align*}
\end{proof}

Then we finish the proof of Theorem~\ref{thm:Lebeau} by showing that~$ a(x) = 1_{\S^d_+}(x) $ observes the spherical harmonics.
\begin{prop}
On $ \S^d $, $ a(x) = 1_{\S^d_+}(x) $ observes the spherical harmonics.
\end{prop}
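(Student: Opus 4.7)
The plan is to exploit the crucial parity property of spherical harmonics established in Lemma~\ref{LEM::Spherical-Harmonics}: if $v \in E_n$, then $v(-x) = (-1)^n v(x)$, so $|v(-x)|^2 = |v(x)|^2$ pointwise. Combined with the fact that the antipodal map $\sigma\colon x \mapsto -x$ is an isometry of $\S^d$ mapping $\S^d_+$ bijectively onto $\S^d_-$, this reduces the observability to an elementary symmetry argument.

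Concretely, I would split
\begin{equation*}
\|v\|_{L^2(\S^d)}^2 = \int_{\S^d_+} |v(x)|^2\,\dx + \int_{\S^d_-} |v(x)|^2\,\dx,
\end{equation*}
noting that the equator has measure zero and can be ignored. Changing variables $y = -x$ in the second integral and using that $\sigma$ preserves the Riemannian volume, together with the parity identity above, gives
\begin{equation*}
\int_{\S^d_-} |v(x)|^2\,\dx = \int_{\S^d_+} |v(-y)|^2\,\dy = \int_{\S^d_+} |v(y)|^2\,\dy.
\end{equation*}
Therefore $\|v\|_{L^2(\S^d)}^2 = 2 \int_{\S^d_+} |v(x)|^2\,\dx = 2 \|a^{1/2} v\|_{L^2(\S^d)}^2$, yielding the observability inequality~\eqref{eq:ineq-observability-spherical-harmonics} with the explicit constant $C = 1/\sqrt{2}$.

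There is really no obstacle here; the only subtlety is the conceptual point that the asymmetry between the continuous case (Theorem~\ref{thm:GCC-and-Energy-Decay-of-DW}) and the $L^\infty$ case comes precisely from the fact that the discontinuous weight $1_{\S^d_+}$ interacts so cleanly with the antipodal symmetry of $\S^d$, which forces every eigenfunction of $-\Delta_d$ to distribute its $L^2$ mass equally between the two hemispheres. This feature is special to the round sphere with its antipodal involution; the analogous argument will not be available on a general Zoll surface of revolution, which is why the proof of Theorem~\ref{thm:Stability-Zoll-non-GCC} will require the more delicate microlocal analysis near the equator foreshadowed in Remark~\ref{remark:local-geometry-equator}.
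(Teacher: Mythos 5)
Your argument is exactly the paper's: you use the parity $v(-x)=(-1)^n v(x)$ from Lemma~\ref{LEM::Spherical-Harmonics} together with the antipodal isometry to show the $L^2$ mass splits equally between the two hemispheres, obtaining the same explicit constant $C=1/\sqrt{2}$. The proposal is correct and matches the paper's proof.
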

\begin{proof}
This comes easily from the symmetry properties of spherical harmonics stated in Lemma~\ref{LEM::Spherical-Harmonics}. Indeed, if $v\in E_{n}$, then $ v(-x) = (-1)^n v(x) $ implies that 
\begin{equation*}
\|v\|_{L^2(\S^d_+)} = \|v\|_{L^2(S^d_-)},
\end{equation*}
whence the observability
\begin{equation*}
\|a^{1/2}v\|_{L^2(\S^d)}=\|v\|_{L^2(\S^d_{+})}=\tfrac{1}{\sqrt{2}}\|v\|_{L^2(\S^d)}.
\end{equation*}
\end{proof}

\subsection{Strategy of Proof}

\subsubsection{Proof of Theorem~\ref{thm:Lebeau}}

We first analyse the proof of Theorem~\ref{thm:Lebeau} presented above, which consists of the following 4~steps.

\paragraph{\textbf{Step 1}}
Reduce the strong stabilization of the damped wave equation~\eqref{eq:Damped-Wave-Equation} to the observability~\eqref{eq:ineq-observability} of the undamped wave equation~\eqref{eq:undamped-wave-eq}. This is a classical argument.

\paragraph{\textbf{Step 2}}
Reduce the observability of the undamped wave equation~\eqref{eq:undamped-wave-eq} to the observability of the perturbed wave equation~\eqref{eq:equation-L-S^d}. This perturbation uses essentially the fact that the spectrum of the spherical Laplacian is distributed near squares of an arithmetic sequence $ \{(n+\frac{d-1}{2})^2\}_{n\in\N} $. In fact, the spectrum is exactly of distance~$ \frac{(d-1)^2}{4} $ away from this sequence. Therefore, by adding to~$ -\Delta_d $ the constant~$ \frac{(d-1)^2}{4} $, we obtain an operator~$ \L $, the spectrum of whose square root is exactly the arithmetic sequence $ \{n+\frac{d-1}{2}\}_{n\in\N} $.

\paragraph{\textbf{Step 3}}
Reduce the observability of the perturbed wave equation~\eqref{eq:equation-L-S^d} to the observability of spherical harmonics, that is~\eqref{eq:ineq-observability-spherical-harmonics}.
To do so, we solve~\eqref{eq:equation-L-S^d} explicitly with Fourier series (that is, decomposition in spherical harmonics), and use the orthogonality of the time factors 
\begin{equation*}
\{e^{\pm it\lambda}\}_{\lambda \in \spec(\sqrt{\L})} = \{e^{\pm it(n+\frac{d-1}{2})}\}_{n\in\N}
\end{equation*}
in~$ L^2([0,2\pi]) $ to decouple the space and time variables. In this way, the time variable can be omitted, and we are left only to consider the spherical harmonics.

\paragraph{\textbf{Step 4}}
Prove the observability of spherical harmonics. We use the symmetry of spherical harmonics to show that the $ L^2 $~norm of a spherical harmonic is equally distributed on upper and lower hemispheres.

\subsubsection{Proof of Theorem~\ref{thm:Stability-Zoll-non-GCC}}
We will follow this strategy to prove Theorem~\ref{thm:Stability-Zoll-non-GCC}, but with the following modifications.

\paragraph{\textbf{Step 1}} 
Same as above.

\paragraph{\textbf{Step 2}}
The only (slight) difference is the definition of the perturbed wave equation, because the perturbation~$ \L $ of the Laplacian-Beltrami operator~$ -\Delta $ on a Zoll surface of revolution~$ \Sigma $ can not be so simply defined as~\eqref{eq:def-L-S^d}. To define~$ \L $ in this situation, we recall Proposition~\ref{prop:spectral-zoll} and Remark~\ref{remark:spectral-zoll-dim-2}. For~$ \lambda \ge 0 $ such that~$ \lambda^2 \in \spec(-\Delta) $, we let~$ E_\lambda $ denote the (minus) Laplacian eigenspace of eigenvalue~$ \lambda^2 $, and set for $ n \ge 0 $ the linear space
\begin{equation*}
\tilde{E}_n = \bigoplus_{\lambda^2 \in I_n} E_\lambda.
\end{equation*}
Then~$ \L $ is defined by prescribing its action on each~$ \tilde{E}_n $,
\begin{equation*}
\L|_{\tilde{E}_{n}}=(n+1/2)^2\,\Id_{\tilde{E}_{n}}.
\end{equation*}
Therefore $ \tilde{E}_n $ are eigenspaces of~$ \L $, whose elements will be called $ \L $-eigenfunctions, and
\begin{equation*}
\spec(\sqrt{\L}) \subset \{ n + 1/2 : n \in \N \}.
\end{equation*}
Moreover, by~\eqref{eq:interval-spectrum-zoll}, if we set~$ K = \Delta + \L $, then $ \|K\|_{\tilde{E}_n\to\tilde{E}_n} \le A $, where $ \tilde{E}_n $ is equipped with the $ L^2(\Sigma) $ norm. Consequently, by the orthogonal direct sum decomposition $ L^2(\Sigma) = \oplus_{n \ge 0} \tilde{E}_n $, we show that~$ K $ is a bounded operator on $ L^2(\Sigma) $,
\begin{equation}
\label{eq:K-norm}
\|K\|_{L^2(\Sigma) \to L^2(\Sigma)} \le A,
\end{equation}
which plays the same role as the constant~$ \frac{(d-1)^2}{4} $ in the spherical case. Then the same argument shows that the observability for~\eqref{eq:undamped-wave-eq} can be deduced from the observability of the following perturbed wave equation,
\begin{equation}
\label{eq:equation-L-zoll}
\begin{cases}
(\partial_{t}^{2}+\L)u=0, & \mathrm{in\ }\mathcal{D}'(\R\times\Sigma);\\
(u,\pt u)_{t=0}=\left(u_{0},u_{1}\right), & \in H^{1}(\Sigma)\times L^2(\Sigma).
\end{cases}
\end{equation}

\paragraph{\textbf{Step 3}}
Reduce the observability of~\eqref{eq:equation-L-zoll} to the observability of $ \L $-eigenfunctions, that is to say, for some~$ C>0 $, and every $ u \in \cup_{n\ge 0} \tilde{E}_n $,
\begin{equation}
\label{eq:observability-L-eigenfunction}
\|a^{1/2} u\|_{L^2(\Sigma)} \ge C \|u\|_{L^2(\Sigma)}.
\end{equation}
Recall that in our case, $ a(x) = 1_{\Sigma^+}(x) $. To do this, we use the orthogonality of the time factors $ \{e^{\pm it(n+1/2)}\}_{n\in\N} $ in $ L^2([0,2\pi]) $, which comes with luck from the fact that $ \beta = 2 $ on dimension~$ 2 $ (recall Remark~\ref{remark:spectral-zoll-dim-2}), so that $ n + \beta/4 = n + 1/2 $. However, this fact is not necessary, for we can always use Ingham's inequality (see the original work of Ingham~\cite{Ingham}, see also~\cite{Zuazua} for its application in the theory of control).

\paragraph{\textbf{Step 4}}
Prove the observability of $\L$-eigenfunctions~\eqref{eq:observability-L-eigenfunction}. Unfortunately, the simple proof for the observability of spherical harmonics does not apply, because neither the $\L$-eigenfunctions nor the Laplacian eigenfunctions on~$ \Sigma $ share such strong symmetries as the spherical harmonics. However, we observe that, by the definition of~$ \L $, the $ \L $-eigenfunctions are quasi-modes. Indeed, let $ u \in \tilde{E}_n $, normalized in $ L^2 $~norm, that is, $ \|u\|_{L^2(\Sigma)} = 1 $; introduce the semiclassical parameter $ h = (n + 1/2)^{-1} $, then by~\eqref{eq:K-norm}
\begin{equation*}
(-h^2 \Delta + 1) u = - h^2 K u = O(h^2)_{L^2}.
\end{equation*}
This suggests a proof by contradiction and analyzing the semiclassical defect measures (see Gérard~\cite{Gerard-Semiclassical-Measure}, Gérard-Leichtnam~\cite{G-L}, Lions-Paul~\cite{Lions-Wigner}, see also~\cite{Burq-Bourbaki}) of a sequence of $ \L $-eigenfunctions, which violates the observability, that is, $ \|1_{\Sigma^+}u\|_{L^2(\Sigma)} = o(1) $. Such argument is originally due to G.~Lebeau, dating back to his work~\cite{Lebeau} which uses the propagation of (classical) defect measures; see for example~\cite{Burq-1,Zworski} for the semiclassical setting.

A classical argument shows that such semiclassical defect measure, say~$ \mu $, is supported on the unit cotangent bundle $ S^*\Sigma $, vanishes on~$ T^*\Sigma^+ $, and is invariant by the (co-)geodesic flow. Therefore~$ \mu $ carries no mass on the union of geodesics which enter~$ \Sigma^+ $. Recall that on~$ \Sigma $, every geodesic enter~$ \Sigma^+ $ within the period of the geodesic flow (which is, in our case, $ 2\pi $, by the normalization $ r(\ell_0) = 1$), except for a rogue one, the equator~$ \Gamma $. We are thus unable to close the routine argument as the $ \L $-eigenfunctions may concentrate on~$ \Gamma $ (a simple example is $ \Sigma = \S^2 $, where the spherical harmonics $ u_n(x,y,z) = (x + iy)^n $ will concentrate on the equator~$ z = 0 $ as $ n \to \infty $); but to conclude that
\begin{equation*}
\supp \mu \subset S^*\Sigma \cap \{\ell = \ell_0, \xi = 0 \} = \{(\ell_0,\varphi,0,\pm 1) : \varphi \in \S^1 \}.
\end{equation*}

To deal with this problem, we take a closer look at the concentration behavior near the equator. It suffices to show that the speed of concentration from each side of the equator is comparable, so that the $ L^2 $ norm of this sequence of $ \L $-eigenfunctions must be comparably distributed on each side as well, which contradicts to our hypothesis that the observability from the upper hemi-surface is violated by this sequence. Such idea is achieved by some proper scalings of the latitude coordinate~$ \ell $, and is closely related to the \textit{second microlocalization} along the equator, as illustrated by~\cite{B-G-Stabilization-Wave-Tori}. It is explicitly carried out as follows:

\begin{enumerate}[nosep,leftmargin=*]
\item First, to simplify some calculations, we will work on an isothermal coordinate on~$ \Sigma $. There exists an strictly increasing $ f \in C^\infty(\R) $ such that
\begin{equation*}
f'(x) = r(f(x)), \quad f(0) = \ell_0.
\end{equation*}
Then under the change of variable $ \ell = f(x) $, the north pole~$ N $, the south pole~$ S $ and the equator~$ \Gamma $ now respectively corresponds to~$ x = -\infty $, $ x = \infty $ and $ x = 0 $. Denoting for simplicity $ \rho = f' $, the metric~$ g $ now writes under the coordinates~$ (x,\varphi) $ as
\begin{equation*}
g = \rho(x) (\dx^2 + \d\varphi^2) = \big(1 - cx^2 + O(x^3)\big) (\dx^2 + \d\varphi^2),
\end{equation*}
where the positive constant~$ c $ is the same as in Remark~\ref{remark:local-geometry-equator}; and the Laplacian-Beltrami operator writes
\begin{equation}
\label{eq:Laplacian-x-varphi}
\Delta = \rho(x)^{-1} (\partial_x^2 + \partial_\varphi^2) = \big( 1 + cx^2 + O(x^3) \big) (\partial_x^2 + \partial_\varphi^2).
\end{equation}
We also remark that under these coordinates, 
\begin{equation*}
L^2(M) \simeq L^2(\rho^2\dx,\R) \otimes L^2(\d\varphi,\S^1).
\end{equation*}

\item On a general compact surface of revolution, $ -\Delta $ is invariant under rotation, and commutes with the infinitesimal generator of rotation, that is, $ D_\varphi = \frac{1}{i} \partial_\varphi $. We expect each Laplacian eigenspace to be a direct sum of $ D_\varphi $-eigenspaces. Indeed, on~$ \Sigma $, for $ \lambda^2 \in \spec(-\Delta) $, the following decomposition holds,
\begin{equation*}
E_\lambda = \bigoplus_{k \in \Z} e^{ik\varphi} A_{\lambda,k},
\end{equation*}
where $ A_{\lambda,k} $ consists of smooth functions of variable~$ x $, such that, whenever $ w \in A_{\lambda,k} $, we have $ w \in L^2(\rho^2 \dx,\R) $; and $ u(\varphi,x) = e^{ik\varphi} w(x) \in L^2(M) $ is a common eigenfunction of~$ -\Delta $ and~$ D_\varphi $,
\begin{equation*}
-\Delta u = \lambda^2 u, \quad D_\varphi u = k u.
\end{equation*}
By~\eqref{eq:Laplacian-x-varphi}, we have a second order differential equation for~$ w $,
\begin{equation}
\label{eq:equation-w-intro}
-\partial_x^2 w + k^2 w = \lambda^2\rho^2 w = \lambda^2 (1 - cx^2 + O(x^3)) w.
\end{equation}
It is known that the Laplacian eigenfunctions are smooth, in particular at the poles~$ N $ and~$ S $. This gives a boundary condition for~$ w $, 
\begin{equation*}
\lim_{|x| \to \infty} \partial_x^n w(x) = 0, \quad \mathrm{when} \quad  k \ne 0, n \in \N.
\end{equation*}
Consequently, up to a multiplicative constant, there exists at most one solution to~\eqref{eq:equation-w-intro}, which means
\begin{equation*}
\dim A_{\lambda,k} \le 1, \quad \mathrm{if\ } k \ne 0.
\end{equation*}
The case $ k = 0 $ poses no problem because as we have seen, $ \supp \mu \subset \{\theta = 1\} $, therefore the terms with $ 1 - h^2 k^2 \to 0 $ (therefore $ k \sim h^{-1} \to \infty $) contribute to almost all of the total mass. Now we set
\begin{equation*}
\tilde{A}_{n,k} = \bigoplus_{\lambda^2 \in I_n} A_{\lambda,k},
\end{equation*}
and obtain the decomposition for $ \L $-eigenspaces,
\begin{equation*}
\tilde{E}_n = \bigoplus_{k\in\Z} e^{ik\varphi} \tilde{A}_{n,k}.
\end{equation*}

\item Due to the orthogonality of the family $ \{e^{ik\varphi}\}_{k \in \Z} $ in $ L^2(\d\varphi,\S^1) $, we are left to prove the following observability, that for any sequence $ \{\tilde{w}_{n,k} \in \tilde{A}_{n,k} \}_{n \in \N,k \in \Z}  $, where the indexes appearing in the sequence satisfy $ 1 - h^2 k^2 = o(1) $ as $ n \to \infty $ (recall that $ h = (n+1/2)^{-1} $; such a sequence will be called \textit{admissible}, see Definition~\ref{def:admissible}), there exits some $ C > 0 $, such that for any $ \tilde{w}_{n,k} \in \tilde{A}_{n,k} $ in the sequence, we have
\begin{equation}
\label{eq:observability-w_n_k}
\|1_{x > 0} \tilde{w}_{n,k}\|_{L^2(\rho^2\dx)} \ge C \|\tilde{w}_{n,k}\|_{L^2(\rho^2\dx)}.
\end{equation}
The weight $ \rho^2 $ is of no importance as $ \tilde{w}_{n,k} $ concentrates on $ x = 0 $ (For a rigorous argument, we will use an Lithner-Agmon type estimate). In order to prove~\eqref{eq:observability-w_n_k}, we observe that~$ \tilde{w}_{n,k} $ satisfies a 1-dimensional stationary semiclassical Schr\"{o}dinger equation,
\begin{equation*}
(-h^2 \partial_x^2 + V) \tilde{w}_{n,k} = E \tilde{w}_{n,k} + O(h^2)_{L^2\to L^2} \tilde{w}_{n,k},
\end{equation*}
where $ V = 1 - \rho^2 = c x^2 + O(x^3) $ near $ x = 0 $, and $ E = 1 - h^2 k^2 $. Then we argue by contradiction and extract a sequence $ k = k(n) $ and set $ \tilde{w}_n = \tilde{w}_{n,k} $ which violates the observability, and treat separately two cases, $ E = O(h) $ and $ E \gg h $ (we can show that $ E \gtrsim -h^2 $).
\begin{enumerate}[nosep,leftmargin=*]
\item If $ E = O(h) $, then we use the scaling $ z = c^{1/4} h^{-1/2} x $ to obtain a classical Schr\"{o}dinger equation,
\begin{equation*}
(-\partial_z^2 + z^2 + O(h^{1/2})) \tilde{w}_{n} = (F + o(1)) \tilde{w}_{n},
\end{equation*}
for some $ 0 \le F \in \R $, and shows that $ \tilde{w}_{n} $ is close to an eigenfunction of the harmonic oscillator $ -\partial_z^2 + z^2 $, which is either an even functions or an odd function, whose mass are thus equally distributed on each side of the origin $ z=0 $.

\item If $ E \gg h $, then we use another scaling $ z = c^{1/2} E^{-1/2} $, $ \h = c^{1/2} E^{-1} h $, and obtain a semiclassical Schr\"{o}dinger equation, with a semiclassical parameter~$ \h = o(1) $,
\begin{equation*}
(-\h \partial_z^2 + z^2 + o(1)) \tilde{w}_{n} = \tilde{w}_{n} + o(\h)_{L^2\to L^2} \tilde{w}_{n}.
\end{equation*}
The ($ \h $-)semiclassical measure of $ \tilde{w}_{n} $ will be supported on the circle
\begin{equation*}
\{ (z,\zeta) \in T^*\R_z : z^2 + \zeta^2 = 1\},
\end{equation*}
and is invariant by rotation (which is induced by the Hamiltonian flow generated by the principal symbol $ z^2 + \zeta^2 $). So the mass of~$ \tilde{w}_n $ are also asymptotically equally distributed on each side of the origin~$ z=0 $.
\end{enumerate}
\end{enumerate}

\subsection*{Acknowledgement} This work which started as as a Mémoire of Master 2 of Université Paris-Sud, is finished under the guidance of Nicolas Burq, to whom the author owes great gratitude. The author would also express gratitude to Gilles Lebeau, who gave originally the proof of the spherical cases, based on which is this work only possible. Finally the author would like to thank the referees whose careful reading and useful comments lead to a significant improvement in the presentation of the paper.

\section{Proof of Theorem~\ref{thm:Stability-Zoll-non-GCC}}

\subsection{Geometry of Zoll Surfaces of Revolution}
\label{sub:Geometry-of-Zoll's-Surfaces}

Let~$ \Sigma $ be a Zoll surface of revolution, we recall some of its basic geometric properties, referring to the monograph of Besse~\cite{Besse}.

\subsubsection{Coordinates and Geodesics}

$ \Sigma $ is diffeomorphic to $\S^{2}$, and admits a parametrization by local coordinates described as follows. Recall that~$\S^1$ acts smoothly, faithfully, and isometrically on~$ \Sigma $, leaving exactly two points fixed, which are called the \textit{north pole} and the \textit{south pole}, denoted respectively by~$N$ and~$S$. Fix a geodesic~$\gamma_{0}$ from~$N$ to~$S$, parametrized by arc length. We assume that the total length of~$ \gamma_0 $ is equal to~$ \pi $, after a proper normalization. Then as~$\varphi $ varies in~$\S^{1}$, $\gamma_{\varphi}=\varphi\gamma_{0}$ varies among all geodesics joining~$N$ and~$S$, which are called the \textit{meridians}. The coordinates on $U=\Sigma\backslash\{N,S\}$ is given by 
\begin{equation*}
U \ni \gamma_{\varphi}(\ell) \mapsto (\ell,\varphi) \in {}]0,\pi[{} \times \S^{1}.
\end{equation*}
The coordinate patches near~$N$ and~$S$ are respectively $U_{N}=\{N\}\cup\{(\ell,\varphi):0\le \ell<\pi\}$, and $U_{S}=\{S\}\cup\{(\ell,\varphi):0<\ell\le\pi\}$. They are diffeomorphic
to the 2-dimensional open ball $B(0,\pi)$ via the usual polar coordinates. The Riemannian metric on~$U$ has the form 
\begin{equation*}
g=\d \ell^{2}+r(\ell)^{2}\d\varphi^{2},
\end{equation*}
where $ r(\ell) $ is the distance from the axis of rotation (recall Figure~\ref{fig:zoll-surface-of-revolution}). Then~$ \Sigma $ being a Zoll surface of revolution means that the criteria stated in Remark~\ref{remark:criteria-zoll} is satisfied. 
There is a well defined differential operator $D_{\varphi}$ on $\Sigma$. It is the differential operator with respect to the the vector fields~$X$ on~$\Sigma$ defined as follows: To each point $m=(\ell,\varphi)\in\Sigma\backslash\{N,S\}$, we associate the unit tangent vector $Y(m)\in T_{m}\Sigma$, tangent to the \textit{parallel}~$\S^1 m$ (that is, the orbit of the point~$ m $ generated by the actions of~$ \S^1 $), with direction given by the positive orientation of~$\S^{1}$. Letting $X(m)=r(\ell)Y(m)$, $X(N)=0$, $X(S)=0$, then~$ X $ defines a smooth tangent vector field on~$ \Sigma $. For $u\in C^{\infty}(\Sigma)$, we define 
\begin{equation*}
D_{\varphi}u=\frac{1}{i}\langle\mathrm{d}u,X\rangle.
\end{equation*}
On $U$, we simply have $D_{\varphi}=\frac{1}{i}\partial_{\varphi}$, with $\partial_{\varphi}$ being the differentiation with respect to~$\varphi$. Therefore $D_{\varphi}$ is symmetric and commutes with $\Delta$, at least in a formal way,
\begin{equation*}
[\Delta,D_{\varphi}]=0.
\end{equation*}

Then we state a proposition concerning the geodesics of~$ \Sigma $.
\begin{prop} 
\label{prop:Geometry-Zoll-Surface-Revolution}
Let~$ \Sigma $ be a Zoll surface of revolution, and let~$ r $ be prescribed as above.
\begin{enumerate}
\item Then $r:[0,\pi]\to[0,1]$ is smooth, with $r(0)=r(\pi)=0,\ r'(0)=1,\ r'(\pi)=-1,\ r''(0)=r''(\pi)=0$. There exists a unique $\ell_{0}\in {}]0,\pi[{}$ such that $r(\ell_{0})=1$. Furthermore, $r'(\ell_{0})=0$, $r''(\ell_{0})<0$. The curve $\Gamma = \{(\ell_0,\varphi) : \varphi \in \S^1 \}$ is a geodesic called the equator. 
\item Apart from the equator, every geodesic is contained between a pair of parallels $\{\ell=\ell_1\}$ and $\{\ell=\ell_2\}$ for some $ \ell_1 < \ell_0 < \ell_2 $, and contacts each of
the parallel exactly once. 
\end{enumerate}
\end{prop}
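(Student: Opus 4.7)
The plan is to derive both parts from classical surface-of-revolution computations, with the Zoll hypothesis entering only through the normal form recalled in Remark~\ref{remark:criteria-zoll}.

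For part~(1), I would start from the characterization $ g = (1 + h(\cos\theta))^2 \d\theta^2 + \sin^2\theta \d\varphi^2 $ on $ \S^2 $, with $ h $ smooth and odd on $ [-1,1] $ and $ h(\pm 1) = 0 $, and introduce the arc-length parameter $ \ell(\theta) = \int_0^\theta (1 + h(\cos s))\,\d s $. Oddness of $ h $ gives $ \ell(\pi) = \pi + \int_0^\pi h(\cos s)\,\d s = \pi $, so $ \ell $ is a smooth strictly increasing diffeomorphism of $ [0,\pi] $. Setting $ r(\ell) = \sin\theta(\ell) $ with $ \theta'(\ell) = 1/(1 + h(\cos\theta)) $, the identities $ r(0) = r(\pi) = 0 $ and $ r'(0) = 1,\ r'(\pi) = -1 $ follow immediately from the chain rule together with $ h(\pm 1) = 0 $. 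Differentiating once more, $ \theta''(\ell) = h'(\cos\theta)\sin\theta/(1+h(\cos\theta))^3 $ carries an explicit factor of $ \sin\theta $, so both terms of $ r''(\ell) = -\sin\theta\,(\theta')^2 + \cos\theta\,\theta'' $ vanish at $ \theta = 0, \pi $, giving $ r''(0) = r''(\pi) = 0 $. Existence and uniqueness of $ \ell_0 $ with $ r(\ell_0) = 1 $ correspond to $ \theta = \pi/2 $ being the unique maximizer of $ \sin\theta $ on $ [0,\pi] $, and one more explicit computation at that point yields $ r'(\ell_0) = 0 $ and $ r''(\ell_0) < 0 $.

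That $ \Gamma $ is a geodesic then follows from the standard fact that for a metric $ \d\ell^2 + r(\ell)^2 \d\varphi^2 $, the parallel $ \{\ell = \ell_*\} $ is a geodesic if and only if $ r'(\ell_*) = 0 $: the Euler--Lagrange $ \ell $-equation reduces along such a curve to $ r(\ell_*) r'(\ell_*) = 0 $, which forces $ r'(\ell_0) = 0 $ since $ r(\ell_0) = 1 \ne 0 $. Unit-speed parametrization then gives $ \varphi' = 1/r(\ell_0) = 1 $, so the equator closes up with period $ 2\pi $.

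For part~(2), I would invoke Clairaut's relation, which is available on any surface of revolution. The Killing field $ \partial_\varphi $ yields, along any unit-speed geodesic $ \gamma(s) = (\ell(s),\varphi(s)) $, conservation of $ c := r(\ell(s))^2 \varphi'(s) $; combined with $ \ell'(s)^2 + r(\ell(s))^2 \varphi'(s)^2 = 1 $, this gives $ \ell'(s)^2 = 1 - c^2/r(\ell(s))^2 $, forcing $ r(\ell(s)) \ge |c| $. If $ |c| = 1 $, then $ r(\ell(s)) \ge \max r $ forces $ \ell \equiv \ell_0 $, so $ \gamma = \Gamma $. Otherwise $ |c| < 1 $, and by the shape of $ r $ from part~(1) the set $ \{r \ge |c|\} $ is a closed interval $ [\ell_1,\ell_2] $ with $ \ell_1 < \ell_0 < \ell_2 $, confining $ \gamma $ to this strip. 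The geodesic equation further yields $ \ell''(s) = r(\ell) r'(\ell)\,\varphi'(s)^2 $, nonzero at $ \ell_1, \ell_2 $ since $ r(\ell_i) > 0 $ and $ r'(\ell_i) \ne 0 $; hence $ \ell $ is strictly monotonic between consecutive turning points and touches each bounding parallel exactly once per oscillation. The main obstacle is purely computational: careful bookkeeping of derivatives of $ r $ at $ \theta = 0, \pi $ from the Zoll normal form. Once those identities are in hand, everything else reduces to Clairaut's relation and the qualitative shape of $ r $.
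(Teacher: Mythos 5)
The paper gives no proof of this proposition at all: it simply quotes these facts from Besse's monograph (Lemma~4.9 and the surrounding material), so you are supplying an argument where the author supplies a citation. Your treatment of part~(1) is correct and complete: deriving $r(\ell)=\sin\theta(\ell)$ from the normal form of Remark~\ref{remark:criteria-zoll}, using oddness of $h$ to get $\ell(\pi)=\pi$, and reading off the boundary derivatives and the unique nondegenerate maximum at $\theta=\pi/2$ is exactly the right computation, and the Clairaut confinement argument in part~(2) correctly yields $|c|\le 1$, the identification of $|c|=1$ with the equator, and the strip $\{r\ge|c|\}=[\ell_1,\ell_2]$ with $\ell_1<\ell_0<\ell_2$ (using that $r$ is strictly increasing then decreasing, which you get from part~(1)). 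Two small points you should make explicit: that the geodesic actually \emph{attains} the bounding parallels in finite time, which follows from $r'(\ell_i)\ne 0$ making $\int \d\ell\,(1-c^2/r(\ell)^2)^{-1/2}$ converge at the turning points; and the degenerate meridian case $c=0$, where the ``parallels'' are the poles.

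The genuine gap is the clause ``contacts each of the parallels exactly once.'' Your argument shows only that $\ell$ oscillates monotonically between consecutive turning points, which is true on \emph{every} surface of revolution; on a generic such surface the closed-up geodesic (if it closes at all) touches each bounding parallel many times per period. ``Exactly once'' means once per period of the closed geodesic, i.e.\ that the azimuthal advance over one full oscillation in $\ell$ equals exactly $2\pi$:
\begin{equation*}
\Delta\varphi(c)=2\int_{\ell_1(c)}^{\ell_2(c)}\frac{c\,\d\ell}{r(\ell)\sqrt{r(\ell)^2-c^2}}=2\pi
\quad\text{for all } 0<|c|<1.
\end{equation*}
This is precisely where the Zoll hypothesis must enter part~(2); it is established in Besse (the computation around Theorem~4.13 / Corollary~4.16, again using that $h$ is odd), and nothing in your Clairaut bookkeeping substitutes for it. Note, however, that the only consequence the paper actually uses is the Corollary that every non-equatorial geodesic enters $\Sigma^+=\{\ell>\ell_0\}$, and for that your argument (confinement to $[\ell_1,\ell_2]$ with $\ell_2>\ell_0$ plus attainment of $\ell_2$) does suffice.
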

\begin{cor}
From this proposition, every geodesic of~$\Sigma$ except for the equator~$\Gamma$ enters the upper hemi-surface $\Sigma^{+}=\{\ell>\ell_{0}\}$.
\end{cor}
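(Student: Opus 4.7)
The plan is to obtain this corollary as a direct logical consequence of part~(2) of Proposition~\ref{prop:Geometry-Zoll-Surface-Revolution}, so no substantive new argument is required. Let $\gamma$ be any geodesic of~$\Sigma$ distinct from the equator~$\Gamma$. By part~(2) of the proposition, there exist constants $\ell_1 < \ell_0 < \ell_2$ such that $\gamma$ is contained in the closed annular region $\{\ell_1 \le \ell \le \ell_2\}$ and, in particular, contacts the parallel $\{\ell = \ell_2\}$. Pick a point $p \in \gamma \cap \{\ell = \ell_2\}$ guaranteed by this contact statement.

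Since $\ell_2 > \ell_0$, the point $p$ satisfies $\ell(p) = \ell_2 > \ell_0$, hence $p \in \Sigma^+ = \{\ell > \ell_0\}$. Therefore~$\gamma$ meets~$\Sigma^+$, which is exactly the conclusion.

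There is no real obstacle here: the only subtlety would be whether "contacts" in the statement of Proposition~\ref{prop:Geometry-Zoll-Surface-Revolution}(2) should be read as tangential contact (so that the contact point itself lies on the parallel but may belong to $\partial \Sigma^+$ rather than to the open set~$\Sigma^+$), but since the parallel in question is $\{\ell = \ell_2\}$ with $\ell_2$ strictly greater than $\ell_0$, the point $p$ is strictly interior to $\Sigma^+$ in the $\ell$-coordinate, so the distinction is immaterial. The corollary follows.
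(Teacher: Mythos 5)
Your proof is correct and is exactly the argument the paper intends: the corollary is stated as an immediate consequence of Proposition~\ref{prop:Geometry-Zoll-Surface-Revolution}(2), and your observation that the contact point with the parallel $\{\ell=\ell_2\}$, $\ell_2>\ell_0$, lies in $\Sigma^+$ is the whole content. Nothing is missing.
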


To simplify later calculations, we will work on an isothermal coordinate defined on~$ U $ as follows. Let $ f \in C^\infty(\R) $ be the solution to the following first order ordinary differential equation,
\begin{equation*}
f'(x)=r(f(x)),\quad f(0)=\ell_0.
\end{equation*}
It is not difficult to see that (we refer to~\cite{Arnold}) 
\begin{equation*}
L<f<R,\quad  \lim_{x\to-\infty}f(x)=0,\quad \lim_{x\to\infty} f(x)=\pi.
\end{equation*}
Therefore $f$ defines a diffeomorphism $\R\simeq{}]0,\pi[{}$, with the equator now being $ \Gamma = \{ x = 0 \} $. Set $ x = f^{-1}(\ell) $, then the coordinates $(x,\varphi)$ are isothermal, indeed, 
\begin{equation*}
g=f'(x)^{2}\mathrm{d}x^{2}+r(f(x))^{2}\mathrm{d}\varphi^{2}
=\rho(x)^{2}(\mathrm{d}x^{2}+\mathrm{d}\varphi^{2}),
\end{equation*}
where $\rho(x):=r(f(x))=f'(x)$. We have $\rho \in {}]0,1]$, and $\rho(x)<1$ except for $x=0$, where $\rho(0)=1,\ \rho'(0)=0,\ \rho''(0)<0$. We also have, under these coordinates,
\begin{equation}
\label{eq:Ltwo-decomposition-Sigma}
L^2(\Sigma) = L^2(\rho^2 \dx,\R) \otimes L^2(\d\varphi,\S^1),
\end{equation}
and the Laplacian-Beltrami operator takes a simple form,
\begin{equation}
\label{Laplacian-Isothermal}
\Delta=\frac{1}{\rho(x)^{2}}(\partial_{x}^{2}+\partial_{\varphi}^{2}).
\end{equation}

\subsubsection{Laplacian Spectrum and Eigenfunctions}
\label{sub:Laplacian-Eigenfunctions-on-Surfaces-of-Revolution}

Recall that for some $ A > 0 $,
\begin{equation*}
\spec(-\Delta) \subset \bigsqcup_{n \ge 0} I_n, 
\quad \mathrm{with\ } I_{n} \subset {}](n+1/2)^{2}-A,(n+1/2)^{2}+A[{}.
\end{equation*}
For~$ \lambda \ge 0 $ such that~$ \lambda^2 \in \spec(-\Delta) $, we let~$ E_\lambda $ denote the (minus) Laplacian eigenspace of the eigenvalue~$ \lambda^2 $, and set for $ n \ge 0 $ the linear space
\begin{equation*}
\tilde{E}_n = \bigoplus_{\lambda^2 \in I_n} E_\lambda.
\end{equation*}
We define a linear (unbounded) operator~$ \L $ by a compact perturbation of~$ -\Delta $ such that
\begin{equation*}
\spec(\L) \subset \{(n+1/2)^2 : n \in \N \}.
\end{equation*}
Indeed, let $ \Pi_n : L^2(\Sigma) \to \tilde{E}_n $ denote the orthogonal projection, then we formally define
\begin{equation}
\label{eq:def-L}
\L = \sum_{n \ge 0} (n+1/2)^2 \Pi_n.
\end{equation}

Next we study the structure of~$ E_\lambda $. Since $-\Delta$ commutes with $D_{\varphi}$, it is natural to expect an orthogonal decomposition of~$ E_\lambda $ into $ D_\varphi $ eigenspaces. The following proposition is inspired by Beekmann~\cite{Beekmann}.
\begin{prop}
\label{prop:Laplacian-Eigenfunction-Evolution-Surface}
On each~$ E_\lambda $, we have a direct sum decomposition,
\begin{equation*}
E_\lambda = \bigoplus_{k \in \Z} e^{ik\varphi} A_{\lambda,k},
\end{equation*}
where $ A_{\lambda,k} \subset C^\infty(\R) $ is the solution space to
\begin{equation}
\label{1D-Laplacian-Regular}
-\partial_{x}^{2}w+k^{2}w=\lambda^{2}\rho^{2}w,
\end{equation}
with boundary conditions $ \lim_{|x| \to \infty} \partial_x^n w(x) = 0 $ for $ n \in \N $ and $ k \ne 0 $. In particular,
\begin{equation*}
\dim A_{\lambda,k} \le 1, \quad \mathrm{if} \quad k \ne 0.
\end{equation*}
If $ u(x,\varphi) = e^{ik\varphi} w(x) \in e^{ik\varphi} A_{\lambda,k} $, then
\begin{equation*}
-\Delta u = \lambda^2 u, \quad D_\varphi u = k u.
\end{equation*}
That is, $ e^{ik\varphi} A_{\lambda,k} $ are eigenspaces of~$ D_\varphi $, and the decomposition is thus orthogonal.
\end{prop}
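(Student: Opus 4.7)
The plan is to diagonalize the pair of commuting operators $-\Delta$ and $D_\varphi$ on the finite-dimensional eigenspace $E_\lambda$, then reduce the resulting eigenvalue problem to a one-dimensional ODE via separation of variables in the isothermal coordinate~$(x,\varphi)$. By elliptic regularity, $E_\lambda \subset C^\infty(\Sigma)$, and since the isometric $\S^1$-action on~$\Sigma$ has period~$2\pi$, the self-adjoint generator $D_\varphi$ preserves $E_\lambda$ and has integer spectrum. Spectral decomposition then gives $E_\lambda = \bigoplus_{k \in \Z} E_{\lambda,k}$, where $E_{\lambda,k} = \ker(D_\varphi - k) \cap E_\lambda$. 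On the chart $U = \R_x \times \S^1_\varphi$ we have $D_\varphi = -i\partial_\varphi$, so each $u \in E_{\lambda,k}$ takes the form $u(x,\varphi) = e^{ik\varphi} w(x)$ with $w \in C^\infty(\R)$; substituting this into $-\Delta u = \lambda^2 u$ via~\eqref{Laplacian-Isothermal} yields the ODE~\eqref{1D-Laplacian-Regular} for~$w$. Orthogonality of the summands $e^{ik\varphi} A_{\lambda,k}$ for distinct~$k$ then follows from~\eqref{eq:Ltwo-decomposition-Sigma} and from the orthogonality of the basis $\{e^{ik\varphi}\}_{k \in \Z}$ in $L^2(\d\varphi,\S^1)$.

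It remains to identify $E_{\lambda,k}$ with $e^{ik\varphi} A_{\lambda,k}$, that is, to characterize which solutions $w$ of~\eqref{1D-Laplacian-Regular} correspond to a function that is smooth on all of~$\Sigma$ when $k \ne 0$. The poles $N$ and $S$ correspond to $x \to -\infty$ and $x \to +\infty$ respectively. Since $r(\ell) = \ell + O(\ell^3)$ near $\ell = 0$, solving $f'(x) = r(f(x))$ yields the asymptotic $f(x) \sim c_N e^{x}$ as $x \to -\infty$ for some $c_N > 0$, and symmetrically near the other pole. Now a smooth function on a surface at a point of rotational symmetry has a Fourier expansion in $\varphi$ whose $k$-th coefficient vanishes to order $|k|$ at the center, so $w(x) = O(\ell^{|k|}) = O(e^{-|k||x|})$ as $|x| \to \infty$. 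Differentiating~\eqref{1D-Laplacian-Regular} iteratively, and using that $\rho$ and all its derivatives tend to $0$ at $\pm\infty$, propagates this decay to every $\partial_x^n w$, yielding exactly the boundary conditions in the definition of $A_{\lambda,k}$. For the dimension bound when $k \ne 0$, observe that~\eqref{1D-Laplacian-Regular} is a second order linear ODE with bounded coefficients, and since $\rho(x) \to 0$ as $x \to +\infty$, it is asymptotic to $-w'' + k^2 w = 0$, whose fundamental solutions are $e^{\pm |k| x}$; imposing decay at $+\infty$ picks out at most the one-dimensional subspace of solutions with leading behavior $e^{-|k|x}$, so $\dim A_{\lambda,k} \le 1$.

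The main delicate point is the matching between smoothness on $\Sigma$ at the poles — where the isothermal chart degenerates — and the exponential decay condition at $\pm\infty$ for the radial profile~$w$. The converse direction (showing that an arbitrary $w \in A_{\lambda,k}$ indeed defines a smooth function $e^{ik\varphi} w$ on all of~$\Sigma$) requires bootstrapping the leading asymptotics $w(x) \sim c_\pm e^{-|k||x|}$ into a full asymptotic expansion in powers of $e^{\mp x}$ via the ODE, and translating this back through $\ell = f(x)$ into a smooth expansion in geodesic polar coordinates $(\ell,\varphi)$ near each pole; this is the technical heart of the argument.
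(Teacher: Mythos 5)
Your proof is correct and follows the same overall route as the paper: decompose $E_\lambda$ under the isometric $\S^1$-action (your spectral decomposition of $D_\varphi|_{E_\lambda}$ with integer spectrum is the same step as the paper's appeal to the one-dimensional irreducible representations $\tau_k$ of $\S^1$), separate variables in the isothermal chart to obtain~\eqref{1D-Laplacian-Regular}, and read off orthogonality from $L^2(\d\varphi,\S^1)$. The one genuinely different sub-argument is the bound $\dim A_{\lambda,k}\le 1$: the paper observes that for two solutions $w_1,w_2$ of~\eqref{1D-Laplacian-Regular} the Wronskian $W(w_1,w_2)$ is constant (the equation has no first-order term) and tends to $0$ at infinity by the boundary conditions, hence vanishes identically; this is more elementary and self-contained than your appeal to the limiting equation $-w''+k^2w=0$ at $+\infty$, which to be rigorous needs an asymptotic-integration result (the paper does invoke exactly such a result from Olver, but only later, in Corollary~\ref{cor:Solution-Approximate}). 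Conversely, your derivation of the boundary conditions --- via the vanishing order $O(\ell^{|k|})$ of the $k$-th Fourier mode at a center of rotational symmetry together with $f(x)\sim c_N e^{x}$ --- is more detailed than the paper's one-line remark that they follow from the continuity of $D_\varphi^n u=k^n u$ at $N$ and $S$. The converse inclusion you flag at the end (that every $w\in A_{\lambda,k}$ extends to a smooth eigenfunction on $\Sigma$) is also left implicit in the paper's proof; it is not needed downstream, since only the inclusion of each isotypic component into $e^{ik\varphi}A_{\lambda,k}$ and the dimension bound are used.
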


\begin{proof}
The group action of~$\S^{1}$ on~$\Sigma$ induces naturally a group action on function spaces by $\varphi f=f\comp\varphi^{-1}$. Now that~$\S^{1}$ commutes with~$ -\Delta $, $E_{\lambda}$ is stable under~$\S^{1}$. It is known that the irreducible complex representations
of $\S^{1}$ are all one-dimensional of the form 
\begin{equation*}
\tau_k : \S^{1}\to U(1)\ \varphi\mapsto e^{ik\varphi},\quad k\in\Z.
\end{equation*}
Therefore~$ E_\lambda $ can be decomposed into $ \tau_k $-invariant subspaces, consisting of functions $ u(x,\varphi) $ satisfying
\begin{equation*}
u(x,\varphi)=\varphi^{-1}u(x,0)=e^{-ik\varphi}u(x,0),
\end{equation*}
which also shows that $ D_\varphi u = k u $. 

To obtain the equation satisfied by $ w \in A_{\lambda,k} $, it suffices to plug $ u(x,\varphi) = e^{ik\varphi} w(x) $ into the equation $ -\Delta u = \lambda^2 u $. The boundary condition for $ k \ne 0 $ comes evidently from the continuity of~$ D_\varphi^n u = k^n u $ at~$ N $ and~$ S $. To show that $ \dim A_{\lambda,k} \le 1 $, let~$w_{1}$ and~$w_{2}$ be two solutions to~\eqref{1D-Laplacian-Regular}, then their Wronskian $W(w_{1},w_{2})$, which is a constant by a direct calculation, vanishes at infinity by the boundary conditions. So these two solutions are linearly dependent.
\end{proof}

\begin{cor}
\label{cor:rotation-number-less-than-energy}
If $ 0 \ne |k| \ge \lambda $, then~$ A_{\lambda,k} = \{0\} $.
\end{cor}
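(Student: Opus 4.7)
The plan is a standard energy/Agmon-type argument on the ODE~\eqref{1D-Laplacian-Regular}. Take $w \in A_{\lambda,k}$ with $k \ne 0$ and $|k| \ge \lambda$. First I would verify that $w$ (and its derivatives) decay exponentially at infinity, so that $w \in H^1(\R)$ and the integration-by-parts step below is legitimate. This decay follows from the boundary conditions in Proposition~\ref{prop:Laplacian-Eigenfunction-Evolution-Surface} together with the ODE: since $\rho(x) \to 0$ as $|x| \to \infty$ (the poles~$N$ and~$S$ correspond to $x = \mp\infty$), the equation
\begin{equation*}
-\partial_x^2 w = (\lambda^2 \rho^2 - k^2) w
\end{equation*}
behaves at infinity like $-\partial_x^2 w \approx -k^2 w$, so the only admissible (decaying) solutions decay like $e^{-|k||x|}$. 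In particular $w \in L^2(\R) \cap L^\infty(\R)$ with exponentially decaying derivative.

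Next I would test~\eqref{1D-Laplacian-Regular} against $\bar w$ and integrate over $\R$. The boundary terms vanish by the decay above, and one obtains
\begin{equation*}
\int_\R |\partial_x w|^2 \dx + \int_\R \bigl(k^2 - \lambda^2 \rho(x)^2\bigr) |w|^2 \dx = 0.
\end{equation*}
The hypothesis $|k| \ge \lambda$ combined with $0 < \rho \le 1$ implies $k^2 - \lambda^2 \rho^2 \ge 0$ pointwise, so both integrands are non-negative and must therefore vanish identically. From $\partial_x w \equiv 0$ we deduce that $w$ is constant on $\R$; combined with the boundary condition $w(x) \to 0$ at infinity, this forces $w \equiv 0$.

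I do not expect any real obstacle here. The only point worth being slightly careful about is the justification of the integration by parts (equivalently, the membership $w \in H^1(\R)$), which is why I single out the exponential decay step; this is where the hypothesis $k \ne 0$ enters crucially, since for $k = 0$ no such decay is available and the statement would indeed fail (the constant function on $\Sigma$ is a Laplacian eigenfunction with $\lambda = 0 = k$).
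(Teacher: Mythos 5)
Your proposal is correct and follows essentially the same route as the paper: establish $w\in H^1(\R)$ via the decay at infinity (the paper delegates this to Corollary~\ref{cor:Solution-Approximate}, based on Olver's asymptotics for the ODE), then pair the equation with $\bar w$ and use $0<\rho\le 1$ to force both non-negative terms to vanish. The only cosmetic difference is the last step: you conclude from $\partial_x w\equiv 0$ plus decay, while the paper uses the strict inequality $\lambda^2\rho(x)^2-k^2<0$ for $x\ne 0$ together with continuity of $w$; both are valid.
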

\begin{proof}
Suppose $ w \in A_{\lambda,k} $ with $ 0 \ne |k| \ge \lambda $, then for $ n \in \N $,
\begin{equation*}
-\partial_{x}^{2}w+k^{2}w=\lambda^{2}\rho^{2}w,\quad \lim_{|x|\to\infty} \partial_x^n w(x)=0.
\end{equation*}
We will show that $w\in H^{1}(\R) \subset C(\R) $ (see Corollary \ref{cor:Solution-Approximate}), so it is legitimate to take~$L^{2}(\R)$ inner product between~$w$ and the equation to get
\begin{equation*}
0 \le \|\partial_x w\|_{L^{2}(\R)}^{2}
=(- \partial_x^2 w,w)_{L^{2}(\R)}
=\int_{\R}(\lambda^{2}\rho^{2}(x)-k^{2})|w(x)|^{2}\dx.
\end{equation*}
However, $0\le\rho\le1$, and that $\rho(x)<1$ except for $x=0$, we see that $\lambda^{2}\rho(x)^{2}-k^{2}<0$ except for $x=0$. Therefore $w(x)\equiv 0$ since it is continuous.
\end{proof}

\begin{cor}
\label{cor:orthogonality-A(lamnbda,k)}
For $ k \in \Z $, and $ \lambda_1 \ne \lambda_2 $,
\begin{equation*}
A_{\lambda_1,k} \perp A_{\lambda_2,k} \quad \mathrm{with\ respect\ to\ } L^2(\rho^2\dx,\R).
\end{equation*}
\end{cor}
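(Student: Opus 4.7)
My plan is to reduce the orthogonality on the weighted real line to the standard $L^2(\Sigma)$-orthogonality of distinct Laplacian eigenspaces, which is automatic from the self-adjointness of $-\Delta$. The bridge is Proposition~\ref{prop:Laplacian-Eigenfunction-Evolution-Surface}, which exhibits each element of $A_{\lambda,k}$ as the radial part of a genuine Laplacian eigenfunction on $\Sigma$; combined with the factorization~\eqref{eq:Ltwo-decomposition-Sigma} coming from the isothermal coordinates, the $L^2(\Sigma)$ inner product will split cleanly in the variables $(x,\varphi)$.

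Concretely, I would pick $w_1 \in A_{\lambda_1,k}$ and $w_2 \in A_{\lambda_2,k}$ and lift them to $u_i(x,\varphi) = e^{ik\varphi}\,w_i(x) \in e^{ik\varphi} A_{\lambda_i,k} \subset E_{\lambda_i}$. Since $-\Delta$ is self-adjoint on $L^2(\Sigma)$ and $\lambda_1^2 \ne \lambda_2^2$, the eigenspaces $E_{\lambda_1}$ and $E_{\lambda_2}$ are $L^2(\Sigma)$-orthogonal, so $(u_1,u_2)_{L^2(\Sigma)} = 0$. The Riemannian volume form in isothermal coordinates is $\rho^2\,\dx\,\d\varphi$, so by Fubini,
\[
(u_1,u_2)_{L^2(\Sigma)} = \Big(\int_0^{2\pi} |e^{ik\varphi}|^2\,\d\varphi\Big)\int_\R w_1\,\overline{w_2}\,\rho^2\,\dx = 2\pi\,(w_1,w_2)_{L^2(\rho^2\dx,\R)},
\]
and the vanishing of the left-hand side forces $w_1 \perp w_2$ in $L^2(\rho^2\dx,\R)$, which is the claim.

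A more hands-on alternative would be to integrate by parts directly on~\eqref{1D-Laplacian-Regular}: multiply the equation for $w_1$ by $\overline{w_2}$, the one for $w_2$ by $\overline{w_1}$, subtract, and peel off the factor $(\lambda_1^2-\lambda_2^2)\int \rho^2 w_1\overline{w_2}\,\dx$. For $k\ne 0$ the decay built into Proposition~\ref{prop:Laplacian-Eigenfunction-Evolution-Surface} annihilates every boundary term, but for $k=0$ no such decay is stated, and one would have to invoke separately that $w_i \in H^1(\R)$, in the spirit of Corollary~\ref{cor:rotation-number-less-than-energy}. For this reason I prefer the global route via self-adjointness, which handles all $k \in \Z$ uniformly. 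There is no genuine obstacle here; the only point demanding minor care is the clean factorization of the $L^2(\Sigma)$ inner product in isothermal coordinates, and that is precisely what~\eqref{eq:Ltwo-decomposition-Sigma} encodes.
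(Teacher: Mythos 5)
Your primary argument---lifting $w_i \in A_{\lambda_i,k}$ to the eigenfunctions $u_i(x,\varphi) = e^{ik\varphi}w_i(x) \in E_{\lambda_i}$, invoking the $L^2(\Sigma)$-orthogonality of distinct eigenspaces of the self-adjoint operator $-\Delta$, and then factorizing the inner product via the decomposition~\eqref{eq:Ltwo-decomposition-Sigma} to obtain $0 = 2\pi\,(w_1,w_2)_{L^2(\rho^2\dx,\R)}$---is precisely the paper's own proof. The proposal is correct.
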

\begin{proof}
For $ w_i \in A_{\lambda_i,k} $ with $ i=1,2 $, set $ u_i(x,\varphi) = e^{ik\varphi}w_i(x) $, then by~\eqref{eq:Ltwo-decomposition-Sigma},
\begin{equation*}
0 = (u_1,u_2)_{L^2(\Sigma)} = (e^{ik\varphi}w_1,e^{ik\varphi}w_2)_{L^2(\rho^2 \dx,\R) \otimes L^2(\d\varphi,\S^1)} =2\pi (w_1,w_2)_{L^2(\rho^2 \dx,\R)}.
\end{equation*}
\end{proof}

\begin{rem}
For $ n \in \N $, we set
\begin{equation*}
\tilde{A}_{n,k} = \bigoplus_{\lambda^2 \in I_n} A_{\lambda,k},
\end{equation*}
then it is an orthogonal direct sum with respect to $ L^2(\rho^2 \dx,\R) $. And we have
\begin{equation*}
\tilde{E}_n = \bigoplus_{k\in\Z} e^{ik\varphi} \tilde{A}_{n,k}.
\end{equation*}
\end{rem}

\subsection{Reduction to Observability of $ \L $-Eigenfunctions}

Since~$ \Sigma $ has no boundary, the energy of a solution~$ u $ to~\eqref{eq:Damped-Wave-Equation} does not control its zero frequency. In order to deal with this problem, we introduce the quotient Sobolev spaces
\begin{equation*}
\mathcal{H}^{s}(\Sigma)=H^{s}(\Sigma)/\C=\{[u]=u+\C:u\in H^{s}(\Sigma)\},
\end{equation*}
equipped with the quotient norms. We set in particular,
\begin{equation*}
\|{[u]}\|_{\mathcal{H}^{1}(\Sigma)} = \|\nabla u\|_{L^{2}(\Sigma)},
\end{equation*} 
so that, for $ u \in C(\R,H^1(\Sigma)) \cap C^1(\R,L^2(\Sigma)) $,
\begin{equation*}
E(u,t) = \frac{1}{2} \|(u(t),\pt u(t))\|_{\mathcal{H}^1 \times L^2}^2.
\end{equation*}
By the theorem of Hille-Yosida, we have
\begin{prop}
Define the quotient Laplacian by $[\Delta][u]=[\Delta u]$ for $[u]\in D([\Delta])=\{[u]\in\mathcal{H}^{1}(\Sigma):\Delta u\in L^{2}(\Sigma)\}$. Set
\begin{equation*}
[A]=\begin{pmatrix}0 & -[\Id]\\
-j\circ[\Delta] & a
\end{pmatrix}
\end{equation*}
with $D([A])=D([\Delta])\times H^{1}(\Sigma)$ where $[\Id]:H^{1}(\Sigma)\to\mathcal{H}^{1}(\Sigma)$ is the canonical projection, while $j:\mathcal{H}^{0}(\Sigma)\to L^{2}(\Sigma)$ associates each $[w]\in\mathcal{H}^{0}(\Sigma)$ a representative~$w$ such that $\int_{\Sigma}w\,\dx=0$. Then for all $([u_{0}],u_{1})\in\mathcal{H}^{1}(\Sigma)\times L^{2}(\Sigma)$, there exists a unique solution $([u],v)\in C(\R,\mathcal{H}^{1}(\Sigma))\times C^{1}(\R,L^{2}(\Sigma))$ of the equation 
\begin{equation}
\label{eq:equation-quotient-wave}
\begin{cases}
\pt\binom{[u]}{v}+[A]\binom{[u]}{v}=0,\\
([u],v)_{t=0}=([u_{0}],u_{1}).
\end{cases}
\end{equation}
Moreover, if~$u$ is the solution of~\eqref{eq:Damped-Wave-Equation} with initial data $(u_{0},u_{1})$, then $([u],\pt u)$ is the solution to~\eqref{eq:equation-quotient-wave}.
\end{prop}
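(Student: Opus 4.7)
The plan is to verify the hypotheses of the Hille-Yosida theorem (in its Lumer-Phillips form) for the operator $-[A]$ on the Hilbert space $\mathcal{H} := \mathcal{H}^1(\Sigma) \times L^2(\Sigma)$ equipped with the energy inner product
\[
\bigl(([u],v),\,([u'],v')\bigr)_{\mathcal{H}} := (\nabla u,\nabla u')_{L^2} + (v,v')_{L^2},
\]
which is well defined on the quotient since gradients annihilate constants. Density of $D([A])=D([\Delta])\times H^{1}(\Sigma)$ in $\mathcal{H}$ is standard. The solution to~\eqref{eq:equation-quotient-wave} is then produced by evaluating $e^{-t[A]}([u_{0}],u_{1})$.

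A crucial preliminary observation is that, on a closed manifold, $\int_{\Sigma}\Delta u=0$, so $\Delta u$ is already the zero-mean representative of its equivalence class; hence $j\circ[\Delta]u=\Delta u$ in $L^{2}(\Sigma)$. Using this, the accretivity of $[A]$ is a direct computation: integration by parts cancels the cross terms between $(\nabla v,\nabla u)_{L^{2}}$ and $(\Delta u,v)_{L^{2}}$, leaving
\[
\mathrm{Re}\bigl([A]U,U\bigr)_{\mathcal{H}} = \int_{\Sigma} a\,|v|^{2}\,\dx \ge 0
\]
for every $U=([u],v)\in D([A])$, by the non-negativity of $a$.

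The main obstacle is the maximality of $[A]$, namely the surjectivity of $\lambda I+[A]:D([A])\to\mathcal{H}$ for some $\lambda>0$. Given $([f],g)\in\mathcal{H}$, the first line of the resolvent equation $(\lambda I+[A])([u],v)=([f],g)$ forces $v=\lambda u-f-c$ after fixing the zero-mean representatives of $u$ and $f$, where $c\in\R$ is a scalar to be determined. Substituting into the second line yields the elliptic equation
\[
(-\Delta+\lambda a+\lambda^{2})u = g+\lambda f+af+(\lambda+a)c
\]
on the zero-mean subspace of $H^{1}(\Sigma)$; integrating over $\Sigma$ and using $\int_{\Sigma}\Delta u=\int_{\Sigma}u=\int_{\Sigma}f=0$ expresses $c$ linearly in $u$, $f$, $g$. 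This self-consistent problem is solved by Lax-Milgram: for $\lambda$ large enough (so that the rank-one feedback through $c$ is absorbed by the $\lambda^{2}$-term), the associated bilinear form on the zero-mean $H^{1}$ is continuous and coercive, yielding a unique $u$, whence $v$, and then $([u],v)\in D([A])$.

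Hille-Yosida then provides the semigroup $e^{-t[A]}$, giving existence, uniqueness, and continuity of the solution $([u],v)\in C(\R,\mathcal{H})$, of class $C^{1}$ with values in $D([A])$ whenever the initial datum lies in $D([A])$; the statement for general data follows by density. Finally, if $u$ is a distributional solution to~\eqref{eq:Damped-Wave-Equation}, the pair $([u],\pt u)$ satisfies~\eqref{eq:equation-quotient-wave} by direct inspection (once more invoking $j\circ[\Delta]u=\Delta u$), so uniqueness forces it to coincide with the semigroup solution.
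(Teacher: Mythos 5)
Your proposal is correct and is exactly the route the paper takes: the paper simply invokes Hille--Yosida for this proposition without writing out the verification, and your argument (density of $D([A])$, accretivity via the cancellation $-(\nabla v,\nabla u)+(\nabla u,\nabla v)$ leaving $\int_\Sigma a|v|^2\ge 0$, and maximality via Lax--Milgram on the zero-mean subspace after eliminating the free constant $c$) is the standard way to fill it in. The observation that $j\circ[\Delta][u]=\Delta u$ because $\int_\Sigma\Delta u=0$ is the right way to handle the quotient bookkeeping.

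One small point to tighten: Lumer--Phillips applied to $-[A]$ only yields the contraction semigroup $e^{-t[A]}$ for $t\ge 0$, whereas the statement asserts a solution in $C(\R,\mathcal{H}^1(\Sigma))$, i.e.\ for negative times as well. This is easily repaired --- either note that $[A]$ is a bounded perturbation (by $\mathrm{diag}(0,a)$) of the skew-adjoint wave generator, which therefore generates a group, or run your accretivity/maximality computation once more for $[A]-\|a\|_{L^\infty}\,I$ in the reversed time direction --- but as written the backward half of the claim is not covered.
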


\begin{prop}
\label{prop:Weak-Stability-Property-Criterion}
If $0 \le a \in L^\infty(\Sigma) $ and $ a \not\equiv 0 $, then~$ a $ weakly stabilizes~\eqref{eq:Damped-Wave-Equation}.
\end{prop}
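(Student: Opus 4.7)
The strategy is the classical LaSalle invariance argument adapted to the quotient-wave formulation~\eqref{eq:equation-quotient-wave}. The plan is to reduce weak stabilization to showing that every $\omega$-limit trajectory is trivial, and then close via a unique continuation statement for Laplace eigenfunctions.

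I would first restrict to initial data $([u_0],u_1) \in D([A])$, dense in $\mathcal{H}^1(\Sigma) \times L^2(\Sigma)$. For such data, $\pt([u](t),\pt u(t)) = -[A]([u](t),\pt u(t))$ is itself a solution of~\eqref{eq:equation-quotient-wave}, so its energy is non-increasing; the full orbit is therefore bounded in $D([A])$. Since~$\Delta$ has compact resolvent on the compact manifold~$\Sigma$, the embedding $D([A]) \hookrightarrow \mathcal{H}^1(\Sigma) \times L^2(\Sigma)$ is compact, so the orbit is precompact in the energy space. The energy $E(u,t)$ is non-increasing by~\eqref{eq:energy-identity} and converges to some $E_\infty \ge 0$. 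Extracting a subsequence $t_n \to \infty$ produces a limit $([v_0],v_1)$ in the energy norm; if $v$ denotes the solution of~\eqref{eq:equation-quotient-wave} with this initial data, continuous dependence gives $E(v,s) \equiv E_\infty$ for all $s \ge 0$. The energy identity then forces $a\,\ps v \equiv 0$ on $\R_+ \times \Sigma$, and consequently~$v$ solves the undamped wave equation.

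To conclude $E_\infty = 0$, I would argue by spectral decomposition and unique continuation. Since $\spec(-\Delta)$ is discrete, the solution admits an expansion
\begin{equation*}
\ps v(s,x) = i \sum_{\mu > 0} \mu \big( e^{i\mu s} A_\mu(x) - e^{-i\mu s} B_\mu(x) \big), \qquad A_\mu, B_\mu \in E_\mu,
\end{equation*}
converging in $L^2(\Sigma)$ for each~$s$. Averaging $a\,\ps v \equiv 0$ against $e^{\pm i \mu_0 s}$ over $s \in [0,T]$ and letting $T \to \infty$ isolates each frequency, yielding $a A_\mu = 0$ and $a B_\mu = 0$ in $L^2(\Sigma)$ for every $\mu > 0$. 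Thus each $A_\mu$, $B_\mu$ vanishes a.e.~on the set $\{a > 0\}$, which has positive measure since $a \not\equiv 0$. By Aronszajn's unique continuation theorem for the elliptic equation $(-\Delta - \mu^2) V = 0$ on the connected manifold~$\Sigma$, a Laplace eigenfunction vanishing on a set of positive measure must vanish identically, hence $A_\mu = B_\mu = 0$ for every $\mu > 0$. Therefore $\ps v \equiv 0$, so~$v$ is time-independent, and then $\Delta v = 0$ forces~$v$ to be constant on connected~$\Sigma$, giving $[v_0] = 0$, $v_1 = 0$, and $E_\infty = 0$.

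A density argument finally extends the result to all initial data in $\mathcal{H}^1(\Sigma) \times L^2(\Sigma)$: approximate $([u_0],u_1)$ by $([u_0^\varepsilon],u_1^\varepsilon) \in D([A])$ and use $E(u - u^\varepsilon,t) \le E(u - u^\varepsilon,0)$ with a triangle inequality in the energy norm to transfer $\lim_t E(u^\varepsilon,t) = 0$ to~$u$. The main obstacle is the unique continuation step, for which one must invoke the classical fact that nontrivial Laplace eigenfunctions on a smooth connected compact manifold have nodal sets of Lebesgue measure zero; the rest of the argument is the standard LaSalle machinery transported to the quotient framework.
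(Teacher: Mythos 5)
Your argument is essentially the paper's proof: the LaSalle invariance scheme in the quotient framework (boundedness of the orbit in $D([A])$, compactness of $D([A])\hookrightarrow\mathcal{H}^1\times L^2$, passage to an $\omega$-limit solving the undamped equation with $a\,\pt v\equiv 0$), followed by time-averaging against $e^{-i\mu_0 s}$ to isolate each frequency and the classical fact that nodal sets of Laplace eigenfunctions have measure zero. One small omission: your expansion of $\pt v$ drops the zero-frequency mode. On a compact manifold without boundary the undamped solution contains a term $v^1_0\,t$ with $v^1_0$ the mean of $v_1$, so $\pt v = v^1_0 + i\sum_{\mu>0}\mu(e^{i\mu s}A_\mu - e^{-i\mu s}B_\mu)$; killing all $A_\mu, B_\mu$ only gives $\pt v \equiv v^1_0$, and $E_\infty$ would still contain $\tfrac12\|v^1_0\|_{L^2}^2$. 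The fix is the same averaging with $\mu_0=0$: $\frac1T\int_0^T a\,\pt v\,\ds = a v^1_0 = 0$, and since $v^1_0$ is a constant and $a\not\equiv 0$, $v^1_0=0$ — this is exactly the extra step the paper carries out.
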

\begin{proof}
The idea of the proof comes from~\cite{B-G}. By a density argument, it suffices to suppose that $(u_{0},u_{1})\in D(A)$, so that $([u_{0}],u_{1})\in D([A])$. Let~$u$ denote the corresponding solution to~\eqref{eq:Damped-Wave-Equation}. Observe that $E(u,t)=\frac{1}{2}\|([u],\pt u)\|_{\mathcal{H}}^{2}$ is non-increasing, and that~$ [A] $ commutes with the evolution of~\eqref{eq:equation-quotient-wave},
\begin{equation}
\label{eq:Bounded-in-D(A)}
\begin{split}
\Big\|\binom{[u]}{\pt u}\Big\|_{[A]} :=\Big\|\binom{[u]}{\pt u}\Big\|_{\mathcal{H}}+\Big\|[A]\binom{[u]}{\pt u}\Big\|_{\mathcal{H}}
\le\Big\|\binom{[u_{0}]}{u_{1}}\Big\|_{\mathcal{H}}+\Big\| A\binom{[u_{0}]}{u_{1}}\Big\|_{\mathcal{H}}<\infty.
\end{split}
\end{equation}
We claim that $(D([A]),\|\cdot\|_{[A]})\hookrightarrow\mathcal{H}$ is compact. Indeed, if $\binom{[u]_{n}}{v_{n}}$ is bounded in~$D([A])$, then $u_{n}$, $\Delta u_{n}$, $v_{n}$, $\nabla v_{n}$ are bounded in~$L^{2}(\Sigma)$. Up to a subsequence, $u_{n}-\int_{M}u_{n}\to u_{0}$ in~$H^{1}(\Sigma)$, so $[u_{n}]\to[u_{0}]$ in~$\mathcal{H}^{1}(\Sigma)$, and $v_{n}\to v_{0}$ in $L^{2}(\Sigma)$. By~\eqref{eq:Bounded-in-D(A)}, there exists a sequence $t_{k}\to+\infty$ such that $([u(t_{k})],\pt u(t_{k}))\rightharpoonup([v_{0}],v_1)$ weakly in~$D([A])$; and strongly in $\mathcal{H}^{1}(M)$ by compactness. Let~$v$ be the solution to~\eqref{eq:Damped-Wave-Equation} with initial data $(v_{0},v_{1})$, where~$v_{0}$ is the representative of~$[v_{0}]$ such that $\int_{\Sigma} v_{0}\dx = 0$, then 
\begin{equation*}
\begin{split}
E(v,t) 
& =\Big\|\binom{[v(t)]}{\pt v}\Big\|_{\mathcal{H}}
  =\Big\| e^{-t[A]}\binom{[v_{0}]}{v_{1}}\Big\|_{\mathcal{H}} 
  =\Big\| e^{-t[A]}\lim_{k\to\infty}e^{-t_{k}[A]}\binom{[u_{0}]}{u_{1}}\Big\|_{\mathcal{H}}\\
& =\Big\|\lim_{k\to\infty}e^{-(t+t_{k})[A]}\binom{[u_{0}]}{u_{1}}\Big\|_{\mathcal{H}}
  =\Big\|\binom{[v_{0}]}{v_{1}}\Big\|_{\mathcal{H}}=E(v,0).
\end{split}
\end{equation*}
So $v$ satisfies the undamped wave equation~\eqref{eq:undamped-wave-eq} as well.

We decompose the initial data as $ v_{0}=\sum_{\lambda}v_{\lambda}^{0}$ ,$ v_{1}=\sum_{\lambda} v_{\lambda}^{1} $, where $ \lambda $ varies in $ \spec(\sqrt{-\Delta}) $ and $ v^i_\lambda \in E_{\lambda} $. Then $ v^0_0 = 0 $, and
\begin{equation*}
v(t) = \cos(t\sqrt{-\Delta}) v_0 + \sqrt{-\Delta}^{-1}\sin(t\sqrt{-\Delta}) v_1
= v^1_0t+\sum_{\lambda \ne 0}\big(e^{it\lambda}v_\lambda^{+}+e^{-it\lambda}v_\lambda^{-}\big),
\end{equation*}
where for $ \lambda \ne 0 $,
\begin{equation*}
v_\lambda^{+} + v_\lambda^{-}=v_\lambda^{0},
\quad
i\lambda (v_\lambda^{+}-v_\lambda^{-})=v_\lambda^{1}.
\end{equation*}
Now fix $\lambda'\ne 0$, and set $w_{\lambda'}(T,x)=\frac{1}{T}\int_{0}^{T}\pt v(t,x)e^{-it\lambda'}\dt$. The fact that $a\pt v=0$ implies $aw_{\lambda'}=0$. An explicit calculation shows 
\begin{equation*}
\begin{split}
w_{\lambda'}(T)
=i\lambda'v_{\lambda'}^{+} +\sum_{\lambda \ne \lambda'} \frac{i\lambda}{iT(\lambda-\lambda')} \big(e^{iT(\lambda-\lambda')}-1\big) v_\lambda^{+} 
-\sum_{\lambda} \frac{i\lambda}{iT(\lambda+\lambda')} \big(e^{-iT(\lambda+\lambda')}-1\big)v_\lambda^{-}.
\end{split}
\end{equation*}
This implies that, as $T\to\infty$, $w_{\lambda'}(T)\to i\lambda'v_{\lambda'}^{+}$ in~$L^{2}(\Sigma)$. Since $aw_{\lambda'}=0$ and $\lambda' \ne 0$, we must have $av_{\lambda'}^{+}=0$. Therefore $v_{\lambda'}^{+}=0$ because as a classical result, the nodal set $\{v^{+}_{\lambda'}=0\}$ is of zero measure. The same argument shows that $v_{\lambda'}^{-}=0$ for $\lambda' \ne 0$. And similarly, since $\frac{1}{T}\int_{0}^{T}\pt v(t,x)\dt=v^1_0$, we have $ T \to \infty $, $0 \equiv a v^1_0 $, whence $v^1_0=0$. Therefore $v \equiv 0$, and $E(u,t_{k})\to E(v,0)=0$. 
\end{proof}

Let~$ \L $ be defined by~\eqref{eq:def-L}. Recall the undamped wave equation~\eqref{eq:undamped-wave-eq}
\begin{equation}\tag{\ref{eq:undamped-wave-eq}}
\begin{cases}
(\partial_{t}^{2}-\Delta)u=0, & \mathrm{in\ }\mathcal{D}'(\R\times \Sigma);\\
(u,\pt u)_{t=0}=(u_{0},u_{1}), & \in H^{1}(\Sigma)\times L^{2}(\Sigma),
\end{cases}
\end{equation}
and the perturbed wave equation~\eqref{eq:equation-L-zoll},
\begin{equation}\tag{\ref{eq:equation-L-zoll}}
\begin{cases}
(\partial_{t}^{2}+\L)u=0, & \mathrm{in\ }\mathcal{D}'(\R\times\Sigma);\\
(u,\pt u)_{t=0}=\left(u_{0},u_{1}\right), & \in H^{1}(\Sigma)\times L^2(\Sigma).
\end{cases}
\end{equation}

\begin{defn}
Let $ 0 \le a \in L^\infty(\Sigma) $, we say that~$ a $ observes~\eqref{eq:undamped-wave-eq} (resp.~\eqref{eq:equation-L-zoll}), if for some constant $ C > 0 $, $ T > 0 $, and every solution~$ u $ to~\eqref{eq:undamped-wave-eq} (resp.~\eqref{eq:equation-L-zoll}), the observability~\eqref{eq:ineq-observability} holds.
We say that~$ a $ observes $ \L $-eigenfunctions, if for some constant $ C > 0 $ and every $ \L $-eigenfunction $ u \in \cup_n \tilde{E}_n $, the observability~\eqref{eq:observability-L-eigenfunction} holds.
\end{defn}

We will reduce the observability of~\eqref{eq:undamped-wave-eq} to the observability of~\eqref{eq:equation-L-zoll}, and then to the observability of $ \L $-eigenfunctions. We first state some preliminaries as those used in proving Theorem~\ref{thm:Lebeau}. For $ u \in H^s(\Sigma) $ with $ s \in \R $, there exists a unique decomposition into sums of $ \L $-eigenfunctions,
\begin{equation*}
u = \sum_{n \ge 0} u_n, \quad \mathrm{with\ } u_n \in \tilde{E}_n.
\end{equation*}
Then we specify the~$ H^s(\Sigma) $ norm of~$ u $ by setting
\begin{equation*}
\|u\|_{H^s(\Sigma)}^2 = \sum_{n \ge 0} (n + 1/2)^{2s} \|u_n\|_{L^2}^2.
\end{equation*}
If we decompose the initial data $ u_i = \sum_{n \ge 0} u^i_n $, $ (i=0,1) $, with $ u^i_n \in \tilde{E}_n $, then the solution to~\eqref{eq:equation-L-zoll} is
\begin{equation*}
u(t) = \cos(t\sqrt{\L}) u_0 + \sqrt{\L}^{-1} \sin(t\sqrt{\L}) u_1
= \sum_{n \ge 0} \big( e^{it(n+1/2)} u^+_n + e^{-it(n+1/2)} u^-_n \big),
\end{equation*}
where for $ n \ge 0 $
\begin{equation*}
u^+_n + u^-_n = u^0_n, \quad i(n+1/2) (u^+_n - u^-_n) = u^1_n,
\end{equation*}
and satisfies the a priori estimate $ \|u\|_{L^\infty H^s(\Sigma)} \lesssim \|(u_0,u_1)\|_{H^s(\Sigma) \times H^{s-1}(\Sigma)} $.

\begin{lem}
\label{LEM::Reduction-Step-2} 
Let $ 0 \le a \in L^\infty(\Sigma) $, if~$ a $ observes~\eqref{eq:equation-L-zoll}, then~$ a $ observes~\eqref{eq:undamped-wave-eq}.
\end{lem}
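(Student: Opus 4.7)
The plan is to mirror the strategy of Lemma~\ref{lem:observability-reduction-to-perturbed-wave-S^d}, replacing the scalar multiplier $\tfrac{(d-1)^2}{4}$ used in the spherical setting by the bounded operator $K = \Delta + \L$ on $L^2(\Sigma)$, whose $L^2$-boundedness is precisely the content of~\eqref{eq:K-norm}. The only structural difference between the two reductions is that we no longer have a commuting constant perturbation, so every estimate in which $\tfrac{(d-1)^2}{4}$ appeared multiplicatively must now be phrased in terms of the operator norm $\|K\|_{L^2\to L^2}\le A$; the analysis is otherwise identical.

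Given a solution $u$ of~\eqref{eq:undamped-wave-eq}, I would split $u=v+w$, where $v$ solves the perturbed wave equation~\eqref{eq:equation-L-zoll} with the same initial data $(u_0,u_1)$, and $w$ solves
\begin{equation*}
(\pt^{2}+\L)w = Ku,\qquad (w,\pt w)_{t=0}=(0,0),
\end{equation*}
so that $(\pt^2 - \Delta)u = (\pt^2 + \L)v + (\pt^2 + \L)w - Ku = 0$ is consistent. Since $v(0)=u_0$ and $\pt v(0)=u_1$, one has $E(v,0)=E(u,0)$; the observability hypothesis applied to $v$ together with the triangle inequality then gives
\begin{equation*}
E(u,0)=E(v,0)\lesssim\int_{0}^{T}\!\!\int_{\Sigma}a|\pt v|^{2}\,\dx\,\dt
\lesssim\int_{0}^{T}\!\!\int_{\Sigma}a|\pt u|^{2}\,\dx\,\dt
+\int_{0}^{T}\!\!\int_{\Sigma}a|\pt w|^{2}\,\dx\,\dt.
\end{equation*}
For the remainder, Duhamel's formula gives $\pt w(t)=\int_{0}^{t}\cos\!\big((t-s)\sqrt{\L}\big)\,Ku(s)\,\ds$, and combining the unitary bound $\|\cos(t\sqrt{\L})\|_{L^2\to L^2}\le 1$, the operator bound~\eqref{eq:K-norm}, and the a priori estimate $\|u\|_{L^{\infty}_{\loc}L^2}\lesssim \|(u_0,u_1)\|_{L^2\times H^{-1}}$ (established as in~\eqref{eq:estimate-a-priori-wave} using the characterization of the $H^s(\Sigma)$ norm via $\L$-eigenspaces), I obtain
\begin{equation*}
\|\pt w(t)\|_{L^{2}}^{2}\lesssim \|(u_0,u_1)\|_{L^2\times H^{-1}}^{2},
\end{equation*}
uniformly for $t\in[0,T]$. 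This yields the weak observability
\begin{equation*}
E(u,0)\lesssim \int_{0}^{T}\!\!\int_{\Sigma}a|\pt u|^{2}\,\dx\,\dt
+\|(u_0,u_1)\|_{L^2\times H^{-1}}^{2}.
\end{equation*}

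The final step is the classical uniqueness-compactness argument of Bardos--Lebeau--Rauch to drop the compact remainder. If strong observability failed, I would extract a sequence of solutions $u^n$ to~\eqref{eq:undamped-wave-eq} with $E(u^n,0)=1$ and $\int_{0}^{T}\!\!\int_{\Sigma}a|\pt u^n|^{2}\to 0$; the weak observability forces $\|(u^n_0,u^n_1)\|_{L^2\times H^{-1}}$ to be bounded away from $0$, and the compactness of $H^1\times L^2\hookrightarrow L^2\times H^{-1}$ yields a strong limit giving rise to a non-trivial solution $v$ of~\eqref{eq:undamped-wave-eq} with $a\pt v=0$ and non-vanishing energy. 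But exactly as in the second half of the proof of Proposition~\ref{prop:Weak-Stability-Property-Criterion}, decomposing $v$ along $\spec(\sqrt{-\Delta})$ and averaging in time against each oscillating factor forces every non-zero frequency component to vanish on the nodal complement of $\{a>0\}$, hence by the nodal-set-has-zero-measure property to vanish altogether, contradicting non-vanishing energy. The one point that deserves care, and which is the only real obstacle, is the correct use of~\eqref{eq:K-norm} to obtain a bound on $\pt w$ that is controlled by a norm strictly weaker than $H^1\times L^2$; this is exactly where the Duistermaat--Guillemin spectral clustering of Proposition~\ref{prop:spectral-zoll} enters in an essential way.
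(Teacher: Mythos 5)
Your proposal follows essentially the same route as the paper: the same decomposition $u=v+w$ with $(\pt^2+\L)w=Ku$, the same Duhamel bound on $\pt w$ via $\|K\|_{L^2\to L^2}\le A$, and the same uniqueness--compactness removal of the remainder (the paper concludes the unique-continuation step by invoking its weak-stabilization result, Proposition~\ref{prop:Weak-Stability-Property-Criterion}, whose proof contains exactly the time-averaging argument you describe). The one detail you should make explicit: since $\Sigma$ has no boundary, $E(u,0)$ does not control $\|u_0\|_{L^2}$, so in the compactness step you must first normalize $\int_\Sigma u_0\,\dx=0$ (equivalently, work with the quotient norms $\mathcal{H}^s$) in order for the sequence with $E(u^n,0)=1$ to be bounded in $H^1\times L^2$ and for the remainder $\|(u_0,u_1)\|_{L^2\times H^{-1}}$ to be genuinely compact relative to the energy; the paper does exactly this.
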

\begin{proof}
The proof is a mimic of that of Lemma~\ref{lem:observability-reduction-to-perturbed-wave-S^d}. Write $ K = \Delta + \L $, then by the definition of~$ \L $, $ K $ is bounded on~$ L^2(\Sigma) $, with $ \|K\|_{L^2\to L^2} \le A $. Let $u$ be the solution to~\eqref{eq:undamped-wave-eq}, with initial data $ (u_0,u_1) $. There is no harm in assuming that $\int_{\Sigma} u_{0}\,\dx=0$. Decompose $u=w+v$ with
\begin{equation*}
\begin{cases}
(\pt^{2}+\L)v=0, & (v,\pt v)_{t=0}=(u_{0},u_{1});\\
(\pt^{2}+\L)w=Ku, & (w,\pt w)_{t=0}=(0,0).
\end{cases}
\end{equation*}
Now that~$ a $ observes~\eqref{eq:equation-L-zoll},
\begin{equation*}
E(u,0) = E(v,0) 
\lesssim \int_{0}^{T}\int_{\Sigma} a|\pt v|^{2}\dx\,\dt
\lesssim \int_{0}^{T}\int_{\Sigma} a|\pt u|^{2}\dx\,\dt 
+ \int_{0}^{T}\int_{\Sigma} a|\pt w|^{2}\dx\,\dt.
\end{equation*}
By Duhamel's formula, $ \pt w(t) =\int_{0}^{t}\cos((t-s)\sqrt{\L})Ku(s)\,\ds $; and we have
\begin{equation*}
\|\pt w(t)\|_{L^{2}}^{2} 
\lesssim \int_{0}^{T}\|u(s)\|_{L^{2}}^{2}\ds 
\lesssim \|(u_{0},u_{1})\|_{L^{2}\times H^{-1}}^{2}
\lesssim \|([u_{0}],u_{1})\|_{\mathcal{H}^{0}\times H^{-1}}^{2}.,
\end{equation*}
where the last inequality is because~$ u_0 $ has no zero frequency. Combine the estimates above, we obtain a weak observability, with a compact remainder term on the right hand side
\begin{equation*}
\frac{1}{2} \|([u_{0}],u_{1})\|_{\mathcal{H}^{1}\times L^{2}}^{2}
=E(u,0)
\lesssim \int_{0}^{T}\int_{M}a|\pt u|^{2}\dx\,\dt 
+ \|([u_{0}],u_{1})\|_{\mathcal{H}^{0}\times H^{-1}}^{2}.
\end{equation*}
To remove the remainder term and prove the (strong) observability, we appeal to the uniqueness-compactness argument originally due to Bardos-Lebeau-Rauch~\cite{B-L-R}. It is an argument by contradiction that carries out as follows. Suppose that the observability of~\eqref{eq:undamped-wave-eq} does not hold, then there exists a sequence of initial data $(u_{0}^{n},u_{1}^{n})\in H^{1}(\Sigma)\times L^{2}(\Sigma)$ such that, $ \int_{\Sigma} u^n_0\,\dx = 0 $, and as $ n\to\infty $
\begin{equation*}
E(u^{n},0)=\frac{1}{2}\|([u_{0}^{n}],u_{1}^{n})\|_{\mathcal{H}^{1}\times L^{2}}^{2}=1,
\quad  \int_{0}^{n}\int_{\Sigma} a|\pt u^{n}|^{2}\dx\,\dt = o(1).
\end{equation*}
where~$u^{n}$ are the corresponding solutions~\eqref{eq:undamped-wave-eq}. By Rellich's compact injection theorem, up to a subsequence, we assume that, for some $([u_{0}],u_{1})\in\mathcal{H}^{1}\times L^{2}$,
\begin{enumerate}[nosep]
\item $ ([u_{0}^{n}],u_{1}^{n}) \rightharpoonup ([u_{0}],u_{1}) $ weakly in $ \mathcal{H}^{1}(\Sigma) \times L^{2}(\Sigma) $;
\item $ ([u_{0}^{n}],u_{1}^{n}) \to ([u_{0}],u_{1}) $ strongly in $ \mathcal{H}^{0}(\Sigma) \times H^{-1}(\Sigma) $.
\end{enumerate}
Passing $n\to\infty$ in the weak observability,
\begin{equation*}
1=E(u^{n},0)
\le o(1)+\|([u_{0}^{n}],u_{1}^{n})\|_{\mathcal{H}^{0}\times H^{-1}}^{2}
\to \|([u_{0}],u_{1})\|_{\mathcal{H}^{0}\times H^{-1}}^{2}.
\end{equation*}
Therefore, we will get a contradiction by showing that the right hand side vanishes. 

To show this, we observe that $E(u^{n},t) = E(u^{n},0) = \frac{1}{2} \|([u_{0}^{n}],u_{1}^{n})\|_{\mathcal{H}^{1}\times L^{2}}^{2}$ is uniformly bounded in~$t$ and~$n$. Therefore~$[u^{n}]$ is bounded in~$L^{\infty}(\R,\mathcal{H}^{1}(\Sigma))$ and~$\pt u^{n}$ is bounded in~$L^{\infty}(\R,L^{2}(\Sigma))$. Moreover $\int_{\Sigma}u^{n}(t,x)\,\dx$ is bounded in $L_{\loc}^{\infty}(\R_{t})$ (and is of order $O(t)$). Consequently~$u^{n}$ is bounded in $L_{\loc}^{\infty}(\R,H^{1}(\Sigma))$. The theorem of Ascoli and the compact injection theorem of Rellich show that, up to a subsequence, there exists a $(u,v)\in C(\R,L^{2}(\Sigma))\times L^{\infty}(\R,L^{2}(\Sigma))$, such that 
\begin{enumerate}
\item $u^{n}\to u$ strongly in $L_{\loc}^{\infty}(\R,L^{2}(\Sigma))$ ;
\item $u^{n} \rightharpoonup u$ respect to the weak-$\ast$ topology of $L_{\loc}^{\infty}(\R,H^{1}(\Sigma))$;
\item $\pt u^{n}\rightharpoonup v$ with respect to the weak-$\ast$ topology
of $L^{\infty}(\R,L^{2}(\Sigma))$. 
\end{enumerate}
Pass to the limit in the sense of distribution, we see that~$u$ satisfies~\eqref{eq:undamped-wave-eq}, with in particular $v=\pt u$. Therefore $\pt(\pt u^{n})=\Delta u^{n}$ is bounded in $L^{\infty}(\R,H^{-1}(\Sigma))$, so that $\pt u\in C(\R,H^{-1}(\Sigma))$, and $u\in C(\R,L^{2}(\Sigma))\cap C^{1}(\R,H^{-1}(\Sigma))$. However, since $(u_{0},u_{1})\in H^{1}(\Sigma)\times L^{2}(\Sigma)$, there exists a solution in the $C(\R,H^{1}(\Sigma))\cap C^{1}(\R,L^{2}(\Sigma))$. By the uniqueness of the solution in $C(\R,L^{2}(M))\cap C^{1}(\R,H^{-1}(M))$, these two solutions must coincide. Therefore, it is legitimate to talk about the energy of~$u$, which is conserved $E(u,t)\equiv E(u,0)$.
On the other hand, since $a\pt u=0$ in~$\mathcal{D}'(\Sigma)$, $u$ should also satisfy the damped wave equation~\eqref{eq:Damped-Wave-Equation}. Then Proposition~\ref{prop:Weak-Stability-Property-Criterion} shows that the energy $E(u,t)$ must decay to zero as $t\to+\infty$. Hence $E(u,0)=0$, i.e.~$([u_{0}],u_{1})=(0,0)$. 
\end{proof}

\begin{lem}
Let $ 0 \le a \in L^\infty(\Sigma) $, if~$ a $ observes $ \L $-eigenfunctions, then~$ a $ observes~\eqref{eq:equation-L-zoll}.
\end{lem}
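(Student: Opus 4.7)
The plan is to repeat \emph{mutatis mutandis} the argument of Lemma~\ref{lem:Observability-Spherical-Harmonics}, which established the analogous reduction in the spherical case. The only ingredients used there were the arithmetic structure of $\spec(\sqrt{\L})$ and the orthogonality of the resulting family of time exponentials; both remain available here because, by construction of~$\L$ via Proposition~\ref{prop:spectral-zoll} and Remark~\ref{remark:spectral-zoll-dim-2}, $\spec(\sqrt{\L})\subset\{n+1/2:n\in\N\}$, so the family $\{e^{\pm it(n+1/2)}\}_{n\ge 0}$ is orthogonal in $L^{2}([0,2\pi])$.

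First I would fix the observation time $T=2\pi$, and write the solution of~\eqref{eq:equation-L-zoll} explicitly in Fourier form as recalled just above the statement,
\begin{equation*}
u(t)=\sum_{n\ge 0}\bigl(e^{it(n+1/2)}u_{n}^{+}+e^{-it(n+1/2)}u_{n}^{-}\bigr),\quad u_{n}^{\pm}\in\tilde{E}_{n},
\end{equation*}
with $u_{n}^{+}+u_{n}^{-}=u_{n}^{0}$ and $i(n+1/2)(u_{n}^{+}-u_{n}^{-})=u_{n}^{1}$. Differentiating in~$t$, applying Fubini, and using the aforementioned orthogonality in~$L^{2}([0,2\pi])$, the observation integral decouples:
\begin{equation*}
\int_{0}^{2\pi}\!\!\int_{\Sigma}a|\pt u|^{2}\dx\,\dt=2\pi\int_{\Sigma}a(x)\sum_{n\ge 0}(n+\tfrac{1}{2})^{2}\bigl(|u_{n}^{+}(x)|^{2}+|u_{n}^{-}(x)|^{2}\bigr)\dx.
\end{equation*}

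Next I would swap the sum and the spatial integral (legal by Tonelli, all terms non-negative) and apply the hypothesis that $a$ observes $\L$-eigenfunctions to each $u_{n}^{\pm}\in\tilde{E}_{n}$; this yields the lower bound
\begin{equation*}
2\pi\sum_{n\ge 0}(n+\tfrac{1}{2})^{2}\int_{\Sigma}a\bigl(|u_{n}^{+}|^{2}+|u_{n}^{-}|^{2}\bigr)\dx\gtrsim\sum_{n\ge 0}(n+\tfrac{1}{2})^{2}\bigl(\|u_{n}^{+}\|_{L^{2}}^{2}+\|u_{n}^{-}\|_{L^{2}}^{2}\bigr).
\end{equation*}
Finally I would invoke the parallelogram identity $|u_{n}^{+}|^{2}+|u_{n}^{-}|^{2}=\tfrac{1}{2}(|u_{n}^{+}+u_{n}^{-}|^{2}+|u_{n}^{+}-u_{n}^{-}|^{2})$, substitute $u_{n}^{+}+u_{n}^{-}=u_{n}^{0}$ and $u_{n}^{+}-u_{n}^{-}=(i(n+1/2))^{-1}u_{n}^{1}$, and recognize the resulting expression as $E(u,0)$ by the spectral characterization of the $H^{1}\times L^{2}$ norm in terms of $\L$-eigenfunctions recalled just above the lemma.

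No essential obstacle is expected: the proof is a straightforward transcription of the spherical argument, since the spectral theorem of Duistermaat--Guillemin provides exactly the arithmetic progression structure on which the orthogonality trick hinges. The mild differences (a single series instead of the $d=1$/$d\ge 2$ split, no need to handle a linear-in-$t$ zero mode since $n+\tfrac{1}{2}\ne 0$ for all $n\ge 0$) actually simplify the bookkeeping slightly.
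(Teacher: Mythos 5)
Your proposal is correct and follows essentially the same route as the paper: expand the solution in $\L$-eigenfunctions, use the orthogonality of $\{e^{\pm it(n+1/2)}\}_{n\in\N}$ in $L^{2}([0,2\pi])$ to decouple time and space, apply the observability of $\L$-eigenfunctions to each $u_{n}^{\pm}$, and recombine to recover $E(u,0)$. No gaps.
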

\begin{proof}
Recall that a solution to~\eqref{eq:equation-L-zoll} is of the form
\begin{equation*}
u(t) = \sum_{n \ge 0} \big( e^{it(n+1/2)} u^+_n + e^{-it(n+1/2)} u^-_n \big),
\end{equation*}
where $ u^\pm_n \in \tilde{E}_n $. Now that~$ a $ observes $ \L $-eigenfunctions, which implies
\begin{equation*}
\|a^{1/2} u^\pm_n\|_{L^2(\Sigma)} \gtrsim \|u^\pm_n\|_{L^2(\Sigma)},
\end{equation*}
we have, by the orthogonality of $ \{e^{\pm i(n+1/2)t}\}_{n \in \N} $ in $ L^2([0,2\pi]) $, and a similar argument to that of Lemma~\ref{lem:Observability-Spherical-Harmonics},
\begin{align*}
\int_{0}^{2\pi}\int_{\Sigma} a|\pt u|^{2}\dx\,\dt & =\int_{\Sigma} a(x) \int_{0}^{2\pi} \Big|\sum_{n \ge 0} (n+1/2) (e^{it(n+ 1/2)}u_{n}^{+} - e^{-it(n+1/2)} u_{n}^{-})\Big|^{2}\dt\,\dx\\
& = 2\pi \int_{\Sigma} a(x) \sum_{n \ge 0} \big|(n+1/2)u_{n}^{+}|^{2}+|(n+1/2)u_{n}^{-}\big|^{2}\dx\\
& = 2\pi \sum_{n \ge 0} (n+1/2)^2 \int_{\Sigma} a(x) \big(|u_{n}^{+}|^{2} + |u_{n}^{-}|^{2}\big) \dx \\
& \gtrsim \sum_{n \ge 0} (n+1/2)^2 \int_{\Sigma} \big(|u_{n}^{+}|^{2} + |u_{n}^{-}|^{2}\big) \dx \\
& \gtrsim E(u,0).
\end{align*}
\end{proof}

\subsection{Observability of $ \L $-Eigenfunctions}

This sections aims to prove the observability of $ \L $-eigenfunctions, which concludes Theorem~\ref{thm:Stability-Zoll-non-GCC}.
\begin{prop}
\label{prop:observability-L-eigenfunction}
Let~$ \Sigma $ be a Zoll surface of revolution, then $ a(x) = 1_{\Sigma^+}(x) $ observes $ \L $-eigenfunctions.
\end{prop}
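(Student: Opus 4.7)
The plan is a proof by contradiction. Suppose the observability fails. Then one can extract a sequence $n_j \to \infty$ and $L^2$-normalized $\L$-eigenfunctions $u_{n_j} \in \tilde{E}_{n_j}$ with $\|1_{\Sigma^+} u_{n_j}\|_{L^2(\Sigma)} \to 0$. Setting $h_j = (n_j + 1/2)^{-1}$, the definition of $\L$ together with~\eqref{eq:K-norm} makes each $u_{n_j}$ a quasimode of $-h_j^2 \Delta$ for the value~$1$:
\begin{equation*}
(-h_j^2 \Delta - 1) u_{n_j} = O(h_j^2)_{L^2(\Sigma)}.
\end{equation*}
Standard semiclassical defect measure theory yields, after a further extraction, a probability measure $\mu$ on $T^*\Sigma$, supported on $S^*\Sigma$, invariant under the geodesic flow, and vanishing on $T^*\Sigma^+$. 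Since every geodesic of $\Sigma$ except the equator $\Gamma$ enters $\Sigma^+$ within one period, flow-invariance forces $\supp \mu \subset \{(\ell_0,\varphi,0,\pm 1) : \varphi \in \S^1\}$.

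This concentration on the equatorial cotangent circle is not by itself enough to conclude, so I will next perform a finer analysis by splitting each $u_{n_j}$ along the harmonic decomposition of Proposition~\ref{prop:Laplacian-Eigenfunction-Evolution-Surface}. In the isothermal coordinates $(x,\varphi)$,
\begin{equation*}
u_{n_j} = \sum_{k \in \Z} e^{ik\varphi}\, \tilde{w}_{n_j,k}, \quad \tilde{w}_{n_j,k} \in \tilde{A}_{n_j,k}.
\end{equation*}
The $L^2(\d\varphi,\S^1)$-orthogonality of $\{e^{ik\varphi}\}$ combined with the support property of $\mu$ (the fiber variable dual to $\varphi$ being $k$ with $|k| \sim h_j^{-1}$ on the support of $\mu$) will allow me to reduce the problem to the following 1D statement: for every admissible sequence $(n,k(n))$ with $1 - h_n^2 k(n)^2 = o(1)$ and every $L^2(\rho^2\,\dx)$-normalized $\tilde{w}_n \in \tilde{A}_{n,k(n)}$,
\begin{equation*}
\liminf_{n \to \infty} \|1_{x > 0}\, \tilde{w}_n\|_{L^2(\rho^2\,\dx)}^{2} > 0.
\end{equation*}

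By the definition of $\L$ together with~\eqref{1D-Laplacian-Regular}, each $\tilde{w}_n$ satisfies a one-dimensional semiclassical Schr\"odinger equation with potential $V(x) = 1 - \rho(x)^2 = cx^2 + O(x^3)$ near the origin:
\begin{equation*}
(-h_n^2\, \partial_x^2 + V)\, \tilde{w}_n = E_n\, \tilde{w}_n + O(h_n^2)_{L^2 \to L^2}\, \tilde{w}_n, \quad E_n = 1 - h_n^2 k(n)^2.
\end{equation*}
A Lithner--Agmon exponential decay estimate will confine $\tilde{w}_n$ to a shrinking neighborhood of $x = 0$, rendering the density $\rho^2$ irrelevant in the limit. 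I then split according to the size of $E_n$. In the low-energy regime $E_n = O(h_n)$, the rescaling $z = c^{1/4} h_n^{-1/2} x$ transforms the equation into a small perturbation of the harmonic oscillator $-\partial_z^2 + z^2$; a compactness argument identifies the limiting profile with a Hermite eigenfunction, which is either even or odd, hence has mass equally distributed on $\{z > 0\}$ and $\{z < 0\}$. In the high-energy regime $E_n \gg h_n$, the rescaling $z = c^{1/2} E_n^{-1/2} x$ together with the new semiclassical parameter $\h_n = c^{1/2} E_n^{-1} h_n \to 0$ yields a semiclassical equation whose $\h_n$-defect measure is supported on the circle $\{(z,\zeta) : z^2 + \zeta^2 = 1\}$ and invariant under the Hamiltonian flow of $z^2 + \zeta^2$; the rotational symmetry of this flow forces the pushforward onto the $z$-axis to be invariant under $z \mapsto -z$, again producing equidistribution.

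In both regimes one obtains $\liminf_n \|1_{x > 0} \tilde{w}_n\|_{L^2}^2 \ge 1/2$, contradicting the assumption. The hard part will be the reduction to a single admissible mode in paragraph~2: turning the failure of aggregate observability for $u_{n_j}$ into failure of observability for a single sequence $\tilde{w}_{n_j,k(n_j)}$ requires a uniform control on the tail of the $k$-expansion, which will rest on the combined use of the Agmon localization in $x$, the orthogonality in $\varphi$, and the concentration of $\mu$ on $\{\xi = 0\}$.
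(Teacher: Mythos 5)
Your proposal is correct and follows essentially the same route as the paper's proof: contradiction, concentration of the semiclassical measure on the equatorial circle $\{x=0,\xi=0\}$, Fourier decomposition in $\varphi$ with truncation to the admissible modes $|1-h^2k^2|=o(1)$, Lithner--Agmon localization to drop the weight $\rho^2$, and the two rescaling regimes $E=O(h)$ (harmonic oscillator, parity of Hermite functions) and $E\gg h$ (second semiclassical measure on the circle $z^2+\zeta^2=1$, invariant under rotation). The only omission is the case where the frequencies $n_j$ remain bounded, so that no subsequence with $n_j\to\infty$ can be extracted; the paper disposes of this separately by a finite-dimensional compactness argument together with the fact that nodal sets of (finite sums of) Laplacian eigenfunctions have measure zero.
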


We prove this proposition with an argument by contradiction. If the observability of $ \L $-eigenfunctions does not hold, then there exists a sequence of $ \L $-eigenfunctions $ u_{n_m} \in \tilde{E}_{n_m} $ such that, as $ m \to \infty $,
\begin{equation*}
\|u_{n_m}\|_{L^2(\Sigma)} = 1, \quad \|1_{\Sigma^+} u_{n_m}\|_{L^2(\Sigma)} = o(1).
\end{equation*}
If $ \{n_m\}_{m \ge 0} $ is bounded, then $ \tilde{E} := \oplus_{m \ge 0} \tilde{E}_{n_m} $ is a finite dimensional vector subspace of~$ L^2(\Sigma) $, consisting only of low frequencies, on which the estimate holds, for any $ N > 0 $,
\begin{equation*}
\|u\|_{L^2(\Sigma)} \lesssim \|u\|_{H^{-N}(\Sigma)}.
\end{equation*}
Therefore, $ (\tilde{E},\|\cdot\|_{L^2(\Sigma)}) $ is relatively compact, and the bounded sequence~$ \{u_{n_m}\}_{m \ge 0} $ admits a limit point $ u \in \tilde{E} $, that is, $ u_{n_m} \to u $ in~$ L^2(\Sigma) $, and hence $ 1_{\Sigma^+} u_{n_m} \to 1_{\Sigma^+} u $ in~$ L^2(\Sigma) $. Consequently,
\begin{equation*}
\|u\|_{L^2(\Sigma)} = 1, \quad \|1_{\Sigma^+} u\|_{L^2(\Sigma)} = 0.
\end{equation*}
However this is impossible, for~$ u $ is a finite sum of Laplacian eigenfunctions, which does not vanish only on a set of zero measure.

We are left to consider the case where~$ \{n_m\}_{m \ge 0} $ is unbounded. Up to a subsequence, we may assume that~$ n_m $ increases to~$ \infty $. For simplicity of notation, we drop the~$ m $ subindex, and write $ n = n_m $, and introduce the semiclassical parameter
\begin{equation*}
h = (n + 1/2)^{-1}.
\end{equation*}
We then write $ u(h) = u_{n} $, which satisfies $ \L u(h) = h^{-2} u(h) $, and consequently
\begin{equation}
\label{eq:equation-Helmholtz-u(h)}
(- h^2 \Delta -1) u(h) = - h^2 K u(h) = O(h^2)_{L^2(\Sigma)}.
\end{equation}

\subsubsection{Concentration of $\L$-Eigenfunctions}
\label{sub:Concentration-Eigenfunction}

We study the semiclassical measures of the sequence~$ u(h) $ and show that it concentrates on the equator. This argument is rather standard, we refer to, for example~\cite{Burq-1}, see also~\cite{Zworski}. We recall the definition of the semiclassical measure and some of its basic properties in Appendix~\ref{sec:SDM}. 

We extract a subsequence if necessary, and assume in addition that~$ u(h) $ is pure (see Remark~\ref{remark:pure} for the definition).

\begin{prop}
Let~$ \mu $ be the $ h $-semiclassical measure of~$ u(h) $, then
\begin{equation}
\label{eq:u(h)-semiclassical-measure-concentration}
\supp\mu
\subset S^{*}\Sigma\cap\{x = 0, \xi = 0\}
=\big\{(0,\varphi,0,\pm1):\varphi\in \S^{1}\big\}.
\end{equation}
\end{prop}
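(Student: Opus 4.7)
The plan is the standard three-ingredient package for semiclassical defect measures of quasi-modes. Equation~\eqref{eq:equation-Helmholtz-u(h)} says that $u(h)$ is a quasi-mode of $-h^2\Delta$ at energy~$1$; write $p(x,\xi) = |\xi|_g^2$ for the principal symbol, so $S^*\Sigma = \{p = 1\}$. First I would establish two classical facts: (i) \emph{localization on the characteristic variety}, i.e.\ $\supp \mu \subset S^*\Sigma$, by testing against a symbol $b \in C_c^\infty(T^*\Sigma)$ supported away from $\{p = 1\}$ and writing $b = (p-1)q$ with $q \in C_c^\infty$ to exploit the quasi-mode equation; and (ii) \emph{invariance under the geodesic flow}, via the commutator identity $[-h^2\Delta, \mathrm{Op}_h(b)] = \tfrac{h}{i}\mathrm{Op}_h(\{p,b\}) + O(h^2)_{L^2 \to L^2}$, which combined with~\eqref{eq:equation-Helmholtz-u(h)} yields $\int \{p,b\}\, d\mu = 0$ for every $b \in C_c^\infty(T^*\Sigma)$; on $S^*\Sigma$ this is the geodesic flow.

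The third ingredient uses the hypothesis $\|1_{\Sigma^+}u(h)\|_{L^2(\Sigma)} = o(1)$: for any $\chi \in C_c^\infty(\Sigma^+)$ one has $\|\chi u(h)\|_{L^2} \le \|\chi\|_\infty \, \|1_{\Sigma^+}u(h)\|_{L^2} = o(1)$, and testing against symbols of the form $\chi(x) b(x,\xi)$ forces $\mu$ to vanish above $\Sigma^+$. Combined with step (i), this yields $\supp \mu \subset S^*\Sigma \cap \pi^{-1}(\{x \le 0\})$.

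To close the argument I would invoke the geometry of Proposition~\ref{prop:Geometry-Zoll-Surface-Revolution}. If $(x_0,\varphi_0,\xi_0,\eta_0)$ belongs to $\supp \mu$, the flow invariance from step (ii) implies that the full geodesic through this point stays over $\{x \le 0\}$; but by part~(2) of that proposition the only such geodesic on $\Sigma$ is the equator $\Gamma$. Since $\rho(0) = 1$, the metric at $x = 0$ is Euclidean in the $(x,\varphi)$ coordinates, so the unit cotangent vectors tangent to $\Gamma$ at $(0,\varphi)$ are precisely $(0,\pm 1)$, giving~\eqref{eq:u(h)-semiclassical-measure-concentration}.

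None of the three steps is individually delicate — this is the classical propagation package for semiclassical measures. The one point deserving attention is that $a = 1_{\Sigma^+}$ is not smooth, but the pointwise bound $|\chi| \le \|\chi\|_\infty \cdot 1_{\Sigma^+}$ for $\chi \in C_c^\infty(\Sigma^+)$ removes any need for microlocal regularization of the damping. What the proposition does \emph{not} rule out, and what constitutes the real obstruction to the main theorem, is the possibility that $\mu$ carries nonzero mass on the two rogue lifts of the equator — that is the task taken up in the subsequent sections via a second microlocalization in the variable~$x$.
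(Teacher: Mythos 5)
Your proposal is correct and follows essentially the same route as the paper: localization of $\mu$ on $S^*\Sigma$ and invariance under the geodesic flow (which the paper simply cites from Theorem~\ref{thm:prop-sm}), vanishing of $\mu$ over $\Sigma^+$ from the hypothesis $\|1_{\Sigma^+}u(h)\|_{L^2}=o(1)$, and then Proposition~\ref{prop:Geometry-Zoll-Surface-Revolution} to conclude that the only flow-invariant closed subset of $S^*\Sigma$ avoiding $T^*\Sigma^+$ is the lift of the equator, identified as $\{(0,\varphi,0,\pm1)\}$ since $\rho(0)=1$. No gaps; your closing remark about the remaining obstruction (mass on the lifts of $\Gamma$) correctly identifies what this proposition does not settle.
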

\begin{proof}
Recall that $ u(h) $ satisfies the equation $ (- h^2 \Delta -1) u(h) = O(h^2)_{L^2(\Sigma)} $. The principal symbol of $ -h^2 \Delta - 1 $ (in the semiclassical sense) is $ p(x,\xi) = g^{-1}_x(\xi,\xi) - 1$, where~$ g^{-1} $ is the inverse matrix of~$ g $. By Theorem~\ref{thm:prop-sm}, 
\begin{equation*}
\supp \mu \subset T^*M \cap \{p(x,\xi) = 0\} = S^*M,
\quad H_p \mu = 0.
\end{equation*} 
Now that~$ H_p $ generates the (co)-geodesic flow on~$ S^*M $, we see that~$ \mu $ is invariant by the geodesic flow. Moreover, our hypothesis $ \|1_{\Sigma^+} u(h)\| = o(1)_{L^2(\Sigma)} $ implies that 
\begin{equation*}
\supp \mu \cap T^*\Sigma^+ = \emptyset.
\end{equation*}
Recall that all geodesics enter~$ \Sigma^+ $, except for the equator,
\begin{equation*}
\supp \mu \subset S^*M \backslash \bigcup_{t \in \R} e^{tH_p} S^*\Sigma^+ = S^{*}\Sigma\cap\{x = 0, \xi = 0\}.
\end{equation*}
We conclude by a direct calculation, using $ g|_{\Gamma} = \dx^2 + \d\varphi^2 $.
\end{proof}

\begin{cor}
Let $\epsilon>0$, and $\chi_{\epsilon}\in C_{c}^{\infty}(\R)$ be such that $ 1_{[-\epsilon,\epsilon]} \le \chi_\epsilon \le 1_{[-2\epsilon,2\epsilon]} $. Then
\begin{equation*}
u(h) = \chi_{\epsilon}(1-h^{2}D_{\varphi}^{2}) u(h) + o(1)_{L^2(\Sigma)},
\end{equation*}
where $ \chi_\epsilon(1-h^2D_\varphi^2) $ is defined by functional calculus, and is thus of (semiclassical) principal symbol $ \chi_\epsilon(1-\theta^2) $ (see for example~\cite{D-S}).
\end{cor}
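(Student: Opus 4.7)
The plan is to reformulate the claim as $\psi(1-h^{2}D_{\varphi}^{2})\,u(h) = o(1)_{L^{2}(\Sigma)}$, where $\psi := 1-\chi_{\epsilon}\in C^{\infty}(\R)$ is smooth, bounded, and satisfies $\psi(0)=0$. The key observation is that the principal symbol $\psi(1-\theta^{2})$ of $\psi(1-h^{2}D_{\varphi}^{2})$ vanishes identically on $\supp\mu\subset\{\theta^{2}=1\}$, so morally this should follow from the defining convergence $\|\mathrm{Op}_{h}(b)\,u(h)\|_{L^{2}}^{2}\to\int|b|^{2}\,d\mu$ of the semiclassical measure. The obstacle, which absorbs most of the work, is that $\psi(1-\theta^{2})$ is not compactly supported and is therefore not a legal test symbol; I will remove this obstacle with two independent truncations of $u(h)$.

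First, I localize $u(h)$ in the spectrum of $-h^{2}\Delta$ near $1$. Choose $\phi\in C_{c}^{\infty}(\R)$ with $\phi\equiv 1$ in a neighbourhood of $1$, and factor $1-\phi(s)=(s-1)\,q(s)$ with $q$ smooth and bounded on $\R$. Then the quasi-mode identity~\eqref{eq:equation-Helmholtz-u(h)} together with functional calculus gives
\[
(1-\phi(-h^{2}\Delta))\,u(h) = q(-h^{2}\Delta)(-h^{2}\Delta-1)\,u(h) = O(h^{2})_{L^{2}}.
\]
Second, I localize in the spectrum of $-h^{2}D_{\varphi}^{2}$. Decomposing $u(h)=\sum_{k\in\Z}e^{ik\varphi}\,w_{k}^{(h)}$ and applying Corollary~\ref{cor:rotation-number-less-than-energy} within each subspace $\lambda^{2}\in I_{n}\subset((n+1/2)^{2}-A,(n+1/2)^{2}+A)$, one finds $h|k|\le 1+O(h^{2})$ whenever $w_{k}^{(h)}\ne 0$. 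Hence the spectrum of $1-h^{2}D_{\varphi}^{2}$ on $u(h)$ is contained in $[-Ch^{2},1]$, and choosing $\tilde\psi\in C_{c}^{\infty}(\R)$ with $\tilde\psi\equiv\psi$ on $[-1,1]$ yields, for $h$ small enough, $\psi(1-h^{2}D_{\varphi}^{2})\,u(h)=\tilde\psi(1-h^{2}D_{\varphi}^{2})\,u(h)$.

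Third, since $[\Delta,D_{\varphi}]=0$, a joint functional calculus applies, and the composition
\[
A_{h} := \phi(-h^{2}\Delta)\,\tilde\psi(1-h^{2}D_{\varphi}^{2})
\]
is a semiclassical pseudodifferential operator with compactly supported principal symbol
\[
a(x,\varphi,\xi,\theta) = \phi\bigl(g^{-1}_{x}(\xi,\xi)\bigr)\,\tilde\psi(1-\theta^{2}) \in C_{c}^{\infty}(T^{*}\Sigma).
\]
At any point of $\supp\mu=\{(0,\varphi,0,\pm 1):\varphi\in\S^{1}\}$ the normalization $\rho(0)=1$ gives $g_{0}^{-1}(\xi,\xi)=\xi^{2}+\theta^{2}=1$, so $a|_{\supp\mu} = \phi(1)\,\tilde\psi(0) = 0$. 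The semiclassical measure property recalled in Appendix~\ref{sec:SDM} then yields $\|A_{h}u(h)\|_{L^{2}}^{2}\to\int|a|^{2}\,d\mu = 0$.

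Putting the three steps together, using the $L^{2}$-boundedness of $\psi(1-h^{2}D_{\varphi}^{2})$ and its commutation with $\phi(-h^{2}\Delta)$,
\[
\psi(1-h^{2}D_{\varphi}^{2})\,u(h) = \phi(-h^{2}\Delta)\,\psi(1-h^{2}D_{\varphi}^{2})\,u(h) + O(h^{2}) = A_{h}\,u(h) + O(h^{2}) = o(1)_{L^{2}},
\]
which is equivalent to the stated identity. The main technical point is the construction of the compactly supported truncation $\tilde\psi$; this is exactly where Corollary~\ref{cor:rotation-number-less-than-energy} is used, and it is what reduces the non-compactly-supported functional calculus to a legitimate pseudodifferential statement suitable for the semiclassical measure machinery.
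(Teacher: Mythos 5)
Your proof is correct, but it follows a genuinely different route from the paper's. The paper sets $v(h)=u(h)-\chi_{\epsilon}(1-h^{2}D_{\varphi}^{2})u(h)$, observes that $v(h)$ satisfies the same quasi-mode equation $(-h^{2}\Delta-1)v(h)=O(h^{2})_{L^{2}}$ because $D_{\varphi}$ commutes with $\Delta$, and then invokes the appendix machinery: $v(h)$ is $h$-oscillating (Example~\ref{example:h-oscillating}), its semiclassical measure is $\bigl(1-\chi_{\epsilon}(1-\theta^{2})\bigr)^{2}\mu=0$ since $\supp\mu\subset\{\theta^{2}=1\}$, and Proposition~\ref{prop:h-oscillating-imply-Ltwo-convergence} upgrades this to strong $L^{2}$ convergence. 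You instead handle the two sources of non-compactness of the symbol $\bigl(1-\chi_{\epsilon}\bigr)(1-\theta^{2})$ by explicit spectral truncations: the escape of $g^{-1}_{x}(\xi,\xi)$ to infinity is killed by the factorization $1-\phi(s)=(s-1)q(s)$ together with the quasi-mode identity (which is essentially a hands-on rederivation of the $h$-oscillation used in the paper), and the escape of $\theta$ to infinity is killed by Corollary~\ref{cor:rotation-number-less-than-energy}, which bounds $h|k|$ by $1+O(h^{2})$ on the components of $u(h)$ --- an input the paper's proof of this corollary does not need. What your route buys is that every symbol fed to the semiclassical measure is honestly in $C_{c}^{\infty}(T^{*}\Sigma)$, so the argument is self-contained modulo the standard functional/symbolic calculus for $\phi(-h^{2}\Delta)$ and $\tilde\psi(1-h^{2}D_{\varphi}^{2})$; what the paper's route buys is brevity, since the $h$-oscillation lemmas are already established in the appendix and are reused elsewhere. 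Both arguments rely on the same cited fact that $\chi(1-h^{2}D_{\varphi}^{2})$ is a semiclassical pseudodifferential operator with principal symbol $\chi(1-\theta^{2})$, so neither is weaker on that point.
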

\begin{proof}
Let $ v(h) = u(h) - \chi_{\epsilon}(1-h^{2}D_{\varphi}^{2}) u(h) $, which is pure. Now that~$ D_\varphi $ commute with~$ -\Delta $, by~\eqref{eq:equation-Helmholtz-u(h)} we see that~$ v(h) $ satisfies $ (-h^{2}\Delta-1)v(h) = O(h^{2})_{L^{2}} $.
Therefore~$ v(h) $ is $ h $-oscillating by Example~\ref{example:h-oscillating}. And by Proposition~\ref{prop:h-oscillating-imply-Ltwo-convergence}, to conclude, it suffices to show that the semiclassical measure~$ \nu $ of $ v(h) $ vanishes. Indeed, $ \nu = \big( 1 - \chi_{\epsilon}(1-\theta^2) \big)^2 \mu = 0 $, since~$ \mu  $ is supported in $ 1 - \theta^2 = 0 $.
\end{proof}

As a consequence, in particular, for any $ \epsilon > 0 $, when~$ h $ is sufficiently small,
\begin{equation*}
\| u(h) - \chi_{\epsilon}(1-h^{2}D_{\varphi}^{2}) u(h) \|_{L^2(\Sigma)} \le \epsilon.
\end{equation*}
Fixing a sequence of $ \epsilon \to 0 $, we can find a sequence of~$ h = h_\epsilon \to 0$, so that
\begin{equation}
\label{eq:decomposition-u(h)-L-eigenfunction}
u(h) = \sum_{k \in Z_n(\epsilon)} e^{ik\varphi} \tilde{w}_{n,k} + O(\epsilon)_{L^2(\Sigma)},
\end{equation}
where $ Z_n(\epsilon) = \{k \in \Z : |1-h^2k^2| \le \epsilon \} $, and $ \tilde{w}_{n,k} \in \tilde{A}_{n,k} $.

For later convenience, we introduce the notion of admissible sequences.
\begin{defn}
\label{def:admissible}
A 4-tuple $ (\epsilon,h,k,\tilde{w}) $ is called admissible if
\begin{enumerate}[nosep]
\item $ \epsilon > 0 $, $ h = (n+1/2)^{-1} $ for some $ n \in \N $;
\item $ k \in Z_n(\epsilon) $, $ \tilde{w} \in \tilde{A}_{n,k} $.
\end{enumerate} 
A sequence of 4-tuple $ (\epsilon,h,k,\tilde{w}) $ (where by an abuse of notation, we omit the index of the sequence for simplicity) is called admissible if
\begin{enumerate}[nosep]
\item each term of the sequence is an admissible 4-tuple;
\item $ \epsilon \to 0 $, $ h \to 0 $.
\end{enumerate}
\end{defn}

\subsubsection{Reduction to Observability of 1-D Stationary Schr\"{o}dinger Equation} \label{sub:Reduction-to-1-D-Harmonic-Oscillator}

\begin{prop}
\label{prop:observability-schrodinger}
There exists $ \epsilon_0 > 0 $, $ h_0 > 0 $ and $ C > 0 $, such that for $ 0 < \epsilon < \epsilon_0 $, $ 0 < h < h_0 $, if a 4-tuple $ (\epsilon,h,k,\tilde{w}) $ is admissible, then we have the following observability,
\begin{equation*}
\|1_{x>0} \tilde{w}\|_{L^2(\rho^2\dx)} \ge C \|\tilde{w}\|_{L^2(\rho^2\dx)}.
\end{equation*}
\end{prop}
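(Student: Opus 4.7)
The plan is to argue by contradiction. If the conclusion fails, one can extract an admissible sequence $(\epsilon,h,k,\tilde{w})$ with $\epsilon\to 0$, $h\to 0$, normalised so that $\|\tilde{w}\|_{L^2(\rho^2\dx)}=1$ and $\|1_{x>0}\tilde{w}\|_{L^2(\rho^2\dx)}=o(1)$. By Proposition~\ref{prop:Laplacian-Eigenfunction-Evolution-Surface} and the definition of $\tilde{A}_{n,k}$, each $\tilde{w}$ is a finite linear combination of solutions to $-\partial_x^2 w+k^2w=\lambda^2\rho^2 w$ with $\lambda^2\in I_n$; hence, setting $E=1-h^2k^2$ and $V(x)=1-\rho(x)^2=cx^2+O(x^3)$ near $x=0$ (by Remark~\ref{remark:local-geometry-equator}), $\tilde{w}$ satisfies the semiclassical stationary Schr\"odinger equation
\begin{equation*}
(-h^2\partial_x^2+V)\tilde{w}=E\tilde{w}+O(h^2)_{L^2\to L^2}\tilde{w},
\end{equation*}
with $|E|\le\epsilon=o(1)$ by the admissibility condition.

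Before the two scaling arguments, I would establish two preliminary facts. First, a Lithner--Agmon estimate: since $V\ge c_0>0$ outside any fixed neighbourhood of $x=0$ (because $\rho<1$ there) and $E=o(1)$, one gets uniform exponential decay $|\tilde{w}(x)|\lesssim e^{-\alpha|x|/h}$ for $|x|$ bounded below by a small constant. Consequently $\tilde w$ is concentrated near $x=0$, where $\rho^2=1+O(x^2)$, and the weight $\rho^2$ may be dropped up to an $o(1)$ correction: $\|\tilde{w}\|_{L^2(\dx)}=1+o(1)$ and $\|1_{x>0}\tilde{w}\|_{L^2(\dx)}=o(1)$. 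Second, pairing the equation with $\tilde{w}$ in $L^2(\R)$ and using $V\ge 0$ on $|x|\le 1$, together with the Agmon tail, yields the a priori lower bound $E\gtrsim -h^2$.

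I would then split according to the size of $E$.

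\textbf{Case 1: $E=O(h)$.} I would apply the scaling $z=c^{1/4}h^{-1/2}x$ and set $v(z)=c^{-1/8}h^{1/4}\tilde{w}(x)$, which preserves the $L^2$ norm. The Taylor expansion $V=cx^2+O(x^3)$ becomes $z^2+O(h^{1/2}z^3)$, and after division by $h$ the equation reads
\begin{equation*}
(-\partial_z^2+z^2)v=Fv+o(1)_{L^2\to L^2}v,\qquad F=E/(c^{1/2}h)=O(1).
\end{equation*}
Up to a subsequence $F\to F_\infty$, and the Agmon estimate (which persists after scaling, with $z^2$ playing the role of the confining potential) provides $L^2$ compactness, so $v$ converges in $L^2(\R)$ to a nontrivial eigenfunction $v_\infty$ of the harmonic oscillator $-\partial_z^2+z^2$. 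Hermite functions have definite parity, hence $\|1_{z>0}v_\infty\|_{L^2}=\frac12\|v_\infty\|_{L^2}\ne 0$, contradicting $\|1_{x>0}\tilde{w}\|_{L^2}=\|1_{z>0}v\|_{L^2}=o(1)$.

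\textbf{Case 2: $h\ll E=o(1)$.} I would apply the scaling $z=c^{1/2}E^{-1/2}x$ with new semiclassical parameter $\h=c^{1/2}E^{-1}h=o(1)$, and rescale $\tilde{w}$ accordingly. Dividing by $E$, the equation becomes
\begin{equation*}
(-\h^2\partial_z^2+z^2+o(1))v=v+o(\h)_{L^2\to L^2}v,
\end{equation*}
with principal symbol $p(z,\zeta)=\zeta^2+z^2-1$. The $\h$-semiclassical measure $\nu$ of $v$ is then supported on $\{z^2+\zeta^2=1\}$ and invariant under $H_p$, whose flow is rotation of the circle; hence $\nu$ is the uniform measure on the circle, and its projection on the $z$-axis is symmetric about $z=0$. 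A standard $\h$-oscillation argument (available because $v$ solves a semiclassical Schr\"odinger equation with bounded energy) shows that the semiclassical measure captures all the mass, so again $\|1_{z>0}v\|_{L^2}\to\frac{1}{\sqrt 2}\|v\|_{L^2}\ne 0$, a contradiction.

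The main obstacle is to make the Agmon localisation uniform in the admissible sequence and to justify the convergences cleanly in both scalings --- in particular, verifying the $\h$-oscillation and absence of escape to infinity in Case~2, and controlling the cubic error $O(h^{1/2}z^3)$ on an unbounded $z$-window in Case~1. Both issues reduce to good off-diagonal exponential decay estimates for the rescaled problem, which is why the Lithner--Agmon step is set up carefully at the beginning.
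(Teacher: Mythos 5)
Your proposal is correct and follows the same overall architecture as the paper's proof: Lithner--Agmon estimates to localize near $x=0$ and discard the weight $\rho^{2}$, the a priori bound $E\gtrsim-h^{2}$, and the dichotomy $E=O(h)$ (harmonic-oscillator parity) versus $E\gg h$ (rotation-invariant semiclassical measure on the circle $z^{2}+\zeta^{2}=1$). The one genuinely different sub-step is in Case~1: you apply the compactness/Hermite-expansion argument directly to the $\L$-eigenfunction $\tilde{w}$, whereas the paper first treats each Laplacian eigenfunction $w_{\lambda,k}\in A_{\lambda,k}$ with $\lambda^{2}\in I_{n}$ separately, shows each is $v_{i_{0}}+o(1)_{L^{2}}$ with the \emph{same} $i_{0}$ (determined by $F$ alone, not by $\lambda$), and then uses the orthogonality $A_{\lambda_{1},k}\perp A_{\lambda_{2},k}$ of Corollary~\ref{cor:orthogonality-A(lamnbda,k)} to conclude that $\tilde{A}_{n,k}$ is one-dimensional along the sequence, so that $\tilde{w}$ itself equals $v_{i_{0}}+o(1)_{L^{2}}$. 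Your direct route is legitimate because $\tilde{w}$ satisfies the same equation $(-h^{2}\partial_{x}^{2}+V)\tilde{w}=E\tilde{w}+O(h^{2})_{L^{2}\to L^{2}}\tilde{w}$ with a remainder bound uniform over the sum (the paper proves exactly this, using the same orthogonality), and it yields a shorter argument; the paper's detour additionally yields the structural fact $\dim\tilde{A}_{n,k}\le 1$ in this regime. The details you defer --- the cutoff $\chi(h^{\delta}z)$ with $0<\delta<1/6$ to tame the cubic error $O(h^{1/2}z^{3})$, the uniform confinement of the rescaled potential away from the origin, and the $\h$-oscillation of the truncated function in Case~2 --- are precisely the points the paper settles with the inhomogeneous Lithner--Agmon estimates of Section~\ref{sec:Lithner-Agmon-Estimates}, so your plan closes. (Minor slip: parity gives $\|1_{z>0}v_{\infty}\|_{L^{2}}=\tfrac{1}{\sqrt{2}}\|v_{\infty}\|_{L^{2}}$, not $\tfrac12\|v_{\infty}\|_{L^{2}}$; this does not affect the contradiction.)
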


If this proposition is proven, then we can finish the proof of Proposition~\ref{prop:observability-L-eigenfunction}, and thus prove Theorem~\ref{thm:Stability-Zoll-non-GCC}. Indeed, we use the decomposition~\eqref{eq:decomposition-u(h)-L-eigenfunction}, \eqref{eq:Ltwo-decomposition-Sigma}, and the orthogonality of $ \{e^{ik\varphi}\}_{k\in\Z} $ in~$ L^2(\d\varphi,S^1) $, when $ \epsilon $ and $ h = h_\epsilon $ are sufficiently small,
\begin{align*}
\|1_{\Sigma^+}u(h)\|_{L^2(\Sigma)}^2
& \gtrsim \big\|1_{\Sigma^+} \sum_{k \in Z_n(\epsilon)} e^{ik\varphi} \tilde{w}_{n,k} \big\|_{L^2(\Sigma)}^2 - \epsilon^2 \\
& \gtrsim \int_{\R} 1_{x>0}  \int_{\S^1} \big| \sum_{k \in Z_n(\epsilon)} e^{ik\varphi} \tilde{w}_{n,k} \big|^2 \,\d\varphi \,\rho^2\dx - \epsilon^2 \\
& \gtrsim \int_{\R} 1_{x>0} \sum_{k \in Z_n(\epsilon)} |\tilde{w}_{n,k}|^2 \rho^2\dx - \epsilon^2 \\
& \gtrsim \int_{\R} \sum_{k \in Z_n(\epsilon)} |\tilde{w}_{n,k}|^2 \rho^2\dx - \epsilon^2 \\
& \gtrsim \int_{\R} \int_{\S^1} \big| \sum_{k \in Z_n(\epsilon)} e^{ik\varphi} \tilde{w}_{n,k} \big|^2 \,\d\varphi \,\rho^2\dx - \epsilon^2 \\
& \gtrsim \big\|\sum_{k \in Z_n(\epsilon)} e^{ik\varphi} \tilde{w}_{n,k} \big\|_{L^2(\Sigma)}^2 - \epsilon^2 \\
& \gtrsim \|u(h)\|_{L^2(\Sigma)}^2 - \epsilon^2 \\
& \gtrsim 1 - \epsilon^2,
\end{align*}
which contradicts to our hypothesis that $ \|1_{\Sigma^+} u(h)\|_{L^2(\Sigma)} = o(1) $ as $ h \to 0 $.

Before proving Proposition~\ref{prop:observability-schrodinger}, we observe that if~$ \tilde{w} \in \tilde{A}_{n,k} $, then~$ \tilde{w} $ satisfies a one dimensional semiclassical stationary Schr\"{o}dinger equation,
\begin{equation}
\label{eq:equation-w-A(n,k)}
(-h^{2}\partial_{x}^{2}+V)\tilde{w} = E \tilde{w} + O(h^2)_{L^2\to L^2} \tilde{w},
\end{equation}
where the potential $ V = 1-\rho^{2} $ satisfies $0\le V<1=\lim_{|x|\to\infty}V(x)$, and
\begin{equation*}
V = cx^2 + O(x^3) \quad \mathrm{near\ } x=0,
\end{equation*}
recalling that $ c = -r''(\ell_0)/2 > 0 $; while the energy
\begin{equation*}
E = 1 - h^2 k^2,
\end{equation*}
satisfies by Corollary~\ref{cor:rotation-number-less-than-energy} and Proposition~\ref{prop:spectral-zoll} the estimate
\begin{equation*}
E = 1 - \lambda^{-2} k^2 + h^2 (\lambda^2 - h^{-2}) \lambda^{-2}k^2 \gtrsim -h^2.
\end{equation*}
To obtain~\eqref{eq:equation-w-A(n,k)}, we write $ \tilde{w} = \sum_{\lambda^2 \in I_n} w_{\lambda,k} $ with $ w_{\lambda,k} \in A_{\lambda,k} $, then by Proposition~\ref{prop:Geometry-Zoll-Surface-Revolution}, $ w_{\lambda,k} $ satisfies
\begin{equation}
\label{eq:equation-w(lambda,k)}
(-h^2 \partial_x^2 + V) w_{\lambda,k} 
= E w_{\lambda,k} + h^2 (\lambda^2 - h^{-2})\rho^2 w_{\lambda,k}
= E w_{\lambda,k} + O(h^2) \rho^2 w_{\lambda,k}.
\end{equation}
It remains to sum up $ w_{\lambda,k} $, and use the orthogonality by Corollary~\ref{cor:orthogonality-A(lamnbda,k)} to obtain the estimate for the remainder term (be careful that the constant~$ O(h^2) $ varies for different $ w_{\lambda,k} $, and cannot be moved to the front of the summation)
\begin{align*}
\big\| \sum_{\lambda^2 \in I_n} O(h^2) \rho^2 w_{\lambda,k} \big\|_{L^2}^2
& \le \|\rho\|_{L^\infty}^2 \big\| \sum_{\lambda^2 \in I_n} O(h^2) w_{\lambda,k} \big\|_{L^2(\rho^2\dx)}^2 
\lesssim \sum_{\lambda^2 \in I_n} \|O(h^2) w_{\lambda,k} \|^2_{L^2(\rho^2\dx)} \\
& \lesssim h^4 \sum_{\lambda^2 \in I_n} \|w_{\lambda,k} \|^2_{L^2(\rho^2\dx)}
\lesssim h^4 \big\| \sum_{\lambda^2 \in I_n} w_{\lambda,k} \big\|_{L^2(\rho^2\dx)}^2
\lesssim h^4 \|\tilde{w}\|_{L^2}^2
\end{align*}

\begin{proof}[Proof of Proposition~\ref{prop:observability-schrodinger}]
A first consequence of~\eqref{eq:equation-w-A(n,k)} is that, by an Lithner-Agmon type estimate, $ \tilde{w} $ decays exponentially at infinity, so that the weight~$ \rho^2 $ can be dropped (which will be done by Corollary~\ref{cor:Use-Lithner-Agmon-Lose-Weight}), and we are left to prove the observability,
\begin{equation*}
\|1_{x>0} \tilde{w}\|_{L^2} \gtrsim \|\tilde{w}\|_{L^2}.
\end{equation*}
Then we proceed with an argument by contradiction. Suppose that this observability is not true, then we can find an admissible sequence of $ (\epsilon,h,k,\tilde{w}) $ which violates the observability in the sense that
\begin{equation*}
\|1_{x>0} \tilde{w}\|_{L^2} / \|\tilde{w}\|_{L^2} = o(1).
\end{equation*}
Now that $ \tilde{w} $ satisfies~\eqref{eq:equation-w-A(n,k)}, and as we have seen, since $ k \in Z_n(\epsilon) $, the energy~$ E $ satisfies
\begin{equation*}
-h^2 \lesssim E \le \epsilon = o(1),
\end{equation*}
we may assume that, up to a subsequence, either $ E = O(h) $, or $ E \gg h $. We will show that, by Proposition~\ref{prop:E=O(h)} and Proposition~\ref{prop:E-gg-h}, neither of these two cases is possible. This contradiction then finishes the proof.
\end{proof}

\subsubsection{Some Lithner-Agmon Type Estimates}
\label{sec:Lithner-Agmon-Estimates}

In this section we prove some estimates of Lither-Agmon Type, originally due to Lither~\cite{Lithner} and Agmon~\cite{Agmon}. The argument we used here comes from~\cite{D-S,Helffer}.
Let 
\begin{equation*}
P(\tau)=-h(\tau)^{2}\partial_x^2 + V(x;\tau)
\end{equation*}
be a Schr\"{o}dinger operator on~$\R$, where the parameter~$h(\tau)$ and the potential~$V(\cdot;\tau) \in C(\R) \cap L^\infty(\R) $ both depend on $\tau\to 0$. We will consider the following two cases:
\begin{enumerate}[nosep]
\item $h(\tau) \equiv 1$ does not depend on $\tau$, then we get a classical Schr\"{o}dinger operator;
\item $h(\tau) \equiv \tau \to 0$, and we get a semiclassical Schr\"{o}dinger operator.
\end{enumerate}  
We will estimate the solution $ u $ to the equation
\begin{equation}
\label{eq:equation-Schrodinger-Lithner-Agmon}
P(\tau)u=E(\tau)u+f(\tau),
\end{equation}
where $ E(\tau) \in \R $, $ f \in C(\R) \cap L^2(\R) $. To do this, we define the Lithner-Agmon distance, for $ x_1, x_2 \in \R $,
\begin{equation*}
d(x_{1},x_{2};\tau)=\Big|\int_{x_{1}}^{x_{2}}\left(V(x;\tau)-E(\tau)\right)_{+}^{1/2}\dx\Big|.
\end{equation*}
For $\varepsilon > 0$, $ R > 0 $, let 
\begin{equation*}
\Phi^{\varepsilon}(x;\tau)=(1-\varepsilon)d(x,0;\tau),
\quad
\Phi_{R}^{\varepsilon}(x;\tau)=\chi_{R}(\Phi^{\varepsilon}(x;\tau)),
\end{equation*}
where $\chi_{R}(t)=1_{t\le R}(t)t+1_{t>R}(t)R$. 

We make the following assumption.

\paragraph{\textbf{Assumption}}
For all $\varepsilon>0$, there exist $\tau_{\varepsilon}>0$, $\delta_{\varepsilon}>0$, $R_{\varepsilon}>0$, $C_{\varepsilon}>0$, such that for $0<\tau<\tau_{\varepsilon}$, if $|x|\ge R_{\varepsilon}$, then $V(x;\tau)\ge E(\tau)+\delta_{\varepsilon}$; if $|x|\le R_{\varepsilon}$, then $|V(x;\tau)-E(\tau)|<C_{\varepsilon}$, and $\Phi^{\varepsilon}(x;\tau)\le\varepsilon$.

This assumption implies that $\Phi^{\varepsilon}(x;\tau)\to\infty$ as $|x|\to\infty$, uniformly for~$\tau$ sufficiently small. Therefore $\Phi_{R}^{\varepsilon}$ is constant, equaling to~$R$, for~$|x|$ sufficiently large.

We will drop the parameter~$ \tau $ for simplicity. The following proposition comes from~\cite{D-S}.

\begin{prop}
Let $u\in C_{c}^{2}(\R)$ and let $\Phi\in\Lip_{\loc}(\R)$ be real valued, then the following identity holds.
\begin{equation}
\label{eq:Lithner-Agmon-Identity}
h^{2}\int_{\R}|(e^{\Phi/h}u)'|^{2}\dx  
+\int_{\R}(V-|\Phi'|^{2})e^{2\Phi/h}|u|^{2}\dx
=\Re\int_{\R}e^{2\Phi/h}Pu\bar{u}\,\dx.
\end{equation}
\end{prop}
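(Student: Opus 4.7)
The plan is to reduce the identity to a single integration by parts on the term $-h^2 \int e^{2\Phi/h} u'' \bar{u}\,\dx$, and to recognize the resulting expression as the square of the derivative of the conjugated function $v = e^{\Phi/h} u$. The proof is purely computational, so the difficulty lies only in bookkeeping and in justifying the integration by parts when $\Phi$ is merely locally Lipschitz.

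First, I would compute $v'$ formally where $v = e^{\Phi/h} u$. One has $v' = e^{\Phi/h}(u' + (\Phi'/h) u)$ (with $\Phi'$ in the weak sense, defined a.e.\ since $\Phi \in \Lip_\loc$). Squaring gives
\begin{equation*}
h^2 |v'|^2 = e^{2\Phi/h}\bigl( h^2 |u'|^2 + 2 h \Phi' \Re(u' \bar u) + (\Phi')^2 |u|^2 \bigr).
\end{equation*}

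Next, I would expand the right-hand side $\Re \int e^{2\Phi/h} Pu\, \bar{u}\,\dx$ using $Pu = -h^2 u'' + V u$. The potential part contributes directly the term $\int V e^{2\Phi/h} |u|^2\,\dx$. For the kinetic part, since $u \in C_c^2(\R)$, the function $e^{2\Phi/h}\bar u$ belongs to a compactly supported Sobolev space $W^{1,\infty}_c$, so integration by parts is legitimate and no boundary terms arise:
\begin{equation*}
-h^2 \int_\R e^{2\Phi/h} u'' \bar u\,\dx = h^2 \int_\R (e^{2\Phi/h}\bar u)' u'\,\dx = h^2 \int_\R e^{2\Phi/h}|u'|^2\,\dx + 2h \int_\R \Phi' e^{2\Phi/h} \bar u u'\,\dx.
\end{equation*}
Taking real parts eliminates the imaginary component of the cross term, yielding
\begin{equation*}
-h^2 \Re \int_\R e^{2\Phi/h} u'' \bar u\,\dx = h^2 \int_\R e^{2\Phi/h}|u'|^2\,\dx + 2h \int_\R \Phi' e^{2\Phi/h} \Re(\bar u u')\,\dx.
\end{equation*}

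Finally, adding and subtracting $\int e^{2\Phi/h}(\Phi')^2 |u|^2\,\dx$ on the right lets me recombine the first two terms into $h^2 \int |(e^{\Phi/h} u)'|^2\,\dx$ using the expansion of $|v'|^2$ above, leaving over the discrepancy $-\int e^{2\Phi/h}(\Phi')^2|u|^2\,\dx$. Combining this with the potential piece produces exactly
\begin{equation*}
h^2 \int_\R |(e^{\Phi/h} u)'|^2\,\dx + \int_\R (V - |\Phi'|^2) e^{2\Phi/h}|u|^2\,\dx,
\end{equation*}
which establishes~\eqref{eq:Lithner-Agmon-Identity}. The only real subtlety, which I would mention but not belabor, is the a.e.\ sense of $\Phi'$ and the standard Rademacher-type justification that the integration by parts and the product rule for $(e^{\Phi/h}u)'$ remain valid for a locally Lipschitz weight multiplied by a compactly supported $C^2$ function; everything else is algebra.
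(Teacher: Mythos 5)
Your proof is correct and is exactly the standard argument: the paper gives no proof of this identity, simply citing Dimassi--Sj\"ostrand, and the computation there is the same single integration by parts against $e^{2\Phi/h}\bar u$ followed by recombining the cross term into $h^2|(e^{\Phi/h}u)'|^2$. Your handling of the Lipschitz weight (Rademacher, $e^{2\Phi/h}\bar u\in W^{1,\infty}$ with compact support, no boundary terms) is the right level of care and the algebra checks out.
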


Suppose now that the phase~$\Phi$ is constant for~$|x|$ large, and suppose $ u \in C^2(\R) \cap D(P) $ with $ D(P)=\big\{ w\in L^{2}(\R):Vw\in L^{2}(\R), w''\in L^{2}(\R)\big\} $. Set $u_{R}(x)=\chi(x/R)u(x)$, with $\chi\in C_{c}^{\infty}(\R)$. Therefore $u_{R}\in C_{c}^{2}(\R)$, and the previous proposition applies.
\begin{equation*}
h^{2}\int_{\R}|(e^{\Phi/h}u_{R})'|^{2}\dx  +\int_{\R}(V-|\Phi'|^{2})e^{2\Phi/h}|u_{R}|^{2}\dx
=\Re\int_{\R}e^{2\Phi/h}Pu_{R}\overline{u_{R}}\dx.
\end{equation*}
By the Lebesgue dominated convergence theorem, $u_{R}\to u$ and $Vu_{R}\to Vu$ both strongly in~$L^{2}(\R)$ as $ R\to \infty $. Now that $ u \in D(P) $, $Pu_{R}\to Pu$ in~$L^{2}(\R)$. Now that~$ \Phi $ being constant for large~$ |x| $, we can pass to the limit on each side of the identity above, and prove the following corollary.
\begin{cor}
If $u\in C^2(\R) \cap D(P)$, and~$\Phi$ is constant for large~$|x|$, then the identity~\eqref{eq:Lithner-Agmon-Identity} holds.
\end{cor}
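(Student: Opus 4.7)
The plan is to approximate $u$ by compactly supported functions and invoke the previous proposition, then pass to the limit. This is a straightforward density argument, but some care is needed to justify each convergence.

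I would take $\chi \in C_c^\infty(\R)$ with $\chi \equiv 1$ on $[-1,1]$ and $0 \le \chi \le 1$, and set $u_R(x) = \chi(x/R) u(x)$. Since $u \in C^2(\R)$ and $\chi \in C_c^\infty(\R)$, we have $u_R \in C_c^2(\R)$, so the preceding proposition gives the identity \eqref{eq:Lithner-Agmon-Identity} with $u$ replaced by $u_R$. The goal is to show that each of the three terms in this identity converges, as $R \to \infty$, to the corresponding term for $u$.

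The key convergences are: $u_R \to u$ in $L^2(\R)$ by dominated convergence (since $|u_R| \le |u|$ and $u \in L^2$); $V u_R \to V u$ in $L^2(\R)$ similarly, using $V u \in L^2(\R)$; and $u_R'' = R^{-2} \chi''(x/R) u + 2 R^{-1} \chi'(x/R) u' + \chi(x/R) u''$. The first and second pieces go to zero in $L^2$ by the factors $R^{-2}$ and $R^{-1}$ (noting that $u' \in L^2(\R)$ follows from $u, u'' \in L^2(\R)$ by interpolation), while the third converges to $u''$ by dominated convergence. Hence $P u_R \to P u$ in $L^2(\R)$. Because $\Phi$ is constant for $|x|$ large, $e^{\Phi/h}$ is bounded on $\R$, and $\Phi'$ is bounded with support in a compact set. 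Therefore $e^{2\Phi/h} P u_R \overline{u_R} \to e^{2\Phi/h} P u \bar{u}$ in $L^1(\R)$ by combining the $L^2$ convergences above with the Cauchy--Schwarz inequality, and $(V - |\Phi'|^2) e^{2\Phi/h} |u_R|^2 \to (V - |\Phi'|^2) e^{2\Phi/h} |u|^2$ in $L^1(\R)$ using $V u, u \in L^2(\R)$.

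The one step that needs slightly more attention is the gradient term. Expand
\begin{equation*}
(e^{\Phi/h} u_R)' = \tfrac{\Phi'}{h} e^{\Phi/h} u_R + e^{\Phi/h} u_R'.
\end{equation*}
The first summand converges to $(\Phi'/h) e^{\Phi/h} u$ in $L^2(\R)$ since $\Phi'/h \cdot e^{\Phi/h}$ is bounded and compactly supported. For the second, writing $u_R' = R^{-1} \chi'(x/R) u + \chi(x/R) u'$, the first piece goes to zero in $L^2$ and the second converges to $u'$ in $L^2$ by dominated convergence. Multiplying by the bounded factor $e^{\Phi/h}$ preserves the $L^2$ convergence, so $(e^{\Phi/h} u_R)' \to (e^{\Phi/h} u)'$ in $L^2(\R)$, and the squared norm converges as required. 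Assembling these three convergences yields the identity \eqref{eq:Lithner-Agmon-Identity} for $u$. The only subtle point, and therefore the main (mild) obstacle, is ensuring that $u' \in L^2(\R)$ so that the $R^{-1} \chi'(x/R) u'$ cross term is controlled; this follows from the standard interpolation $\|u'\|_{L^2}^2 \le \|u\|_{L^2} \|u''\|_{L^2}$ valid for $u, u'' \in L^2(\R)$.
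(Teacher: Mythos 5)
Your proof is correct and takes essentially the same route as the paper: truncate via $u_R=\chi(x/R)u$, apply the identity for $C_c^2$ functions, and pass to the limit by dominated convergence using $u, Vu, u''\in L^2(\R)$ and the boundedness of $e^{\Phi/h}$ and $\Phi'$. Your additional verification that $u'\in L^2(\R)$ (via $\|u'\|_{L^2}^2\le\|u\|_{L^2}\|u''\|_{L^2}$), needed to kill the cross term $R^{-1}\chi'(x/R)u'$, is a detail the paper leaves implicit in its assertion that $Pu_R\to Pu$.
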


Now let~$ u \in C(\R) \cap L^2(\R) $ be a solution to~\eqref{eq:equation-Schrodinger-Lithner-Agmon}, then $ u \in C^2(\R) \cap D(P) $, and the corollary applies,
\begin{equation}
\label{eq:Lithner-Agmon-Identity-epsilon-R-for-Eigenfunction}
\begin{split}
h^{2}\int_{\R}|(e^{\Phi_{R}^{\varepsilon}/h}u)'|^{2}\dx & +\int_{\R}(V-E-|(\Phi_{R}^{\varepsilon})'|^{2}) e^{2\Phi_{R}^{\varepsilon}/h}|u|^{2}\dx  \\
& = \Re\int_{\R}e^{2\Phi_{R}^{\varepsilon}/h}f\bar{u}\,\dx 
\le A_\varepsilon \|e^{\Phi_{R}^{\varepsilon}/h}u\|_{L^2}^{2}+C_{\varepsilon}\|e^{\Phi_{R}^{\varepsilon}/h}f\|_{L^2}^{2}.
\end{split}
\end{equation}
where $ A_\varepsilon = (1-(1-\varepsilon)^{2})\delta_{\varepsilon} $.
For $0<\tau<\tau_{\varepsilon}$ and $|x|\ge R_{\varepsilon}$, by the definition of~$ \Phi_{R}^{\varepsilon} $,
\begin{equation*}
V(x)-E-|(\Phi_{R}^{\varepsilon})'|^{2} \ge(1-(1-\varepsilon)^{2})(V(x)-E)
\ge(1-(1-\varepsilon)^{2})\delta_{\varepsilon} = A_\varepsilon.
\end{equation*}
Separating domain the integrals in~\eqref{eq:Lithner-Agmon-Identity-epsilon-R-for-Eigenfunction} into two parts, $|x|\ge R_{\varepsilon}$ and $|x|<R_{\varepsilon}$, we get
\begin{align*}
h^{2}\int_{\R}|(e^{\Phi_{R}^{\varepsilon}/h}u)'|^{2}\dx 
& +A_\varepsilon \int_{|x|\ge R_{\varepsilon}}e^{2\Phi_{R}^{\varepsilon}/h}|u|^{2}\dx
-C_{\varepsilon}\|e^{\Phi_{R}^{\varepsilon}/h}f\|_{L^2}^{2}\\
& \le \big(\|V(x)-E+(\Phi_{R}^{\varepsilon})'\|_{L^{\infty}(|x|\le R_{\varepsilon})}+A_\varepsilon\big)
 \int_{|x|\le R_{\varepsilon}}e^{2\Phi_{R}^{\varepsilon}/h}\left|u\right|^{2}\dx\\
& \le C_{\varepsilon}\int_{|x|\le R_{\varepsilon}}e^{2\Phi_{R}^{\varepsilon}/h}\left|u\right|^{2}\dx.
\end{align*}
Adding $A_\varepsilon \int_{|x|\le R_{\varepsilon}} e^{2\Phi_{R}^{\varepsilon}/h}|u|^{2}\dx$ to each side of the inequality, we get
\begin{align*}
h^{2}\int_{\R}|(e^{\Phi_{R}^{\varepsilon}/h}u)'|^{2}\dx&+ A_\varepsilon \int_{\R}e^{2\Phi_{R}^{\varepsilon}/h}|u|^{2}\dx  -C_{\varepsilon}\|e^{\Phi_{R}^{\varepsilon}/h}f\|_{L^2}^{2}\\
& \le C_\varepsilon\int_{|x|\le R_{\varepsilon}} e^{2\Phi_{R}^{\varepsilon}/h}|u|^{2}\dx
\le C_\varepsilon\sup_{|x|\le R_{\varepsilon}} (e^{2\Phi_{R}^{\varepsilon}/h})\|u\|_{L^{2}}^{2}
\le C_\varepsilon e^{2\varepsilon/h}\|u\|_{L^{2}}^{2}.
\end{align*}
This proves the following proposition.

\begin{prop}[Inhomogeneous Lithner-Agmon Estimate]
\label{prop:Lithner-Agmon-Estimate-Inhomogeneous}
Under the assumptions above, for each $\varepsilon>0$, there exists $\tau_{\varepsilon}>0$ and $C_\varepsilon>0$, such that for $0<\tau<\tau_{\varepsilon}$, and $R>0$, the following estimate holds
\begin{equation*}
\| h(e^{\Phi_{R}^{\varepsilon}/h}u)'\|_{L^{2}}^{2}
+\| e^{\Phi_{R}^{\varepsilon}/h}u\|_{L^{2}}^{2}
\le C_\varepsilon\big(e^{2\varepsilon/h} \|u\|_{L^2}^2 + \|e^{\Phi_{R}^{\varepsilon}/h}f\|_{L^2}^{2}\big).
\end{equation*}
\end{prop}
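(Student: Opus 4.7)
The plan is to derive the estimate from the fundamental weighted energy identity stated in the excerpt (the Lithner-Agmon identity for $u \in C_c^2(\R)$), specialised to the Agmon phase $\Phi_R^\varepsilon$ and to the equation $Pu = Eu + f$. First, I would test $Pu - Eu = f$ against $e^{2\Phi_R^\varepsilon/h}\bar u$, integrate by parts once in $x$, and take real parts to obtain
\begin{equation*}
h^2 \|(e^{\Phi_R^\varepsilon/h} u)'\|_{L^2}^2 + \int_{\R}(V - E - |(\Phi_R^\varepsilon)'|^2)\, e^{2\Phi_R^\varepsilon/h} |u|^2 \, \dx = \Re \int_{\R} e^{2\Phi_R^\varepsilon/h} f \bar u \, \dx.
\end{equation*}
The identity is established in the excerpt for compactly supported $u$ and extended by truncation to $u \in C^2(\R) \cap D(P)$ whenever the phase is constant outside a bounded set. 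Since the Assumption forces $\Phi^\varepsilon(x;\tau) \to \infty$ as $|x| \to \infty$ uniformly for small $\tau$, the cap $\chi_R$ guarantees that $\Phi_R^\varepsilon$ is eventually equal to $R$, so the identity applies to our solution.

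Next, I would split the spatial integral at $|x| = R_\varepsilon$. On the classically forbidden region $|x| \ge R_\varepsilon$, the construction of the Agmon phase yields $|(\Phi_R^\varepsilon)'| \le (1-\varepsilon)(V - E)_+^{1/2}$, and combining this with the Assumption $V - E \ge \delta_\varepsilon$ gives the coercive lower bound
\begin{equation*}
V - E - |(\Phi_R^\varepsilon)'|^2 \ge \big(1 - (1-\varepsilon)^2\big)(V - E) \ge A_\varepsilon := (1-(1-\varepsilon)^2)\delta_\varepsilon > 0.
\end{equation*}
On the allowed region $|x| < R_\varepsilon$, the quadratic form $V - E - |(\Phi_R^\varepsilon)'|^2$ need not be positive, but the bounds $|V - E| \le C_\varepsilon$ and $\Phi^\varepsilon \le \varepsilon$ from the Assumption imply $e^{2\Phi_R^\varepsilon/h} \le e^{2\varepsilon/h}$ there, so this bulk contribution is controlled by $C_\varepsilon e^{2\varepsilon/h}\|u\|_{L^2}^2$. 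To the right-hand side I would apply Young's inequality to split $\Re \int e^{2\Phi_R^\varepsilon/h} f \bar u$ into a piece absorbed by the coercive term (with constant $\tfrac{A_\varepsilon}{2}$) and a piece bounded by $C_\varepsilon \|e^{\Phi_R^\varepsilon/h} f\|_{L^2}^2$.

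Adding $A_\varepsilon \int_{|x| \le R_\varepsilon} e^{2\Phi_R^\varepsilon/h} |u|^2 \,\dx$ to both sides reconstitutes $A_\varepsilon \|e^{\Phi_R^\varepsilon/h} u\|_{L^2}^2$ on the left, and after absorbing the Young term the announced estimate follows. The main subtlety, in my view, is the passage from the compactly supported case of the identity to $L^2$ solutions of the Schrödinger equation: this is precisely why $\Phi^\varepsilon$ has been truncated at height $R$, since the boundedness of $e^{\Phi_R^\varepsilon/h}$ is what makes the approximation argument converge. Another point requiring care is that the constants $A_\varepsilon, C_\varepsilon, R_\varepsilon$ must be kept uniform in $R$ and in the parameter $\tau$ (for $\tau < \tau_\varepsilon$); this uniformity is built into the Assumption and is what ultimately allows one to let $R \to \infty$ a posteriori to recover a pointwise exponential weight $e^{\Phi^\varepsilon/h}$ by Fatou's lemma.
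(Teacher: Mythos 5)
Your proposal is correct and follows essentially the same route as the paper: the weighted identity for compactly supported functions, extension to $D(P)$ by truncation (possible because $\Phi_R^\varepsilon$ is capped at $R$), the split at $|x|=R_\varepsilon$ with coercivity constant $A_\varepsilon=(1-(1-\varepsilon)^2)\delta_\varepsilon$ on the forbidden region, the bound $e^{2\Phi_R^\varepsilon/h}\le e^{2\varepsilon/h}$ on the allowed region, absorption of the source term by Young's inequality, and the final addition of $A_\varepsilon\int_{|x|\le R_\varepsilon}e^{2\Phi_R^\varepsilon/h}|u|^2\,\dx$ to both sides. Your use of $A_\varepsilon/2$ in the Young step is in fact a slightly more careful bookkeeping than the paper's, which nominally uses the full constant $A_\varepsilon$.
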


The following two corollaries are important.
\begin{cor}[Homogeneous Lithner-Agmon Estimate]
\label{prop:Lithner-Agmon-Estimate-Homogeneous}
If $f=0$, then we obtain the usual (homogeneous) Lithner-Agmon estimate,
\begin{equation*}
\| h(e^{\Phi_{R}^{\varepsilon}/h}u)'\|_{L^{2}}^{2}
+\| e^{\Phi_{R}^{\varepsilon}/h}u\|_{L^{2}}^{2}
\le C_\varepsilon e^{2\varepsilon/h}\|u\|_{L^2}^2.
\end{equation*}
Observe that, the right hand side of this estimate does not depend on~$R$, we are thus allowed to let $R\to\infty$, and get a finer estimate,
\begin{equation*}
\| h(e^{\Phi^{\varepsilon}/h}u)'\|_{L^{2}}^{2}
+\| e^{\Phi^{\varepsilon}/h}u\|_{L^{2}}^{2}
\le C_\varepsilon e^{2\varepsilon/h}\|u\|_{L^2}^2.
\end{equation*}
\end{cor}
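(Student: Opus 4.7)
The plan is to deduce both estimates directly from the inhomogeneous Lithner-Agmon estimate (Proposition~\ref{prop:Lithner-Agmon-Estimate-Inhomogeneous}) by setting $f=0$, and then to upgrade the $R$-dependent version to the $R=\infty$ version by a monotone limit argument. The proof naturally splits into a trivial part and a limiting part, and the content lies entirely in justifying the passage $R \to \infty$.

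The first estimate is obtained by plugging $f = 0$ into Proposition~\ref{prop:Lithner-Agmon-Estimate-Inhomogeneous}: the term $\|e^{\Phi_R^\varepsilon/h} f\|_{L^2}^2$ drops out, and we keep the constant $C_\varepsilon$ and threshold $\tau_\varepsilon$ of that proposition. For the second estimate, I would exploit that $\chi_R(t) \nearrow t$ on $[0,\infty[{}$ as $R \to \infty$, so that $\Phi_R^\varepsilon(x) \nearrow \Phi^\varepsilon(x)$ pointwise for every $x \in \R$. By the monotone convergence theorem,
\begin{equation*}
\|e^{\Phi^\varepsilon/h} u\|_{L^2}^2 = \lim_{R \to \infty} \|e^{\Phi_R^\varepsilon/h} u\|_{L^2}^2 \le C_\varepsilon e^{2\varepsilon/h} \|u\|_{L^2}^2,
\end{equation*}
which already handles the second summand on the left-hand side.

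The genuine work is in the derivative term, which is where I expect the main obstacle. The first estimate yields that the family $\{e^{\Phi_R^\varepsilon/h} u\}_{R > 0}$ is bounded in $H^1(\R)$ uniformly in $R$ (the $L^2$ bound coming from the inhomogeneous estimate, the derivative bound from the first part of the corollary just proven). Combined with the strong $L^2$ convergence $e^{\Phi_R^\varepsilon/h} u \to e^{\Phi^\varepsilon/h} u$ established above, this bounded family admits a weak $H^1$ limit, which must coincide with $e^{\Phi^\varepsilon/h} u$ by uniqueness of limits in $\mathcal{D}'(\R)$. Weak lower semicontinuity of the $L^2$ norm then yields
\begin{equation*}
\|h (e^{\Phi^\varepsilon/h} u)'\|_{L^2}^2 \le \liminf_{R \to \infty} \|h(e^{\Phi_R^\varepsilon/h} u)'\|_{L^2}^2 \le C_\varepsilon e^{2\varepsilon/h} \|u\|_{L^2}^2,
\end{equation*}
and adding this to the previous bound gives the claimed $R$-independent estimate. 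A minor subtlety to verify is that $e^{\Phi^\varepsilon/h} u \in L^2(\R)$ makes sense in the first place, but this is precisely what the monotone convergence step delivers, since the right-hand side is finite.
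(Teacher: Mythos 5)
Your proposal is correct and follows the same route as the paper, which simply sets $f=0$ in Proposition~\ref{prop:Lithner-Agmon-Estimate-Inhomogeneous} and lets $R\to\infty$ using the $R$-independence of the right-hand side; you merely make the limit passage explicit via monotone convergence for the $L^2$ term and weak $H^1$ compactness plus lower semicontinuity for the derivative term (one could equally use Fatou's lemma pointwise, since $\Phi_R^\varepsilon\nearrow\Phi^\varepsilon$ and $(\Phi_R^\varepsilon)'\to(\Phi^\varepsilon)'$ a.e.). No gaps.
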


\begin{cor}
\label{Lithner-Agmon-Estimate-Inhomogeneous-Decay-in-Classical-Forbidden-Region}
Let $\chi\in L^\infty(\R)$ be supported in the interior of $\{x\in\R:\Phi^{\varepsilon}_{R}(x)=R\}$, such that $0\le \chi\le 1$, then
\begin{equation}
\|\chi h u'\|_{L^2}^{2}+\|\chi u\|_{L^2}^{2}\le C_\varepsilon\big(e^{-2(R-\varepsilon)/h}\|u\|_{L^2}^{2}+\|f\|_{L^2}^{2}\big).
\end{equation}
\end{cor}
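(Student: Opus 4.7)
The plan is to deduce this corollary directly from the inhomogeneous Lithner-Agmon estimate of Proposition~\ref{prop:Lithner-Agmon-Estimate-Inhomogeneous}, by exploiting the fact that on the support of~$\chi$ the weight $\Phi^\varepsilon_R$ takes the constant value~$R$, so that $e^{\Phi^\varepsilon_R/h}$ becomes a genuine multiplicative constant equal to~$e^{R/h}$ and its derivative vanishes.

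The key observation is that since $\chi_R(t) = R$ for all $t \ge R$, the function $\Phi^\varepsilon_R$ is identically~$R$ on $\{x : \Phi^\varepsilon(x) \ge R\}$. By hypothesis, $\supp \chi$ lies in the interior of this set, so $\Phi^\varepsilon_R$ is locally constant there and $(\Phi^\varepsilon_R)' = 0$ almost everywhere on $\supp \chi$. Consequently we have the pointwise identities
\[
\chi\, e^{\Phi^\varepsilon_R/h} u = e^{R/h} \chi u, \qquad \chi\, h\bigl(e^{\Phi^\varepsilon_R/h} u\bigr)' = e^{R/h}\, \chi h u',
\]
and the bound $0 \le \chi \le 1$ yields
\[
e^{2R/h}\bigl(|\chi h u'|^2 + |\chi u|^2\bigr) \le \bigl|h(e^{\Phi^\varepsilon_R/h} u)'\bigr|^2 + \bigl|e^{\Phi^\varepsilon_R/h} u\bigr|^2.
\]

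Integrating this over $\R$ and applying Proposition~\ref{prop:Lithner-Agmon-Estimate-Inhomogeneous} to the right-hand side gives
\[
e^{2R/h}\bigl(\|\chi h u'\|_{L^2}^2 + \|\chi u\|_{L^2}^2\bigr) \le C_\varepsilon\bigl(e^{2\varepsilon/h}\|u\|_{L^2}^2 + \|e^{\Phi^\varepsilon_R/h} f\|_{L^2}^2\bigr).
\]
Dividing by $e^{2R/h}$ and using the trivial pointwise bound $\Phi^\varepsilon_R \le R$, which yields $e^{-2R/h}\|e^{\Phi^\varepsilon_R/h} f\|_{L^2}^2 \le \|f\|_{L^2}^2$, produces the claimed estimate with the prefactor $e^{-2(R-\varepsilon)/h}$ in front of $\|u\|_{L^2}^2$.

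I do not anticipate any serious obstacle, since all the analytic content is already contained in Proposition~\ref{prop:Lithner-Agmon-Estimate-Inhomogeneous}; the corollary is essentially a bookkeeping statement that rewrites the inhomogeneous estimate in the region where the Lithner-Agmon weight is saturated. The only minor point of care is justifying that $(\Phi^\varepsilon_R)' = 0$ on the open set $\{\Phi^\varepsilon > R\}$, which is immediate from the fact that $\Phi^\varepsilon_R$ is locally constant there.
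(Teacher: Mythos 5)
Your proof is correct and follows essentially the same route as the paper's: the paper's own argument consists precisely of noting the identities $\chi\,(e^{\Phi^{\varepsilon}_{R}/h}u)'=e^{R/h}\chi u'$, $\chi\, e^{\Phi^{\varepsilon}_{R}/h}u=e^{R/h}\chi u$ and the bound $\|e^{\Phi^{\varepsilon}_{R}/h}f\|_{L^2}\le e^{R/h}\|f\|_{L^2}$, then applying Proposition~\ref{prop:Lithner-Agmon-Estimate-Inhomogeneous} and dividing by $e^{2R/h}$. You have simply written out the bookkeeping that the paper leaves implicit, including the correct justification that $(\Phi^{\varepsilon}_{R})'=0$ on the open set where the weight is saturated.
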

\begin{rem}
For any $\delta>0$, we could modify the phase function $\Phi^{\varepsilon}_{R}$ to some $\tilde{\Phi}_{R}$, so that $\tilde{\Phi}_{R}\equiv R$ for $|x|\ge R_{\varepsilon}+\delta$, while $\tilde{\Phi}_{R}=\Phi_{R}^{\varepsilon}$ for $|x|\le R_{\varepsilon}$.
\end{rem}
\begin{rem}
This is a classical estimate by reversing the operator $-h^{2}\partial_{x}^{2}+V(x)$ in the classical forbidden region, when $V$ is independent of $\tau\equiv h$. For our application in Section~\ref{sub:E-gg-h}, where the potential is~$V_{E}$, it is believed that such a semiclassical analysis suffices. However, we decide to use the approach above for simplicity to avoid technique problems caused by the behavior of $V_{E}$ at faraway from the origin.
\end{rem}
\begin{proof}
Simply notice that
\begin{equation*}
\chi(e^{\Phi^{\varepsilon}_{R}/h}u)'=e^{R/h}\chi u',
\quad \chi e^{\Phi^{\varepsilon}_{R}/h}u=e^{R/h}\chi u,
\quad \|e^{\Phi^{\varepsilon}_{R}/h}f\|_{L^2}\le e^{R/h}\|f\|_{L^2}.
\end{equation*}
The rest of the proof is a straightforward application of the previous proposition.
\end{proof}

We want to apply the discussion above to an admissible 4-tuple $ (\epsilon,h,k,\tilde{w}) $ for sufficiently small~$ \epsilon $ and~$ h $. So that $\tau = h$, $P=-h^{2}\partial_{x}^{2}+V$, and $E=o(1)$, and $f = O(h^2)_{L^2\to L^2} \tilde{w}$. We are left to verify that $\tilde{w}\in D(P)$. This requires the following proposition from~\cite{Olver}.
\begin{prop}
Let $ I=(a_{-},a_{+}) \subset \R $ be a finite or infinite interval, let $f\in C^{2}(\bar{I})$ be real valued and positive, and let $g \in C(\bar{I})$ be a continuous and complex valued. Let
\begin{equation*}
F(x)=\int\big\{ f^{-1/4}(f^{-1/4})''-gf^{-1/2}\big\}\,\dx
\end{equation*}
be a primitive function of the integrand. Then in $I$ the differential
equation
\begin{equation*}
u''=(f+g)u
\end{equation*}
has twice continuously differentiable solutions of the form
\begin{equation*}
u_{\pm}(x)=f^{-1/4}(x)\exp\Big\{\pm\int f^{1/2}(x)\dx\Big\}\left(1+\varepsilon_{\pm}(x)\right)
\end{equation*}
with estimates
\begin{equation*}
\max\big\{ \left|\varepsilon_{\pm}(x)\right|,
\frac{1}{2}f^{-1/2}(x)\left|\varepsilon'_{\pm}(x)\right|\big\}
\le\exp\big\{\frac{1}{2}\mathcal{V}_{a_{\pm},x}(F)\big\}-1
\end{equation*}
provided the total variation $\mathcal{V}_{a_{\pm},x}(F)$ of~$F$ on the interval $(a_{\pm},x)$ being finite. If $g$ is real, then the solutions are real.
\end{prop}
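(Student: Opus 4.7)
The plan is to follow the classical Liouville--Green scheme, reducing the equation to a perturbed exponential form and then solving a Volterra integral equation by successive approximations whose size is controlled by the total variation of $F$.

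\textbf{Step 1: Liouville transformation.} I would first introduce the new independent variable $\xi=\int f^{1/2}\,\dx$ (well defined because $f>0$) and the new dependent variable $W=f^{1/4}u$. A direct differentiation shows that $u''=(f+g)u$ transforms to
\begin{equation*}
\frac{\d^{2}W}{\d\xi^{2}}=\bigl(1+\phi(\xi)\bigr)W,
\qquad
\phi(\xi(x))=f^{-1/4}(f^{-1/4})''\,f^{-1}-g f^{-1}.
\end{equation*}
The crucial observation is that, by the chain rule, $\phi\,\d\xi=\bigl(f^{-1/4}(f^{-1/4})''-gf^{-1/2}\bigr)\dx=\d F$. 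Hence the variation of the perturbation $\phi$ in the new variable $\xi$ is exactly the total variation $\mathcal{V}(F)$ of $F$ in $x$, which is what will drive all subsequent bounds.

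\textbf{Step 2: Volterra equation for the error.} Seeking $W_{\pm}=e^{\pm\xi}(1+\varepsilon_{\pm})$ and substituting into the transformed equation, a short calculation shows that $\varepsilon_{\pm}$ must satisfy the homogeneous ODE $\varepsilon_{\pm}''\pm 2\varepsilon_{\pm}'=\phi(1+\varepsilon_{\pm})$. Using variation of parameters with the fundamental pair $\{1,e^{\mp 2\xi}\}$ and imposing the boundary behaviour $\varepsilon_{\pm}\to 0$ as $\xi\to \xi(a_{\pm})$, one obtains the Volterra integral equation
\begin{equation*}
\varepsilon_{\pm}(\xi)=\int_{\xi(a_{\pm})}^{\xi} K_{\pm}(\xi,\eta)\,\phi(\eta)\bigl(1+\varepsilon_{\pm}(\eta)\bigr)\,\d\eta,
\qquad
K_{\pm}(\xi,\eta)=\tfrac{1}{2}\bigl(1-e^{\mp 2(\xi-\eta)}\bigr).
\end{equation*}
The signs are arranged so that on the relevant half-line $|K_{\pm}(\xi,\eta)|\le \tfrac{1}{2}$.

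\textbf{Step 3: Successive approximations.} I would set $\varepsilon_{\pm}^{(0)}=0$ and iterate
$\varepsilon_{\pm}^{(n+1)}(\xi)=\int K_{\pm}(\xi,\eta)\phi(\eta)\bigl(1+\varepsilon_{\pm}^{(n)}(\eta)\bigr)\d\eta$. Writing $h_{n}=\varepsilon_{\pm}^{(n)}-\varepsilon_{\pm}^{(n-1)}$ and using the bound $|K_{\pm}|\le\tfrac{1}{2}$ together with $|\phi|\,\d\eta\le\d|F|$ (the absolute variation measure of $F$), an induction gives
\begin{equation*}
|h_{n}(\xi)|\le\frac{1}{n!}\Bigl(\tfrac{1}{2}\mathcal{V}_{a_{\pm},x}(F)\Bigr)^{n}.
\end{equation*}
Summing the telescoping series yields convergence and the stated bound
\begin{equation*}
|\varepsilon_{\pm}(x)|\le\exp\Bigl(\tfrac{1}{2}\mathcal{V}_{a_{\pm},x}(F)\Bigr)-1.
\end{equation*}
Differentiating the integral equation in $\xi$ produces the analogous bound for $\varepsilon_{\pm}'$ in the $\xi$-variable, and the chain rule $\d/\dx=f^{1/2}\,\d/\d\xi$ converts this into the claimed estimate for $\tfrac{1}{2}f^{-1/2}|\varepsilon_{\pm}'(x)|$.

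\textbf{Step 4: Regularity and reality.} Since $f\in C^{2}$ and $g\in C$, the integrand in the Volterra equation is continuous, so $\varepsilon_{\pm}\in C^{1}$; rewriting the ODE gives $\varepsilon_{\pm}\in C^{2}$, hence $u_{\pm}\in C^{2}$. If $g$ is real, the kernel $K_{\pm}$, $\phi$ and the initial iterate are all real, so every iterate is real and so is the limit.

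\textbf{Main obstacle.} The subtlety is arranging the variation--of--parameters formula so that the kernel is bounded by $\tfrac{1}{2}$ uniformly on the half-line abutting $a_{\pm}$; without the correct choice of fundamental solution (and of the endpoint from which the integration starts) one obtains exponentially growing kernels and the bound by $\mathcal{V}(F)$ is lost. Once the correct Volterra equation is in place, the rest is a routine majorant/Picard argument.
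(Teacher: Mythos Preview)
The paper does not supply its own proof of this proposition; it simply quotes the result from Olver's \emph{Asymptotics and Special Functions}. Your sketch is precisely Olver's argument (Liouville transformation, Volterra equation with bounded kernel, Picard iteration controlled by $\mathcal{V}(F)$), so there is nothing to compare: you have reconstructed the cited proof.

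One small computational slip: in Step~1 the correct perturbation is $\phi = gf^{-1} - f^{-3/4}(f^{-1/4})''$, so that $\phi\,\d\xi = -\d F$ rather than $+\d F$ (and the first exponent should be $-3/4$, not $-1/4\cdot(-1)=-5/4$). This has no effect on the argument, since only $|\phi|\,\d\xi = \d|F|$ enters the estimates.
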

\begin{cor}
\label{cor:Solution-Approximate}
Let $ w \in A_{\lambda,k} $, with $ \lambda > 0 $, $ k \ne 0 $, then on the interval $(R_{0},\infty)$, $w$ is, up to a multiplicative constant, of the form
\begin{equation*}
w(x)
=[V(x)-E]^{-1/4}\exp\Big\{-h\int_{0}^{x}[V(t)-E]^{1/2}\dt\Big\}(1+\varepsilon(x))
\end{equation*}
with estimates $|\varepsilon(y)|+|\varepsilon'(y)|=O(h)$.
We can do the same on $(-\infty,-R_{0})$, and consequently~$w\in H^{1}(\R)$. Since $V\in L^{\infty}(\R)$, we deduce that $w\in D(P)$. Now that~$ \tilde{w} $ is a finite sum of such~$ w_{\lambda,k} $, we deduce that $ \tilde{w} \in H^1(\R) $.
\end{cor}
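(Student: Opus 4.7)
My plan is to apply the preceding WKB-type proposition to the second-order ODE satisfied by $w \in A_{\lambda,k}$, and then use the resulting asymptotic form to obtain the integrability of $w$ at infinity. Since $k \neq 0$, Corollary~\ref{cor:rotation-number-less-than-energy} forces $|k| < \lambda$, so the equation $-\partial_x^2 w + k^2 w = \lambda^2 \rho^2 w$ rewrites as $w'' = (f + g)\,w$ with the choice $f(x) = k^2 - \lambda^2 \rho^2(x)$ and $g = 0$. Choose $R_0 > 0$ so that $\lambda^2 \rho^2(x) < k^2/2$ for $|x| > R_0$; this is possible because $\rho(x) \to 0$ as $|x| \to \infty$. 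Then $f$ is smooth and bounded below by $k^2/2 > 0$ on each of the intervals $I_+ = (R_0, \infty)$ and $I_- = (-\infty, -R_0)$, so the hypotheses of the proposition are met.

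To obtain the stated $O(h)$ error estimate, I need to control the total variation of the primitive $F(x) = \int\bigl\{f^{-1/4}(f^{-1/4})'' - g f^{-1/2}\bigr\}\,\dx$ on $I_\pm$. I will exploit the exponential decay of $\rho$ and its derivatives at $\pm\infty$: from the isothermal-coordinate ODE $f'(x) = r(f(x))$ together with $r(0) = r(\pi) = 0$, $r'(0) = 1$ and $r'(\pi) = -1$ in Proposition~\ref{prop:Geometry-Zoll-Surface-Revolution}, a direct separation-of-variables argument gives $\rho(x) = O(e^{-|x|})$ at $\pm\infty$, with analogous decay for all higher derivatives. Consequently $f^{-1/4}(f^{-1/4})''$ is integrable on $I_\pm$ and $F$ has finite total variation; the near-resonance $h^2\lambda^2 = 1 + O(h^2)$ from Proposition~\ref{prop:spectral-zoll} then converts the leading contribution to size $O(h)$, allowing me to translate between the natural coefficient $k^2 - \lambda^2\rho^2$ and the displayed one $(V-E)/h^2$. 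The proposition furnishes the two WKB branches; the boundary condition $\lim_{|x|\to\infty}\partial_x^n w = 0$ from Proposition~\ref{prop:Laplacian-Eigenfunction-Evolution-Surface} rules out the growing one, forcing $w$ to be proportional to the decaying branch $u_-$ and yielding the asserted form on $I_+$ (and similarly on $I_-$).

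Once this is in hand, $w$ decays exponentially at $\pm\infty$, so $w \in L^2(\R)$. Since $\rho \in L^\infty(\R)$, the equation gives $w'' \in L^2(\R)$, whence $w \in H^2(\R) \hookrightarrow H^1(\R) \cap C(\R)$; and since $V = 1 - \rho^2 \in L^\infty(\R)$, also $Vw \in L^2(\R)$, so $w \in D(P)$. Finally, because the Laplacian on $\Sigma$ has compact resolvent, only finitely many eigenvalues $\lambda^2$ lie in the bounded interval $I_n$, so $\tilde{w} = \sum_{\lambda^2 \in I_n} w_{\lambda,k}$ is a finite sum of $H^1$ functions, giving $\tilde{w} \in H^1(\R) \cap D(P)$.

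The main technical obstacle will be extracting the sharp $O(h)$ size of $\varepsilon$: one must combine the exponential decay of $\rho$ with the near-resonance $h\lambda = 1 + O(h)$ carefully enough that the residual term $(\lambda^2 - h^{-2})\rho^2$ arising from the discrepancy between the natural coefficient of the ODE and the scaled coefficient $(V-E)/h^2$ in the displayed formula is controlled within the advertised tolerance. Everything else reduces to explicit estimates on the separable ODE $f' = r(f)$ near the poles, which are routine.
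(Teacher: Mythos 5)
Your proposal is correct and follows essentially the same route as the paper: apply the preceding Olver/WKB proposition with $g=0$, bound the total variation of $F$ using the decay of $\rho$ at infinity, use the boundary conditions to select the decaying branch, and deduce $w\in H^1(\R)\cap D(P)$ and then $\tilde w\in H^1(\R)$ by finiteness of the sum. The only substantive difference is cosmetic: you take $f=k^2-\lambda^2\rho^2$ (the exact coefficient) and translate afterwards to the displayed $h^{-2}(V-E)$ using $|\lambda^2-h^{-2}|\le A$, whereas the paper takes $f=h^{-2}(V-E)$ directly; and you bound $\mathcal{V}(F)$ via the exponential decay $\rho=O(e^{-|x|})$ (correct, and provable from $f'=r(f)$ with $r'(0)=1$, $r'(\pi)=-1$), whereas the paper only uses $\rho',\rho''=O(\rho^2)$ together with $\int_\R\rho^2\,\dx=\mathrm{vol}(M)/2\pi<\infty$. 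One blemish: you invoke Corollary~\ref{cor:rotation-number-less-than-energy} to get $|k|<\lambda$, but that corollary is itself proved using the present one (its proof cites Corollary~\ref{cor:Solution-Approximate} to get $w\in H^1\subset C$), so the citation is circular; fortunately your argument never actually uses $|k|<\lambda$ — the choice of $R_0$ with $\lambda^2\rho^2<k^2/2$ for $|x|>R_0$ needs only $\rho\to 0$ — so you should simply delete that sentence.
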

\begin{proof}
Apply the previous proposition with $f=h^{-2}\left(V-E\right)$ and
$g=0.$ Then 
\[
F(x)=h\int_{c}^{x}\left[V(t)-E\right]^{-1/4}\partial_{t}^{2}\left[V(t)-E\right]^{-1/4}\dt,
\]
from which, for $x>R_{0}$,
\begin{align*}
\mathcal{V}_{x,\infty}(F)
 & \le Ch\delta^{-5/2}\big(\|{r'}\|_{L^{\infty}}^{2}+\|{r''}\|_{L^{\infty}}\big)\int_{\R}\rho^{2}(t)\,\dt =O(h)
\end{align*}
since $V(x)=1-\rho^{2}(x)=1-r^{2}\left(f(x)\right)$, $f'(x)=r\left(f(x)\right)$, and that
\begin{equation*}
\int\rho^{2}(t)\,\dt
=\frac{1}{2\pi}\int_{0}^{2\pi}\d\varphi\int_\R\rho^{2}(x)\,\dx
=\frac{1}{2\pi}\mathrm{vol}(M)<\infty.
\end{equation*} 
\end{proof}

\begin{cor}
\label{cor:Use-Lithner-Agmon-Lose-Weight}
There exists $ \epsilon_0 > 0 $, $ h_0 > 0 $, $ C > 0 $, such that for $ 0 < \epsilon < \epsilon_0 $, $ 0 < h < h_0 $, if $ (\epsilon,h,k,\tilde{w}) $ is an admissible 4-tuple, then
\begin{equation*}
C^{-1}\|\tilde{w}\|_{L^2} \le \|\tilde{w}\|_{L^2(\rho^{2}\dx)} \le C \|\tilde{w}\|_{L^2};
\end{equation*}
Suppose there exists $ \epsilon_0 > 0 $, $ h_0 > 0 $, $ C > 0 $, such that for $ 0 < \epsilon < \epsilon_0 $, $ 0 < h < h_0 $, if $ (\epsilon,h,k,\tilde{w}) $ is an admissible 4-tuple, then
\begin{equation*}
\|1_{x>0}\tilde{w}\|_{L^{2}} \ge C \|\tilde{w}\|_{L^{2}},
\end{equation*}
then there exists $ \epsilon'_0 > 0 $, $ h'_0 > 0 $, $ C' > 0 $, such that for $ 0 < \epsilon < \epsilon'_0 $, $ 0 < h < h'_0 $, if $ (\epsilon,h,k,\tilde{w}) $ is an admissible 4-tuple, then
\begin{equation*}
\|1_{x>0}\tilde{w}\|_{L^{2}(\rho^2\dx)} \ge C' \|\tilde{w}\|_{L^{2}(\rho^2\dx)}.
\end{equation*}
\end{cor}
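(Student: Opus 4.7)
The plan is to leverage the Lithner-Agmon estimates developed in Section~\ref{sec:Lithner-Agmon-Estimates} to show that $\tilde{w}$ is exponentially concentrated in a small fixed neighborhood of the equator $x=0$, where the weight~$\rho$ is close to~$1$. Once this concentration is established, both parts of the corollary reduce to near-trivial local comparisons, and the observability implication follows from the same splitting argument applied to $1_{x>0}\tilde{w}$.

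First, I verify the abstract assumption of the Lithner-Agmon framework in our setting. The function $\tilde{w}$ satisfies $(-h^{2}\partial_{x}^{2}+V)\tilde{w}=E\tilde{w}+O(h^{2})_{L^{2}\to L^{2}}\tilde{w}$ with potential $V=1-\rho^{2}$ independent of~$h$, and energy $E=1-h^{2}k^{2}$ satisfying $|E|\le\epsilon=o(1)$ by admissibility. Since $V(0)=0$, $V>0$ for $x\ne 0$, and $V(x)\to 1$ as $|x|\to\infty$, for any fixed Lithner-Agmon parameter~$\varepsilon$ one can find $R_{\varepsilon}$, $\delta_{\varepsilon}>0$ (independent of $k$) such that $V-E\ge\delta_{\varepsilon}$ for $|x|\ge R_{\varepsilon}$ once the admissibility parameter~$\epsilon$ is small enough; and by choosing $R_{\varepsilon}$ small, one has $\Phi^{\varepsilon}\le\varepsilon$ on $|x|\le R_{\varepsilon}$ thanks to $V\sim cx^{2}$ near the origin.

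Next, I apply Corollary~\ref{Lithner-Agmon-Estimate-Inhomogeneous-Decay-in-Classical-Forbidden-Region} with the remainder $f=O(h^{2})_{L^{2}\to L^{2}}\tilde{w}$ playing the role of the inhomogeneous term, so that $\|f\|_{L^{2}}\lesssim h^{2}\|\tilde{w}\|_{L^{2}}$. Choosing $\chi=1_{|x|\ge R_{0}}$ for some fixed $R_{0}>R_{\varepsilon}$ (after adjusting the phase as in the remark following that corollary) gives
\begin{equation*}
\|1_{|x|\ge R_{0}}\tilde{w}\|_{L^{2}}^{2}\le C_{\varepsilon}\bigl(e^{-2(R_{0}-\varepsilon)/h}+h^{4}\bigr)\|\tilde{w}\|_{L^{2}}^{2}=o(1)\|\tilde{w}\|_{L^{2}}^{2},
\end{equation*}
uniformly over admissible 4-tuples as $h\to 0$.

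Finally, since $\rho$ is continuous with $\rho(0)=1$, I fix $R_{0}$ small enough that $\rho\ge c_{0}>0$ on $|x|\le R_{0}$ (say $c_{0}=1/2$). The norm equivalence then follows from
\begin{equation*}
c_{0}^{2}(1-o(1))\|\tilde{w}\|_{L^{2}}^{2}\le c_{0}^{2}\|\tilde{w}\|_{L^{2}(|x|\le R_{0})}^{2}\le\|\tilde{w}\|_{L^{2}(\rho^{2}\dx)}^{2}\le\|\tilde{w}\|_{L^{2}}^{2},
\end{equation*}
the last inequality because $\rho\le 1$. For the observability statement, the same decomposition applied to $1_{x>0}\tilde{w}$ yields
\begin{equation*}
\|1_{x>0}\tilde{w}\|_{L^{2}(\rho^{2}\dx)}^{2}\ge c_{0}^{2}\|1_{0<x\le R_{0}}\tilde{w}\|_{L^{2}}^{2}\ge c_{0}^{2}\bigl(\|1_{x>0}\tilde{w}\|_{L^{2}}^{2}-o(1)\|\tilde{w}\|_{L^{2}}^{2}\bigr),
\end{equation*}
and invoking the hypothesis $\|1_{x>0}\tilde{w}\|_{L^{2}}\ge C\|\tilde{w}\|_{L^{2}}$ together with the norm equivalence gives $\|1_{x>0}\tilde{w}\|_{L^{2}(\rho^{2}\dx)}\gtrsim\|\tilde{w}\|_{L^{2}(\rho^{2}\dx)}$. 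The only mild obstacle is the bookkeeping to ensure the Lithner-Agmon constants are uniform over admissible 4-tuples, which is automatic since~$V$ is fixed and admissibility forces $E=o(1)$ uniformly in~$k$.
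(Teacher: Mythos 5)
Your proof is correct and follows essentially the same route as the paper: the inhomogeneous Lithner--Agmon estimate of Corollary~\ref{Lithner-Agmon-Estimate-Inhomogeneous-Decay-in-Classical-Forbidden-Region} with $f=O(h^2)_{L^2\to L^2}\tilde{w}$ confines all but an $o(1)$ fraction of the mass of $\tilde{w}$ to a fixed compact set $\{|x|\le R_0\}$ where $\rho$ is bounded below, and both the norm equivalence and the transfer of observability then follow by elementary comparison, exactly as in the paper. One cosmetic point: you cannot choose $R_0$ \emph{small} (it must be large enough that $1_{|x|\ge R_0}$ is supported where the Agmon phase has saturated), but this is harmless since $\inf_{|x|\le R_0}\rho>0$ for any finite $R_0$, which is all the argument needs.
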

\begin{proof}
There is no harm in assuming $ \|\tilde{w}\|_{L^2} = 1 $, and apply Corollary~\ref{Lithner-Agmon-Estimate-Inhomogeneous-Decay-in-Classical-Forbidden-Region} with 
\begin{equation*}
f = O(h^2)_{L^2\to L^2} \tilde{w} = O(h^2).
\end{equation*}
To do this, we fix $0<\varepsilon<1$ (please do not get confused with~$ \epsilon $), and fix $R>2\varepsilon$, then for some $R_{0}>0$, $\chi=1_{|x|>R_{0}}$ is supported in $\{\Phi_{R}^{\varepsilon}=R\}$. Then Corollary~\ref{Lithner-Agmon-Estimate-Inhomogeneous-Decay-in-Classical-Forbidden-Region} implies that, for some constant $ C_\varepsilon > 0 $, 
\begin{equation*}
\|1_{|x|>R_0}\tilde{w}\|_{L^2(\rho^2\dx)} \le \|1_{|x|>R_0}\tilde{w}\|_{L^2} \le C_\varepsilon h^2
\end{equation*}
Let $ \delta = \inf_{|x|<R_{0}} \rho(x) > 0 $, then 
\begin{align*}
1 = \|\tilde{w}\|_{L^2} 
& \ge \|\tilde{w}\|_{L^2(\rho^{2}\dx)}
\ge \|1_{|x|<R_{0}} \tilde{w}\|_{L^2(\rho^{2}\dx)}
\ge \delta^{-1} \|1_{|x|<R_{0}}\tilde{w}\|_{L^2} \\
& \ge \delta^{-1} (1-\|1_{|x|>R_{0}}\tilde{w}\|_{L^2}) 
 \ge \delta^{-1} (1- C_\varepsilon h^2)
 \ge \frac{1}{2} \delta^{-1},
\end{align*}
when $ h $ is sufficiently small. This proves the first statement. To prove the second statement,
\begin{align*}
\|1_{x>0}\tilde{w}\|_{L^{2}(\rho^2\dx)}/\|\tilde{w}\|_{L^{2}(\rho^2\dx)}
& \ge \|1_{R_0>x>0}\tilde{w}\|_{L^{2}(\rho^2\dx)} / \|\tilde{w}\|_{L^{2}} \\
& \ge \delta^{-1} \|1_{R_0>x>0}\tilde{w}\|_{L^{2}} / \|\tilde{w}\|_{L^{2}} \\
& \ge \delta^{-1} \big(\|1_{x>0}\tilde{w}\|_{L^{2}} - \|1_{x>R_0}\tilde{w}\|_{L^{2}} \big) / \|\tilde{w}\|_{L^{2}} \\
& \ge \delta^{-1} (C - C_\varepsilon h^2) \\
& \ge \frac{1}{2} \delta^{-1} C,
\end{align*}
when $ h $ is sufficiently small.
\end{proof}

\subsubsection{Case $E=O(h)$}
\label{sub:E=O(h)}

\begin{prop}
\label{prop:E=O(h)}
Let $ (\epsilon,h,k,\tilde{w}) $ be an admissible sequence such that $ E = O(h) $, then for some $ C > 0 $ and $ \epsilon $, $ h $ sufficiently small,
\begin{equation*}
\|1_{x>0} \tilde{w}\|_{L^2} \ge C \|\tilde{w}\|_{L^2}.
\end{equation*}
\end{prop}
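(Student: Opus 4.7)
The plan is to rescale the one-dimensional Schrödinger equation~\eqref{eq:equation-w-A(n,k)} to a standard harmonic oscillator equation, extract a nonzero $L^2$ limit by a Lithner-Agmon tail estimate together with compactness, and then conclude from the definite parity of Hermite functions.

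\textbf{Step 1 (Rescaling).} Argue by contradiction. Suppose we have an admissible sequence $(\epsilon,h,k,\tilde w)$ with $E = O(h)$, $\|\tilde w\|_{L^2} = 1$, and $\|1_{x>0}\tilde w\|_{L^2} = o(1)$. Set $z = c^{1/4} h^{-1/2} x$ and
$$ \psi(z) = c^{-1/8} h^{1/4}\,\tilde w\bigl(c^{-1/4} h^{1/2} z\bigr), $$
so that $\|\psi\|_{L^2(\dz)} = \|\tilde w\|_{L^2(\dx)} = 1$ and $\|1_{z>0}\psi\|_{L^2} = \|1_{x>0}\tilde w\|_{L^2} = o(1)$. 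Substituting into~\eqref{eq:equation-w-A(n,k)}, using $V(x) = c x^2 + O(x^3)$ near $x=0$, and dividing by $c^{1/2} h$ yields
$$ (-\partial_z^2 + z^2)\psi = F_h\,\psi + r_h, $$
with $F_h = c^{-1/2} h^{-1} E$ and $r_h = O(h^{1/2})\,z^3 \psi + O(h)_{L^2\to L^2}\psi$. Since $-c_0 h^2 \lesssim E \le \epsilon$, the sequence $F_h$ is bounded, so along a subsequence $F_h \to F$ for some $F \ge 0$.

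\textbf{Step 2 (Tail estimates).} Apply the homogeneous Lithner-Agmon estimate (Corollary~\ref{prop:Lithner-Agmon-Estimate-Homogeneous}) to $\tilde w$ with the potential $V - E$, treating the $O(h^2)_{L^2\to L^2}\tilde w$ term via its inhomogeneous counterpart (Proposition~\ref{prop:Lithner-Agmon-Estimate-Inhomogeneous}). Since $V(x) - E \gtrsim c x^2 / 2$ for $|x|$ bounded and $E = o(1)$, the Agmon phase satisfies $d(x,0) \gtrsim c^{1/2} x^2/2$, and one obtains, for every $\eta > 0$, some $M > 0$ such that $\|1_{|x| > M h^{1/2}} \tilde w\|_{L^2} \le \eta$ for small $h$. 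In the rescaled variable this gives
$$ \|1_{|z| > M} \psi\|_{L^2(\dz)} \le \eta, $$
uniformly in $h$. Testing the rescaled equation against $\psi$ also yields $\|z\psi\|_{L^2} + \|\partial_z \psi\|_{L^2} = O(1)$.

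\textbf{Step 3 (Passage to the harmonic oscillator).} The domain $\mathcal{D} = \{ \psi \in H^1(\R) : z\psi \in L^2(\R) \}$ of the harmonic oscillator embeds compactly into $L^2(\R)$. By Step~2, $\psi$ lies in a bounded set of $\mathcal{D}$, so up to a further subsequence $\psi \to \psi_\infty$ strongly in $L^2(\R)$, with $\|\psi_\infty\|_{L^2} = 1$ by the uniform tail bound. The remainder $r_h$ tends to $0$ in $L^2_{\loc}(\R)$, because $h^{1/2} z^3 \psi$ is controlled on compact sets by $\|z\psi\|_{L^2}$ times $h^{1/2}$. Passing to the limit in the rescaled equation yields
$$ (-\partial_z^2 + z^2)\,\psi_\infty = F\,\psi_\infty, $$
so $F = 2m + 1$ for some $m \in \N$ and $\psi_\infty$ is a scalar multiple of the Hermite function $h_m$.

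\textbf{Step 4 (Parity contradiction).} Every Hermite function has a definite parity, hence
$$ \|1_{z>0}\psi_\infty\|_{L^2} = \|1_{z<0}\psi_\infty\|_{L^2} = \frac{1}{\sqrt 2}. $$
By $L^2$ convergence, $\|1_{z>0}\psi\|_{L^2} \to 1/\sqrt{2}$. But $\|1_{z>0}\psi\|_{L^2} = \|1_{x>0}\tilde w\|_{L^2} = o(1)$, a contradiction. This establishes the claimed uniform observability.

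The main obstacle is the combination of Steps~2 and~3: the Lithner-Agmon estimates must be invoked with parameters ($\tau = h$, energy $E = O(h)$, potential $V = c x^2 + O(x^3)$) for which the classically allowed region has width $\sim h^{1/2}$, precisely matching the rescaling, so that after the change of variable the tails of $\psi$ sit in a compact set of $z$; this compactness of the harmonic oscillator's domain is then what provides the nontrivial strong limit, and any slippage in these estimates (notably in handling the $O(h^{1/2})\,z^3 \psi$ error for $|z|$ of size $h^{-1/2}$) would prevent the conclusion.
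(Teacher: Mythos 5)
Your overall architecture is sound and coincides with the paper's essential mechanism: rescale by $z=c^{1/4}h^{-1/2}x$ to turn the well of width $h^{1/2}$ into the harmonic oscillator, localize the mass in a bounded region of $z$, identify the limiting profile as a Hermite function, and conclude from its definite parity. Where you genuinely diverge is in Steps~3--4: you work directly with $\tilde{w}\in\tilde{A}_{n,k}$ and extract a strong $L^2$ limit via the compact embedding of the oscillator domain $\{\psi\in H^1:z\psi\in L^2\}\hookrightarrow L^2$, whereas the paper first treats a single normalized $w\in A_{\lambda,k}$, expands its truncation in the Hermite basis, uses the spectral gap $|2i+1-F|\ge|i-i_0|$ to show $w=v_{i_0}+o(1)_{L^2}$, and then needs an extra orthogonality argument (Corollary~\ref{cor:orthogonality-A(lamnbda,k)}) to show that at most one $\lambda^2\in I_n$ contributes, so that the conclusion transfers to $\tilde{w}$. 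Your route lets you skip that last step entirely, since $\tilde{w}$ itself satisfies~\eqref{eq:equation-w-A(n,k)} with an $O(h^2)_{L^2}$ remainder; the paper's route is more quantitative and avoids soft compactness. Both are legitimate.

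The one step you should repair is Step~2. The localization $\|1_{|x|>Mh^{1/2}}\tilde{w}\|_{L^2}\le\eta$ is true, but it does not follow from the semiclassical ($\tau=h$) Lithner--Agmon estimate applied in the $x$ variable in the form the paper provides: that estimate carries a factor $e^{2\varepsilon/h}$ on the right-hand side (coming from $\sup_{|x|\le R_\varepsilon}e^{2\Phi_R^{\varepsilon}/h}$), while on $\{|x|\sim Mh^{1/2}\}$ the phase satisfies only $\Phi^{\varepsilon}(x)/h=O(M^2)$, so the resulting bound is $e^{2\varepsilon/h}e^{-O(M^2)}$, which blows up as $h\to 0$. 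In that form the estimate only resolves fixed neighborhoods of $x=0$, not the scale $h^{1/2}$. The fix is exactly the paper's: apply the classical version $\tau\equiv 1$ to the rescaled operator $-\partial_z^2+V_h(z)$ with bounded energy $F_h$, for which $\Phi^{\varepsilon}(z)\ge\alpha(|z|-M)$ and the loss factor is the harmless constant $e^{2\varepsilon}$, giving $\|\psi\|_{L^2(|z|\ge R)}=O(e^{-\alpha R})$. Relatedly, your bound $\|z\psi\|_{L^2}=O(1)$ does not follow from testing the equation against $\psi$ alone, because $V_h(z)\gtrsim z^2$ fails for $|z|\gg h^{-1/2}$ (there $V_h\sim c^{-1/2}h^{-1}$ while $z^2\to\infty$); you must combine the energy identity with the exponential tail bound to control $\int_{|z|\ge\delta h^{-1/2}}z^2|\psi|^2\,\dz$. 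With these two repairs the argument closes.
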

\begin{proof}
We first study Laplacian eigenfunctions, rather than $ \L $-eigenfunctions for simplicity, for the latter are finite sums of the former. To do this, we let $ \lambda^2 \in I_n $, $ k \in Z_n(\epsilon) $, and $ w \in A_{\lambda,k} $. Recall that~$ w $ satisfies
\begin{equation*}
(-h^2\partial_x^2 + V) w = E w + O(h^2)_{L^\infty} w.
\end{equation*}
Up to a subsequence, we may assume that $ c^{-1/2} h^{-1} E = F + o(1) $ for some $ F \ge 0 $, and use the following scaling,
\begin{equation*}
z = c^{1/4} h^{-1/2} x, \quad V_h(z) = c^{-1/2}h^{-1} V(x),
\end{equation*}
and work under the coordinate~$ z $, and with the measure~$ \dz $. We normalize~$ w $ so that $ \|w\|_{L^2} = 1 $, and observe that it satisfies the equation
\begin{equation*}
(-\partial_z^2 + V_h) w = F w + o(1)_{L^\infty} w.
\end{equation*}
Notice that $ V_h(z) = z^2 + h^{1/2} O(z^3) $ for $ |z| \lesssim h^{-1/2} $, we apply Proposition~\ref{prop:Lithner-Agmon-Estimate-Homogeneous} with
\begin{equation*}
\tau(h) \equiv 1, \quad P(\tau) = -\partial_z^2 + V_h(z),
\quad \Phi^{\varepsilon}(z)=(1-\varepsilon)\Big|\int_{0}^{z}\big(V_{h}(t)-F-o(1)_{L^\infty}\big)_{+}^{1/2}\dt\Big|,
\end{equation*}
and get
\begin{equation*}
\|(e^{\Phi^{\varepsilon}}w)'\|_{L^{2}}^{2}
+\| e^{\Phi^{\varepsilon}}w\|_{L^{2}}^{2}
\le C_\varepsilon\|w\|_{L^{2}}^{2},
\end{equation*}
which implies 
\begin{equation*}
\| e^{\Phi^{\varepsilon}}w\|_{L^{2}}
+\|h^{1/2} e^{\Phi^{\varepsilon}} w'\|_{L^{2}}
\le C_\varepsilon\|w\|_{L^{2}}.
\end{equation*}
Indeed, 
\begin{align*}
\|h^{1/2} e^{\Phi^{\varepsilon}} w'\|_{L^{2}} 
&\le \|h^{1/2}(e^{\Phi^{\varepsilon}}w)'\|_{L^{2}}
+\|h^{1/2}(\Phi^{\varepsilon})'e^{\Phi^{\varepsilon}}w\|_{L^{2}}\\
& \le h^{1/2} C_\varepsilon \|w\|_{L^{2}}
+(1-\varepsilon)(\|V\|_{L^{\infty}}+O(h))^{1/2}\|e^{\Phi^{\varepsilon}}w\|_{L^{2}}\\
 & \le C_\varepsilon\|w\|_{L^{2}}.
\end{align*}
Since $\Phi^{\varepsilon}(z)\ge\alpha(|z|-M)$ for some $\alpha > 0$, $ M>0$ and is uniform for all small $\varepsilon$, $h$, we then have
\begin{equation*}
\|w\|_{L^{2}(|z|\ge R)}+\|h^{1/2}\partial_{z}w\|_{L^{2}(|z|\ge R)}=O(e^{-\alpha R})\|w\|_{L^{2}}.
\end{equation*}
Fix some $0<\delta<1/6$, and let $w_\chi=\chi(h^{\delta}z)w(z)$ where $\chi\in C^{\infty}_{0}$ is a cut-off function equaling to 1 near the origin. Therefore 
\begin{equation*}
w
=w_\chi + O(h^{-1/2}e^{-\alpha h^{-\delta}})_{H^{1}}
=w_\chi + O(h^{\infty})_{H^1}.
\end{equation*}
Observing that on the support of $w_\chi$, $V_{h}(z) = z^{2} + O(h^{1/2-3\delta})$, we have,
\begin{equation}
\label{eq:Equation-for-w-tilde}
\begin{split}
(-\partial_{z}^{2}+z^{2}-F) w_\chi 
& = o(1)_{L^{\infty}} w + [\partial_{z}^{2},\chi(h^{\delta}z)] w \\
& =o(1)_{L^{\infty}} w + 2h^{\delta}\chi'(h^{\delta}z) w' + h^{2\delta}\chi''(h^{\delta}z) w \\
& =o(1)_{L^{2}}.
\end{split}
\end{equation}

Let $\left\{ v_{i}\right\} _{i\in\N}$ be the complete set of normalized
eigenfunctions of the classical harmonic oscillator, $-\partial_{z}^{2}+z^{2}$, that is, $ \|v_i\|_{L^2} = 1 $, and
\begin{equation*}
(-\partial_{z}^{2}+z^{2})v_{i}=(2i+1)v_{i}.
\end{equation*}
We know that $ v_i(z) = c_i H_i(z) e^{-z^2/2} $, where~$ c_i $ is a constant of normalization, and~$ H_i $ is the $ i^{\mathrm{th}} $~Hermite polynomial. We will only use the fact that $ H_i $ is either an odd function (when $ i $ is odd), or an even function (when $ i $ is even).

We write $ w_\chi = \sum \alpha_{i}v_{i}$, and have
\begin{equation}
\label{eq:L2-norm-for-w-tilde}
\sum_{i\ge0}|\alpha_{i}|^{2}=\|w_\chi\|_{L^{2}}^{2}=\|w\|_{L^{2}}^{2}+o(1)=1+o(1).
\end{equation}
By (\ref{eq:Equation-for-w-tilde}),
\begin{equation*}
o(1)_{L^{2}}  
=(-\partial_{z}^{2}+z^{2}-F) w_\chi
=\sum_{i\ge0}(2i+1-F)\alpha_{i}v_{i},
\end{equation*}
which gives 
\begin{equation}
\sum_{i\ge0}(2i+1-F)^{2}|\alpha_{i}|^{2}=o(1).
\label{eq:Consequence-of-Eq-for-w-tilde}
\end{equation}
Let $i_{0} \in \N $ be such that $|2i_{0}+1-F|$ attains the minimum among all $|2i+1-F_{0}|$. Then for any integer $i\ne i_{0}$, $|2i+1-F_{0}|\ge|i-i_{0}|$, and hence, 
\begin{equation*}
\big\|\sum_{i\ne i_{0}}\alpha_{i}v_{i}\big\|_{L^{2}}^{2}
=\sum_{i\ne i_{0}}\left|\alpha_{i}\right|^{2}=o(1).
\end{equation*}
Combining with~\eqref{eq:L2-norm-for-w-tilde}, we have $ \alpha_{i_{0}}=1+o(1) $. And by consequence,
\begin{equation*}
w
= w_\chi + o(1)_{L^2}
=\alpha_{i_{0}}v_{i_{0}}+\sum_{i\ne i_{0}}\alpha_{i}v_{i} + o(1)_{L^2}
=\alpha_{i_{0}}v_{i_{0}}+o(1)_{L^2}
=v_{i_{0}}+o(1)_{L^2}.
\end{equation*}
Moreover, we have by~\eqref{eq:Consequence-of-Eq-for-w-tilde}, that $(2i_{0}+1-F)\left|\alpha_{i_{0}}\right|^{2}=o(1)$, which implies 
\begin{equation*}
F=2i_{0}+1.
\end{equation*}
In particular~$ i_0 $ depends only on~$ F $, not on~$ \lambda $. As a consequence, we claim that, for this admissible subsequence, which satisfies $ E = O(h) $, when $ \epsilon $ and $ h $ are sufficiently small, there can be at most one $ \lambda^2 \in I_n $, such that $ A_{\lambda,k} \ne \{0\} $. Therefore, $ \tilde{A}_{n,k} = A_{\lambda,k} $. So if $ \tilde{w} \in \tilde{A}_{n,k} $, then $ \tilde{w} = v_{i_0} + o(1)_{L^2} $. This concludes the proof, since~$ v_{i_0} $ is either an odd function, or an even function, whose $ L^2 $ norm is thus equally distributed on each side of the origin.

To prove the claim, we argue by contradiction and use the orthogonality given by Corollary~\ref{cor:orthogonality-A(lamnbda,k)}. Indeed, suppose we can find for arbitrary small~$ \epsilon $ and~$ h $ two distinct $ \lambda_1 $, $ \lambda_2 $ such that $ \lambda_i^2 \in I_n $, $ (i=1,2) $, and that $ A_{\lambda_i,k} \ne \{0\} $, then we can choose $ w_i \in A_{\lambda_i,k} $, such that $ \|w_i\|_{L^2} = 1 $. By the analysis above, we see that $ w_i = v_{i_0} + o(1)_{L^2} $. Using the orthogonality of~$ w_1 $ and~$ w_2 $ with respect to $ L^2(\rho^2\dz) $,
\begin{equation*}
0 = (w_1,w_2)_{L^2(\rho^2\dz)} = (v_{i_0},v_{i_0})_{L^2(\rho^2\dz)} + o(1) \to (v_{i_0},v_{i_0})_{L^2(\rho^2\dz)} \ne 0,
\end{equation*}
we obtain a contradiction.
\end{proof}

\subsubsection{Case $E \gg h$}
\label{sub:E-gg-h}

\begin{prop}
\label{prop:E-gg-h}
Let $ (\epsilon,h,k,\tilde{w}) $ be an admissible sequence such that $ E \gg h $, then for some $ C > 0 $, and $ \epsilon $, $ h $ sufficiently small,
\begin{equation*}
\|1_{x>0} \tilde{w}\|_{L^2} \ge C \|\tilde{w}\|_{L^2}.
\end{equation*}
\end{prop}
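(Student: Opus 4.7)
The plan is to execute the energy-adapted scaling outlined in the introduction. Set $z = c^{1/2}E^{-1/2}x$ and $\h = c^{1/2}hE^{-1}$; since $h \ll E = o(1)$, we have $\h \to 0$ and $h^2/E = \h^2 E/c = o(\h^2)$. Letting $\tilde{w}_\h(z) = (c^{-1/2}E^{1/2})^{1/2}\tilde{w}(c^{-1/2}E^{1/2}z)$ preserve the $L^2$ norm, equation~\eqref{eq:equation-w-A(n,k)} divided by $E$ becomes, in a neighborhood of $z = 0$ where the Taylor expansion of $V$ is valid,
\begin{equation*}
(-\h^2\partial_z^2 + z^2 + q_\h(z))\tilde{w}_\h = \tilde{w}_\h + o(\h^2)_{L^2}\tilde{w}_\h, \qquad q_\h(z) = O(E^{1/2}z^3).
\end{equation*}
To extend this equation globally and ensure $q_\h$ is uniformly small, I would apply the homogeneous Lithner-Agmon estimate (Corollary~\ref{prop:Lithner-Agmon-Estimate-Homogeneous}) to localize $\tilde w_\h$ to the classical allowed region $|z| \lesssim 1$: with a fixed $\chi \in C_c^\infty(\R)$ equal to $1$ on $[-2,2]$, one gets $\tilde{w}_\h = \chi\tilde{w}_\h + O(\h^\infty)_{H^1}$, and on $\supp\chi$ the Taylor expansion holds with $\|q_\h\|_{L^\infty} + \|\partial_z q_\h\|_{L^\infty} = O(E^{1/2}) = o(1)$.

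Let $\nu$ denote the $\h$-semiclassical measure of $\chi\tilde{w}_\h$ (extracting a subsequence if needed). Since $(-\h^2\partial_z^2 + z^2 - 1)(\chi\tilde{w}_\h) = -q_\h\chi\tilde{w}_\h + o(1)_{L^2} = o(1)_{L^2}$, we have $\supp\nu \subset \{\zeta^2 + z^2 = 1\}$. For flow invariance, the usual ``$o(\h)$-remainder implies $H_p\nu = 0$'' theorem does not apply since $\|q_\h\|_{L^\infty}$ need not be $o(\h)$; instead, using self-adjointness of both $A$ (a Weyl quantization of a real compactly supported symbol $a$) and of $-\h^2\partial_z^2 + z^2$, one has
\begin{equation*}
\langle [A, -\h^2\partial_z^2 + z^2]\chi\tilde{w}_\h, \chi\tilde{w}_\h\rangle = -\langle [A, q_\h]\chi\tilde{w}_\h, \chi\tilde{w}_\h\rangle + o(\h^2),
\end{equation*}
and by symbol calculus $\|[A, q_\h]\|_{L^2\to L^2} = O(\h\|\partial_z q_\h\|_{L^\infty} + \h^2) = o(\h)$. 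Hence $\tfrac{1}{\h}\langle [A, -\h^2\partial_z^2 + z^2]\chi\tilde{w}_\h, \chi\tilde{w}_\h\rangle = o(1)$, so $\int H_p a\,d\nu = 0$ for every such $a$, with $p = \zeta^2 + z^2$. Thus $\nu$ is rotation-invariant, a constant multiple of the Haar measure on the unit circle.

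Consequently $\nu(\{z > 0\}) = \nu(\{z < 0\})$ and $\nu(\{z = 0\}) = 0$, so $\|1_{z > 0}\chi\tilde{w}_\h\|_{L^2}^2/\|\chi\tilde{w}_\h\|_{L^2}^2 \to 1/2$. Rewriting in the original $x$ variable and using $\chi\tilde{w}_\h = \tilde{w}_\h + O(\h^\infty)_{L^2}$, we obtain $\|1_{x > 0}\tilde{w}\|_{L^2} \ge (\tfrac{1}{2} - o(1))\|\tilde{w}\|_{L^2}$, which proves the proposition with, say, $C = 1/4$. The main obstacle is precisely the flow-invariance step: the sub-principal perturbation $q_\h$ has $L^\infty$ norm $O(E^{1/2})$, possibly much larger than $\h = \sqrt{c}h/E$, but its derivative $\partial_z q_\h$ is small in $L^\infty$ on the localized support, which is exactly what is needed to make $[A, q_\h]$ have operator norm $o(\h)$ and thus conclude rotation invariance of~$\nu$.
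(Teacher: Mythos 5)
Your proposal is correct and follows essentially the same route as the paper: the same scaling $z=c^{1/2}E^{-1/2}x$, $\h=c^{1/2}E^{-1}h$, Lithner--Agmon localization to $|z|\lesssim 1$, and the $\h$-semiclassical measure supported on $\{z^2+\zeta^2=1\}$ and invariant under rotation; your explicit commutator estimate for $[A,q_\h]$ is just a spelled-out version of what the paper obtains by noting that $\tilde{\chi}V_E\to\tilde{\chi}(z)z^2$ in $C_c^\infty$ and invoking Corollary~\ref{cor:Semiclassical-Measure-Modified} (remark after Theorem~\ref{thm:prop-sm}). One small correction: since the equation for $\tilde{w}$ carries the source term $O(h^2)\rho^2\tilde{w}$, the localization step should cite the inhomogeneous estimate (Corollary~\ref{Lithner-Agmon-Estimate-Inhomogeneous-Decay-in-Classical-Forbidden-Region}) rather than the homogeneous one, which yields $o(\h)_{L^2}$ rather than $O(\h^\infty)_{H^1}$ outside the cutoff --- still amply sufficient.
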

\begin{proof}
We use the scaling
\begin{equation*}
z=c^{1/2} E^{-1/2}x,
\quad \h=c^{1/2} E^{-1} h,
\quad V_{E}(z)=E^{-1}V(x),
\end{equation*}
and work under the~$ z $ coordinate and the measure~$ \dz $. We normalize~$ \tilde{w} $ by $ \|\tilde{w}\|_{L^2} = 1 $, and observe that it satisfies the equation
\begin{equation}
\label{eq:Harmonic-Oscillator-E-gg-h}
(-\h^{2}\partial_{z}^{2}+V_{E}) \tilde{w}
= \tilde{w} + O(h^2/E) \rho^2 \tilde{w}
= \tilde{w} + o(\h)_{L^2}.
\end{equation}
Let $\chi\in C^\infty_{c}(\R)$ be equal to~1 in a neighbourhood of $|z|\le 1$, and $0\le\chi\le 1$, then we apply Corollary~\ref{Lithner-Agmon-Estimate-Inhomogeneous-Decay-in-Classical-Forbidden-Region} and the remark after it,
\begin{equation*}
\|(1-\chi) \h \tilde{w}'\|_{L^2}^{2} + \|(1-\chi) \tilde{w}\|_{L^2}^{2} 
= O(\h^{\infty}) \|\tilde{w}\|_{L^2}^{2} + o(h^{2}) = o(\h^{2}),
\end{equation*}
which implies 
\begin{equation}
\label{eq:chi-tilde-w=1}
\|\chi \tilde{w} \|_{L^2} = 1 + o(\h).
\end{equation}
In order to conclude, it suffices to prove that, for some $ \delta > 0 $, and~$ \h $ sufficiently small,
\begin{equation}
\label{eq:observability-E-gg-h-z}
\|1_{z>0} \chi\tilde{w}\|_{L^2} \ge \delta.
\end{equation}
Let $\tilde{\chi}\in C^{\infty}_{c}(\R)$ be such that $\chi\tilde{\chi}=\chi$, then $\chi \tilde{w}$ satisfies the equation
\begin{equation}
\label{eq:Harmonic-Oscillator-E-gg-h-truncation}
\begin{split}
(-\h^{2}\partial_{z}^{2}+\tilde{\chi}V_{E})(\chi \tilde{w})
& = \chi \tilde{w} +o(\h)_{L^2} + [\h^{2}\partial_{z}^{2},\chi] \tilde{w}  \\
& = \chi \tilde{w} + o(\h)_{L^2} + 2\h^{2}\chi'\tilde{w}'+\h^{2}\chi''\tilde{w}
= \chi \tilde{w} + o(\h)_{L^2}.
\end{split}
\end{equation}
The bottom line comes from the the inhomogeneous Lithner-Agmon estimate and the fact that $\supp\chi'\cup\supp\chi''\subset\{|z|>1\}$. This equation first implies that $ \chi \tilde{w} $ is $ \h $-oscillating (see Example~\ref{example:h-oscillating}), whose $ \h $-semiclassical measure~$ \mu $ will thus not vanish, for we have~\eqref{eq:chi-tilde-w=1}. Now that $ \chi \tilde{w} $ is supported in $ \supp \chi $, we have evidently,
\begin{equation*}
\supp \mu \subset \supp \chi \times \R_{\zeta}.
\end{equation*}
By the fact that $ \tilde{\chi}(z) V_E(z) \to \tilde{\chi}(z) z^2 $ in~$ C_c^\infty(\R) $, and Corollary~\ref{cor:Semiclassical-Measure-Modified}, we see that
\begin{equation*}
\supp \mu \subset \{(z,\zeta) : \zeta^2 + \tilde{\chi}(z) z^2 = 1 \}.
\end{equation*}
Combing the results above, 
\begin{equation*}
\supp \mu \subset \supp \chi \times \R_{\zeta} \cap \{(z,\zeta) : \zeta^2 + \tilde{\chi}(z) z^2 = 1 \} \subset \{(z,\zeta) : \zeta^2 + z^2 = 1\},
\end{equation*}
because $ \tilde{\chi} = 1 $ on $ \supp \chi $. Moreover $ \mu $ is invariant by the Hamiltonian flow generated by the Hamiltonian vector field 
\begin{equation*}
H_{\zeta^2+\tilde{\chi}(z)z^2} = (-2\zeta,2\tilde{\chi}(z)z + \tilde{\chi}'(z)z^2),
\end{equation*}
which, when restricted to $ \supp \mu $, is $ (-2\zeta,2z) $, and generates the rotation of the circle $ \zeta^2 + z^2 = 1 $. Therefore~\eqref{eq:observability-E-gg-h-z} must be satisfied, because otherwise $ \mu|_{z>0} = 0 $, and by the invariance under flow, $ \mu = 0 $, which is a contradiction.
\end{proof}

\appendix

\section{Semiclassical Measure}

\label{sec:SDM}

In this section we recall some basic properties of semiclassical measures. For details we refer to~\cite{Gerard-Semiclassical-Measure,G-L,Lions-Wigner,Burq-Bourbaki,Zworski}. In what follows $ (M,g) $ will either be the flat Euclidean space $ \R^d $ or a compact Riemannian manifold without boundary. The theory of semiclassical measure works for general Riemannian manifolds, we strict ourselves to these simple cases so that we can give some simple proofs for some of the following results, which already satisfies our needs.

\begin{thm}
\label{thm:Semiclassical-Measure}
Let~$u(h)$ ($0<h<h_{0}$) be bounded in $L^{2}(M)$. Then there exists a sequence $h_{n}\to 0$ and a positive Radon measure~$\mu$ on~$T^{*}M$ (which is called an $ h $-semiclassical measure of $u(h)$) such that for all $a \in C_{c}^{\infty}(T^{*}M)$,
\begin{equation*}
\lim_{n\to\infty} (a(x,h_n D) u(h_n), u(h_n))_{L^{2}(M)} =\int_{T^{*}M} a(x,\xi)\,\d\mu(x,\xi).
\end{equation*}
\end{thm}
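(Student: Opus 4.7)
The plan is to construct $\mu$ as the limit of the family of functionals $\ell_h : C_c^\infty(T^*M) \to \C$ defined by
\begin{equation*}
\ell_h(a) = (a(x,hD) u(h), u(h))_{L^2(M)}.
\end{equation*}
I will work in the Euclidean case first, and then transport the result to a compact manifold via a partition of unity subordinate to a coordinate atlas (invariance modulo $O(h)$ of the principal symbol under change of variables guarantees the limit does not depend on the atlas). On $\R^d$, semiclassical pseudodifferential operators $a(x,hD)$ are defined by the usual oscillatory integral.

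First I would establish a uniform bound on $\ell_h$. By the semiclassical Calder\'on--Vaillancourt theorem, one has $\|a(x,hD)\|_{L^2 \to L^2} \le C_d \sum_{|\alpha|+|\beta|\le N_d} \|\partial_x^\alpha \partial_\xi^\beta a\|_{L^\infty}$, uniformly in $0 < h < h_0$, where $N_d$ depends only on the dimension. Consequently
\begin{equation*}
|\ell_h(a)| \le C_d \|u(h)\|_{L^2}^2 \sum_{|\alpha|+|\beta|\le N_d} \|\partial_x^\alpha \partial_\xi^\beta a\|_{L^\infty},
\end{equation*}
so that $\ell_h$ is bounded on a suitable Fr\'echet seminorm of $C_c^\infty(T^*M)$. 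Next, since $C_c^\infty(T^*M)$ is separable in its natural topology, a standard diagonal extraction produces a sequence $h_n \to 0$ and a functional $\ell$ on $C_c^\infty(T^*M)$ such that $\ell_{h_n}(a) \to \ell(a)$ for every $a \in C_c^\infty(T^*M)$. Combining with the Calder\'on--Vaillancourt estimate, one upgrades this convergence to $|\ell(a)| \le C \|a\|_{L^\infty}$ for $a \in C_c(T^*M)$ (approximate by smooth $a_\varepsilon$, use that $a(x,hD) - a_\varepsilon(x,hD)$ is controlled by a seminorm of $a - a_\varepsilon$, which can be made small by regularization while keeping $\|a_\varepsilon\|_\infty$ under control through a cutoff argument).

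The uniform $L^\infty$ bound now allows one to apply the Riesz--Markov representation theorem: there exists a complex Radon measure $\mu$ on $T^*M$ with $\ell(a) = \int_{T^*M} a\, \d\mu$. The only remaining ingredient is positivity of $\mu$. For this I would invoke the sharp G\r{a}rding inequality: if $a \in C_c^\infty(T^*M)$ satisfies $a \ge 0$, then
\begin{equation*}
(a(x,hD) v, v)_{L^2} \ge - C h \|v\|_{L^2}^2, \quad v \in L^2(M),
\end{equation*}
with $C$ depending on finitely many derivatives of $a$. Applied with $v = u(h_n)$ and letting $n \to \infty$, this yields $\ell(a) \ge 0$, so $\mu$ is a positive Radon measure.

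The main obstacle is not conceptual but technical: establishing the uniform operator bound and the sharp G\r{a}rding inequality with constants independent of $h$. Both are classical but nontrivial; the sharp G\r{a}rding step is where Weyl quantization is typically used (and where one checks that the difference $a^W(x,hD) - a(x,hD) = O(h)_{L^2 \to L^2}$ does not affect the limit). On a manifold, an additional subtlety is that quantization is only well defined modulo lower-order terms, so one must verify that the limiting measure is independent of the choice of charts and partition of unity, which follows from the fact that any two quantizations of the same $a \in C_c^\infty(T^*M)$ differ by an operator of norm $O(h)$.
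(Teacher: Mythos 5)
The paper itself does not prove this theorem: it is recalled as background in Appendix~A, with the proof deferred to the cited references (Gérard, Gérard--Leichtnam, Lions--Paul, Burq, Zworski). So there is no in-paper argument to compare against; your outline is precisely the standard proof from those references (e.g.\ Theorem~5.2 in Zworski's book): Calderón--Vaillancourt for the uniform operator bound, diagonal extraction over a countable dense family of symbols, Riesz--Markov representation, and sharp G\r{a}rding for positivity, with the manifold case reduced to charts using the $O(h)$ chart-independence of quantization. Structurally this is correct and complete at the level of a proof sketch.

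One step as written does not work, though it is easily repaired. You claim the bound $|\ell(a)| \le C\|a\|_{L^\infty}$ for merely continuous compactly supported $a$ by mollifying and controlling $a(x,hD) - a_\varepsilon(x,hD)$ through ``a seminorm of $a - a_\varepsilon$''. The Calderón--Vaillancourt seminorms involve derivatives up to order $N_d$, and mollification of a function that is only continuous makes $\|a - a_\varepsilon\|_{L^\infty}$ small but does not make $\|\partial^\alpha (a - a_\varepsilon)\|_{L^\infty}$ small for $|\alpha| \ge 1$ (these quantities are not even finite when $a \notin C^1$), so the operator difference is not controlled this way. The standard fix is to derive the sup-norm bound already on smooth symbols from the positivity: for real $a \in C_c^\infty(T^*M)$ with $\sup|a| = A$ and a cutoff $\chi \in C_c^\infty$ equal to $1$ on $\supp a$, apply sharp G\r{a}rding to the nonnegative symbols $(A+\varepsilon)\chi^2 \mp a$ and use $\chi(x,hD)^*\chi(x,hD) = \chi^2(x,hD) + O(h)_{L^2\to L^2}$ to conclude $|\ell(a)| \le A\,\limsup_{h}\|u(h)\|_{L^2}^2$; then $\ell$ extends by density in the sup norm to $C_0(T^*M)$ and Riesz--Markov applies. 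Alternatively, since sharp G\r{a}rding already shows $\ell$ is a positive linear functional on $C_c^\infty(T^*M)$, one may invoke the Riesz--Markov theorem for positive functionals directly after a routine density argument, bypassing the complex-measure version altogether. With either repair the argument is the standard one and is sound.
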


\begin{rem}
We call $ \mu $ an $ h $-semiclassical measure to emphasize the importance of the parameter~$ h $, for different parameter can be used. When there is no ambiguity, we simply call~$ \mu $ a semiclassical measure.
\end{rem}

\begin{rem}
\label{remark:pure}
When there is no need to extract a subsequence, we say~$u(h)$ is pure, and~$ \mu $ is ``the'' semiclassical measure of~$u(h)$. 
\end{rem}

We also need the following corollary.
\begin{cor}
\label{cor:Semiclassical-Measure-Modified}
Let~$u(h)$ ($0<h<h_{0}$) be pure, with semiclassical measure~$ \mu $. Suppose~$\{ a_{n}\}_{n}$ and~$a$ are functions in $C_{c}^{\infty}(T^{*}M)$ such that $a_{n}\to a$ in $C^{\infty}(T^{*}M)$, then
\begin{equation*}
\lim_{n\to\infty} (a(x,h_n D) u(h_{n}), u(h_{n}))_{L^{2}(M)} =\int_{T^{*}M} a(x,\xi)\,\d\mu.
\end{equation*}
\end{cor}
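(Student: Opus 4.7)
The plan is a standard triangle inequality argument reducing the statement to Theorem~\ref{thm:Semiclassical-Measure} together with a uniform-in-$h$ operator-norm estimate. (I read the left-hand side as $(a_n(x,h_n D) u(h_n), u(h_n))$, which makes the corollary genuinely stronger than the theorem; the plan would be trivial otherwise.) The first step is to split
\begin{equation*}
(a_n(x,h_n D) u(h_n), u(h_n))_{L^2} - \int_{T^*M} a\, \d\mu = T_1(n) + T_2(n),
\end{equation*}
where $T_1(n) = ((a_n - a)(x,h_n D) u(h_n), u(h_n))_{L^2}$ and $T_2(n) = (a(x,h_n D) u(h_n), u(h_n))_{L^2} - \int_{T^*M} a\,\d\mu$. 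The term $T_2(n) \to 0$ directly from Theorem~\ref{thm:Semiclassical-Measure} applied to the pure family $u(h)$ with the fixed test function $a \in C_c^\infty(T^*M)$, so no further work is needed there.

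For the term $T_1(n)$ I would estimate by Cauchy-Schwarz,
\begin{equation*}
|T_1(n)| \le \|(a_n - a)(x,h_n D)\|_{L^2 \to L^2}\, \|u(h_n)\|_{L^2}^2,
\end{equation*}
and use the fact that $\|u(h)\|_{L^2}$ is uniformly bounded in $h$ by hypothesis. It then suffices to establish the operator-norm convergence $\|(a_n - a)(x,h_n D)\|_{L^2 \to L^2} \to 0$. This is where the hypothesis $a_n \to a$ in $C_c^\infty(T^*M)$ enters essentially: by definition the supports of $\{a_n\}$ and $a$ lie in a common compact set $K \subset T^*M$, and for every multi-index $(\alpha,\beta)$, $\partial_x^\alpha \partial_\xi^\beta (a_n - a) \to 0$ uniformly on $K$.

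The key tool is the semiclassical Calderón-Vaillancourt theorem, which provides an integer $N \in \N$ and a constant $C > 0$ such that
\begin{equation*}
\|b(x,h D)\|_{L^2 \to L^2} \le C \sum_{|\alpha|+|\beta| \le N} \|\partial_x^\alpha \partial_\xi^\beta b\|_{L^\infty(T^*M)}
\end{equation*}
uniformly for $b \in C_c^\infty(T^*M)$ and $h \in (0, h_0]$. Applying this to $b = a_n - a$ yields the desired bound, whence $T_1(n) \to 0$ and the corollary follows. The main (minor) subtlety is that for $M = \Sigma$ compact, the quantization $a(x,hD)$ is defined via local charts and a partition of unity rather than through a global Fourier transform; however Calderón-Vaillancourt retains the same uniform-in-$h$ form in that setting, which is part of the standard semiclassical pseudodifferential calculus on compact manifolds, so the argument transfers verbatim.
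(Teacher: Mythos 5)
Your proposal is correct and is essentially the paper's own argument: the paper likewise reads the left-hand side as $a_n(x,h_nD)$, reduces to Theorem~\ref{thm:Semiclassical-Measure} for the fixed symbol $a$, and controls the difference via the uniform-in-$h$ Calder\'on--Vaillancourt bound $\|b(x,hD)\|_{L^{2}\to L^{2}}\le C\sup_{|\alpha|\le Nd}h^{|\alpha|/2}\|\partial^{\alpha}_{x,\xi}b\|_{L^{\infty}}$ applied to $b=a_n-a$, exactly as you do.
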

This corollary is a simple consequence of Theorem~\ref{thm:Semiclassical-Measure} and the following $ L^2 $-estimate (we refer to~\cite{Zworski}, Theorem~5.1) that, for some $ N > 0 $, and all $ a \in C_c^\infty(T^*M) $,
\begin{equation*}
\|{a(x,hD)}\|_{L^{2}\to L^{2}}
\le C\sup_{|\alpha|\le Nd}h^{|\alpha|/2}\|\partial_{x,\xi}^{\alpha}a\|_{L^{\infty}}.
\end{equation*}

\begin{thm}
\label{thm:prop-sm}
Let~$u(h)$ ($0<h<h_{0}$) be pure, with semiclassical measure~$ \mu $. Let $ p \in S^m(T^*M) $ (where~$ S^m(T^*M) $ is the H\"{o}rmander class, see~\cite{Hormander}).
\begin{align*}
p(x,hD) u(h)  = o(1)_{L^2} \quad  & \Rightarrow  \quad  \supp \mu  \subset \{p = 0\} 
\\
p(x,hD) u(h)  = o(h)_{L^2} \quad  & \Rightarrow \quad  H_p \mu  = 0,
\end{align*}
where $ H_p = (-\frac{\partial H}{\partial_\xi} \frac{\partial}{\partial_x},\frac{\partial H}{\partial_x} \frac{\partial}{\partial_\xi}) $.
\end{thm}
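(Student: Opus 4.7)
The plan is to establish both assertions by the semiclassical symbolic calculus, combined with the defining property of~$\mu$ as the weak-$\ast$ limit of the Wigner functional $a \mapsto (a(x,hD) u(h), u(h))$. Both statements are local on~$T^*M$, so after a partition of unity the argument reduces to the Euclidean model, where the full pseudodifferential calculus is at hand. Throughout I would work with a Weyl-type quantization, for which real-valued symbols give essentially self-adjoint operators modulo~$O(h^2)$; this is harmless because~$\mu$ depends only on the principal symbol of the test operators.

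For the first statement I would argue by a division trick. Pick an arbitrary $a \in C_c^\infty(T^*M)$ with $\supp a \cap \{p = 0\} = \emptyset$. Since $p$ is continuous and bounded away from zero on the compact set $\supp a$, the function $b := a/p$, extended by a smooth cut-off equal to~$1$ on $\supp a$, belongs to $C_c^\infty(T^*M)$. The composition formula of the symbolic calculus then gives
\begin{equation*}
a(x,hD) = b(x,hD)\, p(x,hD) + h\, R(h),
\end{equation*}
with $R(h)$ bounded on~$L^2$ uniformly in~$h$. Pairing with $u(h)$ and using $p(x,hD) u(h) = o(1)_{L^2}$ together with the $L^2$-boundedness of $b(x,hD)^*$,
\begin{equation*}
(a(x,hD) u(h), u(h)) = (p(x,hD) u(h), b(x,hD)^* u(h)) + O(h) = o(1)\cdot O(1) + O(h) = o(1).
\end{equation*}
Letting $h \to 0$ yields $\int a\,\d\mu = 0$; since $a$ is an arbitrary test function supported in $\{p \neq 0\}$, this means $\supp\mu \subset \{p=0\}$.

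For the second statement the key input is the commutator identity. For any $a \in C_c^\infty(T^*M)$, the symbolic product formula gives
\begin{equation*}
\frac{i}{h}\bigl[p(x,hD),\, a(x,hD)\bigr] = \{p,a\}(x,hD) + O(h)_{L^2 \to L^2} = (H_p a)(x,hD) + O(h)_{L^2 \to L^2},
\end{equation*}
so that $\bigl(\tfrac{i}{h}[p(x,hD),a(x,hD)]\, u(h), u(h)\bigr) \to \int_{T^*M} H_p a\, \d\mu$ as $h \to 0$. On the other hand, expanding the same commutator by moving operators onto the right entry,
\begin{equation*}
\bigl([p(x,hD), a(x,hD)]\, u(h), u(h)\bigr) = \bigl(a(x,hD) u(h),\, p(x,hD)^* u(h)\bigr) - \bigl(p(x,hD) u(h),\, a(x,hD)^* u(h)\bigr).
\end{equation*}
With the Weyl quantization and $p$ real (which is the case of interest, since $p$ arises as the principal symbol $g^{-1}_x(\xi,\xi)-1$), $p(x,hD)^*$ agrees with $p(x,hD)$ modulo an $L^2$-bounded error of order~$h$. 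Using $p(x,hD) u(h) = o(h)_{L^2}$ and the uniform $L^2$-boundedness of $a(x,hD)^{(*)} u(h)$, both scalar products on the right-hand side are of size~$o(h)$. Dividing by~$h$, the limit of the right-hand side is~$0$, and comparing with the principal-symbol limit obtained above yields $\int H_p a\,\d\mu = 0$ for every $a \in C_c^\infty(T^*M)$, i.e.\ $H_p\mu = 0$ in the distributional sense.

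The main subtlety is this self-adjointness book-keeping: with the standard left quantization the difference $p(x,hD)^* - p(x,hD)$ is only $O(h)$ in operator norm, so the direct commutator estimate would yield~$O(h)$ rather than~$o(h)$, insufficient after division by~$h$. Passing to Weyl (or any quantization symmetric up to $O(h^2)$) makes the subprincipal terms cancel and restores the $o(h)$ bound, at which point the argument closes. Apart from this point, everything is routine semiclassical calculus.
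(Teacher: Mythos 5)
The paper states Theorem~\ref{thm:prop-sm} as a recalled fact and gives no proof, referring instead to the cited literature, so there is nothing internal to compare your argument against; I will assess it on its own. Your proof of the first implication (division by $p$ away from its zero set, composition formula, Calder\'on--Vaillancourt) is the standard argument and is correct. Your proof of the second implication is also the standard commutator argument, and you correctly identify the one genuinely delicate point: the difference $p(x,hD)^*-p(x,hD)$ must be $O(h^2)$, not merely $O(h)$, for the division by $h$ to close. (Your middle paragraph says "modulo an $L^2$-bounded error of order $h$", which would \emph{not} suffice and contradicts your own final paragraph; for the Weyl quantization of a real symbol the adjoint identity is in fact exact.)

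The remaining gap is that your fix --- "pass to Weyl, which is harmless because $\mu$ depends only on the principal symbol" --- does not actually repair the statement as written. Changing the quantization of the \emph{test} symbols $a$ is indeed harmless, but changing the quantization of $p$ in the \emph{hypothesis} is not: $p^w(x,hD)u(h)=p(x,hD)u(h)+O(h)\|u(h)\|_{L^2}$, so the assumption $p(x,hD)u(h)=o(h)_{L^2}$ for the left quantization only yields $p^w(x,hD)u(h)=O(h)_{L^2}$, which is exactly what your argument cannot absorb. In other words, the second implication is quantization-sensitive at precisely the order at stake, and for a general left-quantized $p\in S^m$ the conclusion $H_p\mu=0$ acquires an extra term coming from the (non-invariant) subprincipal symbol. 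The correct reading of the theorem --- and the form in which it appears in the references --- assumes $P(h)$ self-adjoint (or Weyl-quantized with real full symbol) with real principal symbol $p$. Under that hypothesis your argument is complete, and it covers the only use made of the theorem in the paper, namely $P=-h^2\Delta-1$, which is self-adjoint. You should state this self-adjointness hypothesis explicitly rather than claiming the choice of quantization is immaterial.
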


\begin{rem}
By consequence of Corollary~\ref{cor:Semiclassical-Measure-Modified}, Theorem~\ref{thm:prop-sm} could be improved a little bit. Instead of a single symbol~$ p $, we can consider a sequence of symbols~$ \{p_n\}_{n\ge 0} \subset S^m(T^*M) $, such that $ p_n \to p $ in $ C^\infty_{\loc}(M) $. Then
\begin{align*}
p_n(x,h_nD) u(h_n)  = o(1)_{L^2} \quad  & \Rightarrow  \quad  \supp \mu  \subset \{p = 0\} 
\\
p_n(x,h_nD) u(h_n)  = o(h_n)_{L^2} \quad  & \Rightarrow \quad  H_p \mu  = 0.
\end{align*}
\end{rem}

For a pure sequence~$ u(h) $, even if its semiclassical measure $ \mu = 0 $, we do not generally have $ u(h) \to 0 $ strongly in~$ L^2(M) $. However, this is the case if~$ u(h) $ is in addition $ h $-oscillating.

\begin{defn}
A sequence~$ u(h) $ is called $ h $-oscillating if for some $ \chi \in C^\infty(\R) $ such that $ 0 \le \chi \le 1 $, $ \chi = 0 $ in a neighborhood of the origin, and $ \chi = 1 $ outside a neighborhood of the origin, then
\begin{equation*}
\lim_{R\to+\infty} \limsup_{h\to 0} \| \chi(-h^2\Delta/R) u(h) \|_{L^2} = 0.
\end{equation*}
\end{defn}

\begin{prop}
\label{prop:h-oscillating-imply-Ltwo-convergence}
Let $ u(h) $ be a pure and $ h $-oscillating sequence with vanishing semiclassical measure, then $ u(h) \to 0 $ strongly in $ L^2(M) $.
\end{prop}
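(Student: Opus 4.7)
The plan is to split $u(h)$ into a moderate-frequency part, controlled by the vanishing semiclassical measure, and a high-frequency part, controlled by the $h$-oscillation hypothesis. Fix a cutoff $\chi \in C^\infty(\R)$ as in the definition of $h$-oscillating, and for each $R > 0$ write
\begin{equation*}
u(h) = \bigl(1-\chi(-h^2\Delta/R)\bigr)\, u(h) + \chi(-h^2\Delta/R)\, u(h).
\end{equation*}
By the triangle inequality and the $h$-oscillation hypothesis, it suffices to prove that, for each fixed $R > 0$,
\begin{equation*}
\lim_{h \to 0} \bigl\|\bigl(1-\chi(-h^2\Delta/R)\bigr)\, u(h)\bigr\|_{L^2(M)} = 0,
\end{equation*}
and then let $R \to +\infty$.

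To establish this, set $\psi_R(t) = 1 - \chi(t/R)$. Since $\chi$ equals~$1$ outside a neighborhood of the origin and is smooth, the function $\psi_R$ is smooth, bounded, and compactly supported in $\R$. By semiclassical functional calculus (via the Helffer--Sjöstrand almost-analytic extension formula, as in~\cite{Zworski}), $\psi_R(-h^2\Delta)$ is a semiclassical pseudodifferential operator with principal symbol $\psi_R(|\xi|_g^2) \in C^\infty_c(T^*M)$, modulo an operator-norm error of order $O(h)$. Applying Theorem~\ref{thm:Semiclassical-Measure} with the test symbol $\psi_R(|\xi|_g^2)^2$ yields, for each fixed $R > 0$,
\begin{equation*}
\lim_{h \to 0} \bigl\|\bigl(1-\chi(-h^2\Delta/R)\bigr)\, u(h)\bigr\|_{L^2(M)}^{2}
= \int_{T^*M} \psi_R(|\xi|_g^2)^2\, \d\mu = 0,
\end{equation*}
since $\mu \equiv 0$ by hypothesis. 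Combined with the $h$-oscillation limit, this closes the argument.

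The main obstacle is the functional-calculus identification $\psi_R(-h^2\Delta) = a_R(x,hD) + O_{L^2 \to L^2}(h)$ with $a_R(x,\xi) = \psi_R(|\xi|_g^2) \in C^\infty_c(T^*M)$. On $\R^d$ this is immediate from the Fourier-multiplier representation of $\psi_R(-h^2\Delta)$; on a compact Riemannian manifold, it follows from the Helffer--Sjöstrand formula applied to a smooth, compactly supported function of the self-adjoint elliptic operator $-h^2\Delta$. Once this identification is in hand, the rest of the proof is a routine combination of the two limits described above.
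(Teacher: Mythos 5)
Your proposal is correct and follows essentially the same route as the paper: split $u(h)$ by the spectral cutoff $\chi(-h^2\Delta/R)$, identify the low-frequency part $1-\chi(-h^2\Delta/R)$ as a semiclassical pseudodifferential operator with compactly supported principal symbol via functional calculus so that the vanishing measure kills it, and use the $h$-oscillation hypothesis to kill the high-frequency part as $R\to\infty$. The only cosmetic difference is that the paper splits $\|u(h)\|_{L^2}^2$ directly into two inner products with the symbols $1-\chi_R^2$ and $\chi_R^2$, whereas you split $u(h)$ itself and use the triangle inequality; both are valid.
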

\begin{proof}
Let $ \chi $ be chose as in the definition above. Write $ \chi_R(\cdot) = \chi(\cdot/R) $ for simplicity, and decompose
\begin{equation*}
\|u(h)\|_{L^2}^2 = \big((1-\chi^2_R(-h^2\Delta)) u(h),u(h)\big)_{L^2} + \big(\chi^2_R(-h^2\Delta) u(h),u(h)\big)_{L^2}.
\end{equation*}
Observe that $ (1-\chi^2_R(-h^2\Delta)) $ is a semiclassical pseudodifferential operator with principal symbol $ 1-\chi^2_R(g^{-1}_x(\xi,\xi)) \in C_c^\infty(T^*M) $, therefore, since $ \mu = 0 $,
\begin{equation*}
\lim_{h\to 0} \big((1-\chi^2_R(-h^2\Delta)) u(h),u(h)\big)_{L^2} = \int_{T^{*}M} \Big( 1-\chi^2_R(g^{-1}_x(\xi,\xi)) \Big) \d\mu(x,\xi) = 0.
\end{equation*}
While for the second term, by our hypothesis of $ h $-oscillation,
\begin{equation*}
\lim_{R\to+\infty}\limsup_{h\to 0} \big(\chi^2_R(-h^2\Delta) u(h),u(h)\big)_{L^2} 
= \lim_{R\to+\infty}\limsup_{h\to 0} \|\chi_R(-h^2\Delta) u(h)\|_{L^2}^2
= 0.
\end{equation*}
Combine these two limit behaviors,
\begin{equation*}
\limsup_{h\to 0} \|u(h)\|_{L^2}^2 = \lim_{R\to+\infty}\limsup_{h\to 0} \|u(h)\|_{L^2}^2 = 0.
\end{equation*}
\end{proof}

\begin{ex}
\label{example:h-oscillating}
Suppose that~$ u(h) $ is a pure sequence satisfying
\begin{equation*}
(-h^2\Delta + V) u(h) = o(1)_{L^2},
\end{equation*}
where $ V \in C_c^\infty(M) $ then $ u(h) $ is $ h $-oscillating.
Indeed, by adding to~$ V $ some constant, we may assume that $ V \ge 1 $. So that $ -h^2\Delta + V $ is a self-adjoint operator with uniformly (in~$ h $) bounded resolvent 
$ \|(-h^2\Delta + V)^{-1}\|_{L^2\to L^2} \le 1 $. After this modification~$ u(h) $ satisfies an equation of the form
\begin{equation*}
(-h^2\Delta + V) u(h) = E u(h) + o(1)_{L^2},
\end{equation*}
for some constant $ E \in \R $. Denote $ \psi(z) = z^{-1} \chi(z) \in C_c^\infty(\R) $, and $ \psi_R(z) = \psi(z/R) $, then apply each side of the equation above by $ R^{-1} \psi_R(-h^2\Delta + V) $, we obtain
\begin{equation*}
\chi_R(-h^2\Delta + V) u(h) = E R^{-1} \psi_R(-h^2\Delta + V) u(h) + o(1)_{L^2}.
\end{equation*}
Now that~$ M $ is either compact or Euclidean, we have a uniform elliptic estimate $ g^{-1}_x(\xi,\xi) \gtrsim |\xi|^2 $, whence for~$ R $ sufficiently large and for $ (x,\xi) \in T^*M $,
\begin{equation*}
\chi_{R/2}(g^{-1}_x(\xi,\xi)) \chi_R(g^{-1}_x(\xi,\xi) + V(x)) = \chi_{R/2}(g^{-1}_x(\xi,\xi)).
\end{equation*}
Therefore, apply the equation above by $ \chi_{R/2}(-h^2\Delta) $, and by a symbolic calculus, we have
\begin{equation*}
\chi_{R/2}(-h^2\Delta) u(h) = E R^{-1} O(1)_{L^2} + o(1)_{L^2},
\end{equation*}
which implies that, for~$ R $ sufficiently large,
\begin{equation*}
\limsup_{h\to 0} \|\chi_{R/2}(-h^2\Delta) u(h)\|_{L^2} = O(R^{-1}).
\end{equation*}
We also remark that this argument works when~$ V $ depends on~$ h $, but remains bounded in~$ C_c^\infty(M) $.
\end{ex}

\vspace{\baselineskip}
\parbox{5in}{
HUI ZHU

\textsc{Laboratoire de Mathématiques d’Orsay}

\textsc{Université Paris-Sud, CNRS, Université Paris-Saclay}

\textsc{91405 Orsay, France}

\textit{E-mail}: \texttt{hui.zhu@math.u-psud.fr}
}

\end{document}